\spnewtheorem{observation}[theorem]{Observation}{\bfseries}{\itshape}
\spnewtheorem{nothing}[theorem]{}{\bfseries}{\itshape}
\spnewtheorem{nothing*}[theorem]{}{\bfseries}{\rmfamily}
\spnewtheorem{notation}[theorem]{Notation}{\bfseries}{\rmfamily}
\spnewtheorem{caution}[theorem]{Caution}{\bfseries}{\rmfamily}
\spnewtheorem{Definition}[theorem]{Definition}{\bfseries}{\rmfamily}
\spnewtheorem{Example}[theorem]{Example}{\bfseries}{\rmfamily}
\spnewtheorem*{Example*}{Example}{\bfseries}{\rmfamily}
\spnewtheorem{Remark}[theorem]{Remark}{\bfseries}{\rmfamily}
\spnewtheorem{Question}{Question}{\bfseries}{\rmfamily}
\DeclareMathOperator{\Aut}{Aut}
\DeclareMathOperator{\image}{im}
\DeclareMathOperator{\trdeg}{trdeg}
\DeclareMathOperator{\Char}{char}
\DeclareMathOperator{\Span}{Span}
\DeclareMathOperator{\id}{id}
\DeclareMathOperator{\Der}{Der}
\DeclareMathOperator{\Nil}{Nil}
\DeclareMathOperator{\UNil}{UNil}
\DeclareMathOperator{\lnd}{LND}
\DeclareMathOperator{\locnil}{LN}
\DeclareMathOperator{\ad}{ad}
\newcommand{\powerset}{	\raisebox{0.7\depth}{\large $\wp$} }
\newcommand{\smallpowerset}{	\raisebox{0.7\depth}{\small $\wp$} }
\newcommand{\powerfin}{	\powerset_{\!\mbox{\rm\tiny fin}}}
\newcommand{\setspec}[2]{\big\{\,#1\, \mid \,#2\, \big\}}
\newcommand{\Ascr}{\mathscr{A}}
\newcommand{\II}{\ensuremath{\mathbb{I}}}
\newcommand{\LL}{\text{\rm L}}
\newcommand{\Nat}{\mathbb{N}}
\newcommand{\bk}{\mathbf{k}}
\newcommand{\Leul}{\EuScript{L}}
\newcommand{\Veul}{\EuScript{V}}
\newcommand{\isom}{\cong}
\renewcommand{\epsilon}{\varepsilon}
\renewcommand{\phi}{\varphi}
\renewcommand{\emptyset}{\varnothing}
\newenvironment{enumerata}%
{\begin{enumerate}[{label={\normalfont(\alph*)}}]}
{\end{enumerate}}
\begin{document}
\mainmatter         
\title{Locally nilpotent sets of derivations}
\titlerunning{Locally nilpotent sets of derivations}
\author{Daniel Daigle\thanks{Research supported by grant RGPIN/2015-04539 from NSERC Canada.}} 
\authorrunning{Daniel Daigle} 
\tocauthor{Daniel Daigle}
\institute{Department of Mathematics and Statistics \\ University of Ottawa, Ottawa (ON), Canada K1N 6N5 \\
\email{ddaigle@uottawa.ca}}

\maketitle           

{\renewcommand{\thefootnote}{}
\footnotetext{2010 \textit{Mathematics Subject Classification.}
Primary: 14R20, 13N15. Secondary: 17B30, 17B65, 17B66.}}

\begin{abstract}
Let $B$ be an algebra over a field $\bk$.
We define what it means for a subset of $\Der_\bk(B)$ to be a {\it locally nilpotent set}.
We prove some basic results about that notion and explore the following questions.
Let $L$ be a Lie subalgebra of $\Der_\bk(B)$;
if $L \subseteq \lnd(B)$ then does it follow that $L$ is a locally nilpotent set? Does it follow that $L$ is a nilpotent Lie algebra?
\keywords{Locally nilpotent derivation, nilpotent Lie algebra.}
\end{abstract}


In this article, an {\it algebra} over a field $\bk$ is a pair $(B,\cdot)$ where $B$ is a $\bk$-vector space and ``$\cdot$'' is
an arbitrary $\bk$-bilinear map $B \times B \to B$, $(x,y) \mapsto x \cdot y$.
Let $B$ be an algebra over a field $\bk$ and let $\Der_\bk(B)$ be the set of all $\bk$-derivations $D : B \to B$.
A derivation $D \in \Der_\bk(B)$ is said to be \textit{locally nilpotent} if for each $x \in B$ there exists an $n>0$ such that $D^n(x)=0$.
We write $\lnd(B)$ for the set of locally nilpotent derivations of $B$.
We say that a subset $\Delta$ of $\Der_\bk(B)$ is \textit{locally nilpotent} if for each $x \in B$ the following holds:
\begin{quote}
for every infinite sequence $(D_1, D_2, \dots)$ of elements of $\Delta$, there exists $n$\\
such that $(D_n \circ \cdots \circ D_1)(x)=0$.
\end{quote}
We say that $\Delta$ is \textit{uniformly locally nilpotent} if for each $x \in B$ there exists $n$ such that
$(D_n \circ \cdots \circ D_1)(x)=0$ for all $(D_1,\dots,D_n) \in \Delta^n$.
Then it is clear that the implications 
$$
\text{$\Delta$ is uniformly locally nilpotent} \, \Rightarrow \,
\text{$\Delta$ is locally nilpotent} \, \Rightarrow \,
\Delta \subseteq \lnd(B)
$$
are true, and it is easy to see that both converses are false.
Since $\Delta \subseteq \lnd(B)$ does not imply that $\Delta$ is locally nilpotent, it is natural to ask whether the stronger condition
 $\Span_\bk(\Delta) \subseteq \lnd(B)$ would imply that $\Delta$ is locally nilpotent;
it turns out that the answer is negative:

\begin{Example*} 
Let $\bk$ be a field and $B = \bk[x,y,z]$ (polynomial ring in $3$ variables).
Let $\Delta = \{D,E\}$ where $D = x \frac{\partial}{\partial y} + y \frac{\partial}{\partial z}$ and
$E = y \frac{\partial}{\partial x} - z \frac{\partial}{\partial y}$.
Then $\Span_\bk(\Delta) \subseteq \lnd(B)$. However, we have $D(E(x))=x$, so $\Delta$ is not a locally nilpotent set.
\end{Example*}

In the above Example
the derivation $[D,E]= D \circ E - E \circ D$ is not locally nilpotent (it sends $x$ to itself),
so the Lie algebra generated by $\Delta$ is not included in $\lnd(B)$.
So it makes sense to ask: if we make the stronger assumption that the Lie algebra generated by $\Delta$ is included in $\lnd(B)$,
then does it follow that $\Delta$ is locally nilpotent?
This is equivalent to asking:
\begin{Question}
Let $L$ be a Lie subalgebra of $\Der_\bk(B)$.
If $L \subseteq \lnd(B)$, does it follow that $L$ is a locally nilpotent subset of  $\Der_\bk(B)$?
\end{Question}
This question leads naturally to another question: can we characterize the Lie algebras that can be embedded in $\lnd(B)$?
This question may be too ambitious, so we shall restrict ourselves to the following version of it:

\begin{Question}
Let $L$ be a Lie subalgebra of $\Der_\bk(B)$.
If $L$ satisfies one of:\\
$\bullet$  $L \subseteq \lnd(B)$,\\
$\bullet$  $L$ is a locally nilpotent (or uniformly locally nilpotent) subset of $\Der_\bk(B)$,\\
$\bullet$ $L$ is a Lie-locally nilpotent (or uniformly so) subset of $\Der_\bk(B)$,\\
then does it follow that $L$ is a nilpotent Lie algebra, or that $L$ satisfies some other nilpotency condition that makes sense
for abstract Lie algebras?
(The notion of a Lie-locally nilpotent subset of $\Der_\bk(B)$ is defined in Sec.\ \ref{SEC:LieLocallynilpotentsetsoflinearmaps}.)
\end{Question}

The aim of this article is to develop the basic theory of locally nilpotent sets and to explore what can be said about Questions 1 and 2
(where in fact Question 2 contains several questions).
The basic theory is developed in Section \ref{SEC:Locallynilpotentactions}; the first steps are done in the more general context of linear endomorphisms of
vector spaces, and the later part of the Section is devoted to derivations.
Section \ref{SEC:LieLocallynilpotentsetsoflinearmaps} deals with a variant of the notion of locally nilpotent set
that is relevant in the context of Lie algebras of derivations.
Section \ref{SEC:Someapplicationstoalgebras} studies five notions of nilpotency for Lie or associative algebras,
needed in order to address Question 2.
Three of them are classical (an algebra can be \textit{nilpotent}, \textit{nil}, or \textit{locally nilpotent}).
The other two are those of a \textit{sequentially nilpotent} algebra and of a \textit{locally nil} algebra;
as far as we know these two have not been studied previously. 
Sections \ref{SEC:Localnilpotenceanduniformlocalnilpotenceforderivations} and \ref{SEC:thecaseofderivationfinitealgebras} are mainly devoted
to Questions 1 and 2.
Section \ref{SEC:Localnilpotenceanduniformlocalnilpotenceforderivations} shows that if $B$ is the polynomial ring in $| \Nat |$ variables
over a field $\bk$ then the following hold:
\begin{enumerate}[{label={\normalfont(\arabic*)}}]

\item (Ex.\ \ref{298ehr0c9vvb349t9pwqnfvw9e8}) There exists a Lie subalgebra $L$ of $\Der_\bk(B)$ such that
\begin{enumerate}

\item every finitely generated Lie subalgebra of $L$ is a  locally nilpotent subset of $\Der_\bk(B)$ (so $L \subseteq \lnd(B)$);

\item $L$ is not a locally nilpotent subset of $\Der_\bk(B)$.

\end{enumerate}
So Question 1 has a negative answer.

\item (Ex.\ \ref{cp0b3497m5yokejdrjmk6yf}) For each integer $m\ge2$ there exists an $m$-generated Lie subalgebra $L$ of $\Der_\bk(B)$ satisfying:
\begin{enumerate}

\item every $(m-1)$-generated Lie subalgebra of $L$ is a locally nilpotent subset of $\Der_\bk(B)$  (so $L \subseteq \lnd(B)$);

\item $L$ is not a locally nilpotent subset of $\Der_\bk(B)$.

\end{enumerate}
So Question 1 has a negative answer even when $L$ is a finitely generated Lie algebra.

\item (Ex.\ \ref{2ndPart-Zf24obo87vWLkhkjDTfo499476df}) There exists a Lie subalgebra $L$ of $\Der_\bk(B)$ such that
\begin{enumerate}

\item $L$ is a uniformly locally nilpotent subset of $\Der_\bk(B)$;

\item $L$ is the free Lie algebra on a countably infinite set (so $L$ is as non-nilpotent as a Lie algebra can be).

\end{enumerate}
So all questions that are part of Question 2 have negative answers.

\item (Cor.\ \ref{p0c9vn349Cf0cpe0a})  There exists an infinite subset $\Delta$ of $\Der_\bk(B)$ satisfying:
\begin{enumerate}

\item $\Delta$ is not a locally nilpotent subset of $\Der_\bk(B)$;

\item every finite subset of $\Delta$ is a locally nilpotent subset of $\Der_\bk(B)$.

\end{enumerate}

\item (Cor.\ \ref{p0c9vn349Cf0cpe0a}) For each integer $m\ge2$, there exists an $m$-subset $\Delta$ of $\Der_\bk(B)$ satisfying:
\begin{enumerate}

\item $\Delta$ is not a locally nilpotent subset of $\Der_\bk(B)$;

\item every proper subset of $\Delta$ is a locally nilpotent subset of $\Der_\bk(B)$.

\end{enumerate}
\end{enumerate}
Facts (4) and (5) show just how capriciously the notion of locally nilpotent set can behave.
Note that (2) and (5) are based on Golod's famous example 
of an associative algebra that is nil and finitely generated but not nilpotent.

The above facts suggest that if we do not assume that $B$ satisfies some finiteness condition
then we cannot expect our questions to have affirmative answers.
Section \ref{SEC:thecaseofderivationfinitealgebras} re-examines Question 2 under
the additional assumption that $B$ is ``derivation-finite''
(this is more general than $B$ being finitely generated, see Def.\ \ref{c9fweCdefderfinitedc9v9ueibqovp}).
Cor.\ \ref{cijvnn394npebg67872u} and Prop.\ \ref{p90cb2473dq739e3r627} give some positive answers to Question 2, but Question 1 is left open.

\medskip
In future work, we hope to apply these ideas to the following questions.
Let $B$ be a commutative algebra over a field $\bk$ of characteristic zero, let $\Delta$ be a subset of $\lnd(B)$ and consider
the subset $Y = \setspec{ \exp(D) }{ D \in \Delta }$ of $\Aut_\bk(B)$.
{\it How are the properties of $\Delta$ related to those of $Y$? When is $Y$ a group? When is $Y$ included in an algebraic group?}

\section{Preliminaries}

\begin{nothing*}  \label {09cn3049dEw}
\textbf{Directed trees.}
Consider a pair $(S,E)$ where $S$ is a set and $E$ is a subset of $S \times S$ such that every element $(u,v)$ of $E$ satisfies $u \neq v$.
Then $(S,E)$ can be viewed as a directed graph where $S$ is the vertex-set and $E$ is the edge-set
(if $(u,v) \in E$ then $(u,v)$ is a directed edge from $u$ to $v$).
We allow $S$ and $E$ to be infinite sets.
By a \textit{path} in the graph $(S,E)$,
we mean a sequence $\gamma = (x_0, x_1, x_2, \dots )$ (finite or infinite) of elements of $S$
such that $(x_i,x_{i+1}) \in E$ for all $i \ge0$.
If $\gamma = (x_0, x_1, \dots, x_n)$ is a finite path, we say that $\gamma$ is a path {\it from $x_0$ to $x_n$.}
If $x \in S$ then we regard $(x)$ as a path from $x$ to $x$.

Now suppose that $(S,E)$ is a pair of the above type. Then it is not hard to see that there exists at most one vertex $v_0 \in S$ satisfying:
for every $v \in S$ there exists exactly one path from $v_0$ to $v$.
If such a vertex $v_0$ exists then we say that \textit{$(S,E)$ is a directed tree with root $v_0$}.
We shall use the following fact several times; its proof is left to the reader.
\end{nothing*}  

\begin{theorem}  \label {0veohtg09ernd0}
Let $(S,E)$ be a directed tree with root $v_0$, and suppose:
\begin{itemize}

\item for each $v \in S$, there exist finitely many $v' \in S$ such that $(v,v') \in E$;

\item there exists no infinite path $(x_0,x_1,x_2,\dots)$ in $(S,E)$ satisfying $x_0 = v_0$.

\end{itemize}
Then $S$ is a finite set.
\end{theorem}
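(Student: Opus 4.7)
The plan is to argue by contrapositive: assume $S$ is infinite and construct an infinite directed path starting at $v_0$, contradicting the second hypothesis. This is essentially König's lemma adapted to the directed tree setting.

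First I would associate to each vertex $v \in S$ its \emph{descendant set} $T_v = \{\, u \in S : \text{there is a path from } v \text{ to } u \,\}$. The key structural observation is the recursive identity
\[
T_v = \{v\} \cup \bigcup_{(v,v')\in E} T_{v'}.
\]
The inclusion ``$\supseteq$'' is immediate from the definition of a path. For ``$\subseteq$'', given $u \in T_v \setminus \{v\}$, any path $(v = y_0, y_1, \dots, y_n = u)$ has $n \ge 1$, so $(v, y_1) \in E$ and $(y_1, \dots, y_n)$ witnesses $u \in T_{y_1}$. Moreover $T_{v_0} = S$: by definition of a directed tree with root $v_0$, every $v \in S$ is the endpoint of a (unique) path from $v_0$.

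Now suppose for contradiction that $S$ is infinite. I would build inductively a sequence $(x_0, x_1, x_2, \dots)$ with $x_0 = v_0$, with $(x_i, x_{i+1}) \in E$ for every $i \ge 0$, and with $T_{x_i}$ infinite for every $i \ge 0$. The base case is $T_{v_0} = S$, which is infinite by assumption. For the inductive step, suppose $T_{x_i}$ is infinite. By the first hypothesis the out-neighbour set $N(x_i) = \{\, v' \in S : (x_i,v') \in E \,\}$ is finite, so the displayed identity expresses $T_{x_i}$ as a finite union $\{x_i\} \cup \bigcup_{v' \in N(x_i)} T_{v'}$; since $T_{x_i}$ is infinite, at least one $T_{v'}$ with $v' \in N(x_i)$ must be infinite, and we take $x_{i+1}$ to be such a $v'$. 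The resulting sequence is an infinite path in $(S,E)$ starting at $v_0$, contradicting the second hypothesis. Hence $S$ is finite.

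The main obstacle is conceptual rather than technical: one needs to notice that finite out-degree plus uniqueness of root-paths is exactly what makes the classical König pigeonhole argument go through. The subtree identity and the recursive choice of $x_{i+1}$ are the only ingredients, and no nontrivial computation is required.
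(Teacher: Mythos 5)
Your argument is correct: the subtree identity $T_v = \{v\} \cup \bigcup_{(v,v')\in E} T_{v'}$ holds for the reasons you give, $T_{v_0}=S$ follows from the definition of the root, and the finite-out-degree pigeonhole step produces the forbidden infinite path. The paper explicitly leaves the proof of this theorem to the reader, and what you have written is exactly the standard K\"onig's lemma argument that is intended there, so nothing further is needed.
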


\begin{nothing*}  \label {0bg93mm5k6gnndrtuiecj8}
\textbf{Algebras.}
Let $\bk$ be a field.

\smallskip

\noindent (a)\ \  A {\it $\bk$-algebra} is a pair $(A,\cdot)$ where $A$ is a $\bk$-vector space and ``$\cdot$'' is
an arbitrary $\bk$-bilinear map $A \times A \to A$, $(x,y) \mapsto x \cdot y$.
We call $\cdot$ the multiplication of $A$.

Let $A$ be a $\bk$-algebra.

A {\it subalgebra} of $A$ is a $\bk$-subspace $A'$ of $A$ closed under the multiplication of $A$.
If $\Delta$ is a subset of $A$ then the {\it subalgebra of $A$ generated by $\Delta$} is the intersection of all subalgebras $A'$ of $A$
satisfying $\Delta \subseteq A'$.

A {\it $\bk$-derivation} of $A$ is a $\bk$-linear map $D : A \to A$ satisfying
$D(x \cdot y) = D(x) \cdot y + x \cdot D(y)$ for all $x,y \in A$. 

A {\it homomorphism} $\phi : A \to B$  of $\bk$-algebras is a $\bk$-linear map satisfying $\phi(x \cdot y) = \phi(x) \cdot \phi(y)$
for all $x,y \in A$.

\smallskip

\noindent (b)\ \ An \textit{associative algebra} over $\bk$ is a $\bk$-algebra whose multiplication is associative.

\smallskip

\noindent (c)\ \ A {\it Lie algebra} over $\bk$ is a $\bk$-algebra $(A,\cdot)$ 
satisfying $x \cdot x = 0$ and the Jacobi identity $x \cdot (y \cdot z) + y \cdot (z \cdot x) + z \cdot (x \cdot y) = 0$
for all $x,y,z \in A$.

\smallskip

For associative algebras and Lie algebras,
the notions of {\it subalgebra}, {\it subalgebra generated by a set}, {\it homomorphism} and {\it $\bk$-derivation}
are those defined in part (a) for general algebras.
\end{nothing*}

\begin{notation}  \label {p0vQ398vf83ms43rt7uxcy9w47F}
\noindent (a)\ \ If $(A,\cdot)$ is an associative algebra and if we define $a_1 * a_2 = a_1 \cdot a_2 - a_2 \cdot a_1$  for all $a_1,a_2 \in A$
then $(A,*)$ is a Lie algebra that we denote $A_\LL$. 

\smallskip

\noindent (b)\ \ If $V$ is a vector space over $\bk$ then $\Leul_\bk(V)$ denotes the set of all $\bk$-linear maps $V \to V$.
We always regard $\Leul_\bk(V)$ as an associative algebra, the multiplication being the composition of maps.
We write $\Leul_\bk(V)_\LL$ for the corresponding Lie algebra, as explained in part (a).

\smallskip

\noindent (c)\ \ Let $A$ be an algebra over a field $\bk$.
The set of all $\bk$-derivations of $A$ is denoted $\Der_\bk(A)$, and is a $\bk$-subspace of $\Leul_\bk(A)$.
If $D,E \in \Der_\bk(A)$ then $D \circ E - E \circ D \in \Der_\bk(A)$, so $\Der_\bk(A)$ is a subalgebra of the Lie algebra $\Leul_\bk(A)_\LL$.
\end{notation}

\begin{notation}  \label {hfc026mx4528sxgs}
Let $A$ be an associative algebra over a field $\bk$.
If $\Delta$ is any subset of $A$ then $\bar\Delta$ denotes the subalgebra of $A$ generated by $\Delta$ 
and $\tilde\Delta$ denotes the subalgebra of $A_\LL$ generated by $\Delta$.
We have $\Delta \subseteq \tilde\Delta \subseteq \bar\Delta$.

Note that $\bar\Delta$ is the intersection of all $\bk$-subspaces $A'$ of $A$ that are closed under the multiplication of $A$
and satisfy $\Delta \subseteq A'$.  We also point out that $\bar\Delta = \Span_\bk( \Delta_\circ )$ where
$\Delta_\circ$ is the set of \textbf{nonempty} products of elements of $\Delta$.

The notation $\bar \Delta$ has the above meaning even when $A$ happens to have a unity $1$.
For instance, if $A$ is the commutative polynomial ring $\bk[x,y]$ and $\Delta = \{x,y\}$ then $\bar\Delta$ is the ideal generated by $x,y$.
\end{notation}

\begin{nothing*}
We adopt the  \textbf{right-associativity convention},  which stipulates that
any unparenthesized product $a_n \cdots a_1$ (where $n>2$ and $a_1, \dots, a_n$ are elements of some algebra)
is to be interpreted as meaning  $a_n \cdot ( a_{n-1} \cdots ( a_3 \cdot (a_2 \cdot a_1)) \dots)$.
For instance, $a \cdot b \cdot c \cdot d = a \cdot (b \cdot (c \cdot d))$.
This convention is in effect throughout the paper, in all types of algebras.
\end{nothing*}

\begin{nothing*}  \label {pa9enbr2o39efcp230wsd}
Let $(A,\cdot)$ be a Lie algebra.
It is customary to use the bracket notation for multiplication, i.e., to define $[x,y] = x \cdot y$ for all $x,y \in A$.
We shall use both notations. 
If $x_0,x_1,\dots,x_n \in A$, the element $[x_n, [x_{n-1}, \dots [x_2,[x_1,x_0]] \dots ]]$ of $A$ is simply denoted $[x_n, \dots, x_0]$.
This agrees with the right-associativity convention; for instance, going back and forth between the two notations,
$$
[x_3,x_2,x_1,x_0] = [ x_3, [ x_2, [ x_1, x_0 ]]] = x_3 \cdot (x_2 \cdot (x_1 \cdot x_0)) = x_3 \cdot x_2 \cdot x_1 \cdot x_0\;.
$$
In general we have $[x_n, \cdots, x_0] = x_n \cdots x_0$, where we allow the case $n=0$: $[x_0] = x_0$.

If $x \in A$ then the map $y \mapsto [x,y]$ is denoted $\ad(x) : A \to A$.
The following properties of $\ad$ are well known, and easily verified:\\
(i) $\ad : A \to \Leul_\bk(A)_\LL$ is a homomorphism of Lie algebras;\\
(ii) $\ad(A) \subseteq \Der_\bk(A)$.\\
Note that if $n>0$ and $x_0, \dots, x_n \in A$ then 
$$
[x_n, \dots, x_0] = \big( \ad(x_n) \circ \cdots \circ \ad(x_1) \big) (x_0)\;.
$$
\end{nothing*}

We leave it to the reader to check the following fact (which is trivial in the associative case):

\begin{lemma}  \label {0vh349rf920}
Let $(A,\cdot)$ be an associative algebra or a Lie algebra over a field~$\bk$.
\begin{enumerata}

\item Let $x_1, \dots, x_n \in A$ and let $x \in A$ be any parenthesization of the product $x_n \cdots x_1$.
Then $x$ is a finite sum $\sum_i r_i \, x_{i,n} \cdots x_{i,1}$ where, for each $i$,
$r_i$ belongs to the prime subring of $\bk$ and $(x_{i,1}, \dots, x_{i,n})$ is a permutation of $(x_1, \dots, x_n)$.

\item If $H$ is a generating set for $A$  then 
$$
A = \Span_\bk \setspec{ x_m \cdots x_0 }{ \text{$m\in\Nat$ and $(x_0, \dots, x_m) \in H^{m+1}$} } .
$$

\end{enumerata}
\end{lemma}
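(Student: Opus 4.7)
The plan is to dispose of the associative case by a direct appeal to associativity, and to treat the Lie case with a nested induction. In the associative case any parenthesization of $x_n \cdots x_1$ equals the fully right-associated product itself, so (a) holds with a single summand of coefficient $1$ and identity permutation, and (b) follows by the same one-line argument that I use below.

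For (a) in the Lie case I would argue by induction on $n$, the cases $n \le 2$ being immediate. For $n \ge 3$, the given parenthesization decomposes as $x = [y, z]$ where $y$ and $z$ are parenthesizations of the contiguous subsequences $x_n, \ldots, x_{n-k+1}$ and $x_{n-k}, \ldots, x_1$ with $1 \le k \le n-1$. Applying the induction hypothesis to $y$ and $z$ and expanding $[y, z]$ by bilinearity reduces the statement to the following subclaim: for any right-normed brackets $[a_k, \ldots, a_1]$ and $[b_m, \ldots, b_1]$ in $A$, the Lie product $[\,[a_k, \ldots, a_1],\,[b_m, \ldots, b_1]\,]$ is an integer linear combination of right-normed brackets formed from a permutation of the combined sequence. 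I would prove the subclaim by a secondary induction on $k$. The base $k = 1$ is immediate since $[a_1,[b_m,\ldots,b_1]] = [a_1,b_m,\ldots,b_1]$. For $k \ge 2$, setting $w = [a_{k-1}, \ldots, a_1]$ and $z = [b_m, \ldots, b_1]$, the Jacobi identity yields
$$
[[a_k, w], z] \;=\; [a_k, [w, z]] \;-\; [w, [a_k, z]].
$$
Now $[a_k, z]$ is already right-normed of length $m+1$ and $w$ is right-normed of length $k-1$, so both $[w, z]$ and $[w, [a_k, z]]$ fall under the secondary induction hypothesis, while bracketing with $a_k$ on the left of any right-normed bracket gives a right-normed bracket. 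Only $\pm 1$ coefficients appear, so the resulting integer combination stays in the prime subring of $\bk$.

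For (b), set $V = \Span_\bk\{\, x_m \cdots x_0 \mid m \in \Nat,\; (x_0, \ldots, x_m) \in H^{m+1}\,\}$. Clearly $H \subseteq V \subseteq A$, so it suffices to show $V$ is a subalgebra of $A$. Closure under addition and scalars is obvious; for closure under multiplication, the product $u \cdot v$ of two right-normed monomials in elements of $H$ is a specific parenthesization of a longer product of elements of $H$, hence by (a) it is a $\Integ$-linear combination of right-normed monomials in elements of $H$, and therefore lies in $V$. The principal obstacle throughout is the nested induction in (a): one must verify that each Jacobi step strictly decreases the chosen measure on the left factor (its length $k$), even though the length of the right factor is allowed to grow by one, and that the only coefficients produced along the way are $\pm 1$ so that they remain in the prime subring of $\bk$.
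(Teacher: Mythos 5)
Your proof is correct: the associative case is indeed immediate, the nested induction in the Lie case (outer induction on $n$, inner induction on the length $k$ of the left factor via the Jacobi identity $[[a_k,w],z]=[a_k,[w,z]]-[w,[a_k,z]]$) is sound, and the derivation of (b) from (a) by showing the span of right-normed monomials is a subalgebra containing $H$ is exactly right. The paper explicitly leaves this verification to the reader, so there is no proof to compare against; your argument is the standard one the author evidently has in mind.
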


\section{Locally nilpotent sets of linear maps}
\label {SEC:Locallynilpotentactions}

Throughout this section, we consider a vector space $V$ over a field $\bk$
and we let $\Leul_\bk(V)$ denote the set of all $\bk$-linear maps $V \to V$.
An element $F$ of $\Leul_\bk(V)$ is said to be \textit{locally nilpotent} if for each $x \in V$ there exists $n>0$ such that $F^n(x)=0$.
We write $\locnil( \Leul_\bk V )$ for the set of all locally nilpotent elements of $\Leul_\bk(V)$.

If $\Delta$ is a set then $\Delta^\Nat$ denotes the set of all infinite sequences $(a_0,a_1,a_2, \dots)$ of elements of~$\Delta$.

\begin{Definition} \label {0f9v358f6c7we9ww5gr}
Given a subset $\Delta$ of $\Leul_\bk(V)$, we define the subsets $\Nil(\Delta)$ and $\UNil(\Delta)$ of $V$ as follows.
If $\Delta=\emptyset$, we set $\Nil(\Delta)=V=\UNil(\Delta)$.
If $\Delta \neq \emptyset$,
\begin{itemize}

\item $\Nil(\Delta)$ is the set of all $x \in V$ satisfying:
\begin{quote}
for every sequence $(F_0,F_1,\dots) \in \Delta^\Nat$ there exists $n \in \Nat$ such that\\
$(F_n \circ \cdots \circ F_0)(x) =0$;
\end{quote}

\item $\UNil(\Delta)$ is the set of all $x \in V$ satisfying:
\begin{quote}
there exists $n \in \Nat$ such that for every $(F_1,\dots,F_n) \in \Delta^n$ we have 
$(F_n \circ \cdots  \circ F_1)(x) =0$.
\end{quote}

\end{itemize}
Note that  $\UNil(\Delta) \subseteq \Nil(\Delta)$ are linear subspaces of $V$.
If $\Nil(\Delta) = V$, we say that
$\Delta$ is a \textit{locally nilpotent subset of $\Leul_\bk(V)$} (or simply, that $\Delta$ is \textit{locally nilpotent});
if $\UNil(\Delta) = V$, we say that $\Delta$ is \textit{uniformly locally nilpotent}.
\end{Definition}

If $\Delta$ is a locally nilpotent subset of $\Leul_\bk(V)$ then $\Delta \subseteq \locnil( \Leul_\bk V )$;
however, the converse is not true.

If $\Delta$ is a uniformly locally nilpotent subset of $\Leul_\bk(V)$ then $\Delta$ is locally nilpotent,
but the converse is not true (see Ex.\ \ref{928349rfer8d9}).
However, note the following:

\begin{lemma} \label {0cccno12q9wdnp}
Let $\Delta$ be a finite subset of $\Leul_\bk(V)$. Then $\Nil(\Delta) = \UNil(\Delta)$.
In particular, $\Delta$ is locally nilpotent if and only if it is uniformly locally nilpotent.
\end{lemma}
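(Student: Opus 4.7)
The plan is to reduce the claim to an application of Theorem~\ref{0veohtg09ernd0} (the König-style finiteness result for directed trees stated above). The inclusion $\UNil(\Delta) \subseteq \Nil(\Delta)$ is already recorded in Definition~\ref{0f9v358f6c7we9ww5gr}, so only $\Nil(\Delta) \subseteq \UNil(\Delta)$ needs argument. The final ``in particular'' clause is then immediate, since being locally nilpotent (resp.\ uniformly so) means $\Nil(\Delta)=V$ (resp.\ $\UNil(\Delta)=V$).

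To prove $\Nil(\Delta) \subseteq \UNil(\Delta)$, I would fix $x \in \Nil(\Delta)$ and dispose of the degenerate cases first: if $\Delta = \emptyset$ then both sides equal $V$ by convention, and if $x = 0$ then $x \in \UNil(\Delta)$ trivially (any $n \geq 1$ works since each $F_i$ is linear). So assume $\Delta$ is finite, nonempty, and $x \neq 0$.

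The key construction is a directed graph $(S,E)$ with $S$ the set of finite sequences $s=(F_1,\dots,F_k)$ (of any length $k\geq 0$, including the empty sequence $()$) with entries in $\Delta$ such that $(F_k \circ \cdots \circ F_1)(x) \neq 0$, with the empty composition interpreted as the identity. Edges go from $s$ to every one-step extension $s' = (F_1,\dots,F_k,F)$ that still lies in $S$. Three things then need verifying: first, $(S,E)$ is a directed tree with root $()$ (because if the product along a sequence is nonzero, then so are the products along all its prefixes, by linearity of each $F_i$, and the unique path from $()$ to $s$ is the list of prefixes of $s$); second, each vertex has at most $|\Delta|$ outgoing edges, which is finite by hypothesis; third, there is no infinite path starting at $()$, for such a path would produce a sequence $(F_1,F_2,\dots) \in \Delta^\Nat$ with $(F_k\circ\cdots\circ F_1)(x)\neq 0$ for every $k$, contradicting $x \in \Nil(\Delta)$.

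Having verified the hypotheses, Theorem~\ref{0veohtg09ernd0} gives that $S$ is finite. Letting $n = 1 + \max\{\,k : (F_1,\dots,F_k) \in S\,\}$, every $n$-tuple $(F_1,\dots,F_n) \in \Delta^n$ fails to belong to $S$, which means $(F_n \circ \cdots \circ F_1)(x) = 0$; hence $x \in \UNil(\Delta)$, as required. The only subtle point is the bookkeeping that shows $(S,E)$ really is a tree with $()$ as root, but this is immediate from the prefix-closure remark above and the unique way to ``grow'' a sequence one element at a time; no genuine obstacle is present once that is noticed.
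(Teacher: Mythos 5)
Your proof is correct and follows essentially the same route as the paper's: both build the directed tree of finite sequences $(F_1,\dots,F_k)\in\Delta^k$ with $(F_k\circ\cdots\circ F_1)(x)\neq 0$, observe that each vertex has at most $|\Delta|$ successors and that $x\in\Nil(\Delta)$ rules out infinite paths from the root, and then invoke Theorem~\ref{0veohtg09ernd0} to conclude $S$ is finite and hence $x\in\UNil(\Delta)$. Your extra remarks on prefix-closure and the degenerate cases are fine but not needed beyond what the paper already records.
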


\begin{proof}
It suffices to show that  $\Nil(\Delta) \subseteq \UNil(\Delta)$.
Let  $x \in \Nil(\Delta) \setminus \{0\}$.
Consider the set $S$ of finite sequences $(F_1, \dots, F_n)$ of elements of $\Delta$
satisfying $(F_n \circ \cdots\circ  F_1)(x) \neq 0$ (where the empty sequence $()$ belongs to $S$).
Let $E \subseteq S \times S$ be the set of ordered pairs of the form
$\big( (F_1, \dots, F_n), (F_1, \dots, F_n, F_{n+1}) \big)$.
Then $(S,E)$ is a directed tree with root $()$ (see \ref{09cn3049dEw}).
For each vertex $v \in S$, the number of $v'\in S$ satisfying $(v,v') \in E$ is at most $|\Delta|$, which is finite.
The fact that $x \in \Nil(\Delta)$ implies that there is no infinite path $\big( (), (F_1), (F_1,F_2), \dots \big)$ in this tree.
It follows from Thm \ref{0veohtg09ernd0} that $S$ is a finite set, and this implies that $x \in \UNil(\Delta)$.
\qed\end{proof}

\begin{Definition} \label {02n3Ffjiw39dg7tgflfia}
Given a nonempty subset $\Delta$ of $\Leul_\bk(V)$ we define a map $\deg_\Delta : V \to \Nat \cup \{ -\infty, \infty\}$
by declaring that
$\deg_\Delta(0) = -\infty$,
 if $x \in V \setminus \UNil(\Delta)$ then $\deg_\Delta(x) = \infty$,
and if $x \in \UNil(\Delta) \setminus \{0\}$ then $\deg_\Delta(x)$ is equal to:
$$
\max\setspec{ n \in \Nat }{ \text{there exists $(F_1, \dots, F_n) \in \Delta^n$ satisfying $(F_n \circ \cdots \circ F_1)(x) \neq 0$} }
$$
(to be clear,
if $x \neq 0$ and $F(x)=0$ for all $F\in\Delta$ then we define $\deg_\Delta(x)=0$).
If $\Delta = \emptyset$ then we define 
$\deg_\Delta(0) = -\infty$ and $\deg_\Delta(x) = 0$ for all $x \in V \setminus \{0\}$.
\end{Definition}

\begin{lemma}  \label {kjcoo93edg98uwb}
Let $\Delta$ be a subset of $\Leul_\bk(V)$.  Then the map $\deg_\Delta : V \to \Nat \cup \{ \pm\infty \}$
has the following properties, where $x,y$ are arbitrary elements of $V$.
\begin{enumerata}

\item $\deg_\Delta(x) < 0$ if and only if $x=0$.

\item $\deg_\Delta(x) \le 0$ if and only if $F(x) = 0$ for all $F \in \Delta$.

\item $\deg_\Delta(x) < \infty$ if and only if $x \in \UNil(\Delta)$.

\item If $x  \in \UNil(\Delta) \setminus\{0\}$ and $F \in \Delta$ then $\deg_\Delta(F(x)) < \deg_\Delta(x)$.

\item $\deg_\Delta(x+y) \le \max( \deg_\Delta(x), \deg_\Delta(y) )$  

\end{enumerata}
\end{lemma}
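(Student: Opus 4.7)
The proof is a direct unpacking of Definition \ref{02n3Ffjiw39dg7tgflfia}, and the main care needed is in juggling the three cases $x = 0$, $x \in \UNil(\Delta) \setminus \{0\}$, and $x \in V \setminus \UNil(\Delta)$, together with the special case $\Delta = \emptyset$ (which the definition handles by fiat). The plan is to dispose of (a), (b), (c) by inspection of the definition, derive (d) from the maximality built into the definition of $\deg_\Delta$, and deduce (e) from $\bk$-linearity of the $F_i$.

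Parts (a) and (c) are tautological: $\deg_\Delta(x) = -\infty$ occurs only when $x = 0$, and $\deg_\Delta(x) = \infty$ occurs exactly when $x \notin \UNil(\Delta)$. For (b), I would observe that if $F(x) = 0$ for all $F \in \Delta$, then $x \in \UNil(\Delta)$ (taking $n = 1$) and the $\max$ in the definition equals $0$, so $\deg_\Delta(x) \leq 0$; conversely, if $\deg_\Delta(x) \leq 0$ then either $x = 0$ (trivial) or the maximum is $0$, so no $F \in \Delta$ sends $x$ outside $\ker$. The case $\Delta = \emptyset$ is covered by the convention $\deg_\Delta(x) = 0$ for $x \neq 0$.

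For (d), let $d = \deg_\Delta(x) \in \Nat$ and fix $F \in \Delta$. If $F(x) = 0$ we are done, so assume $F(x) \neq 0$. For any $(G_1,\dots,G_d) \in \Delta^d$,
\[
(G_d \circ \cdots \circ G_1)(F(x)) = (H_{d+1} \circ \cdots \circ H_1)(x),
\]
where $H_1 = F$ and $H_i = G_{i-1}$ for $2 \leq i \leq d+1$; the right-hand side is $0$ by maximality of $d$. Hence $F(x) \in \UNil(\Delta)$ and $\deg_\Delta(F(x)) \leq d - 1 < d$. (In the borderline case $d = 0$, part (b) already forces $F(x) = 0$, reducing us to the trivial case.)

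For (e), the cases $x = 0$, $y = 0$, or $\max(\deg_\Delta(x),\deg_\Delta(y)) = \infty$ are immediate. Otherwise, setting $d = \max(\deg_\Delta(x),\deg_\Delta(y)) \in \Nat$, the $\bk$-linearity of each $F_i$ gives
\[
(F_{d+1} \circ \cdots \circ F_1)(x+y) = (F_{d+1} \circ \cdots \circ F_1)(x) + (F_{d+1} \circ \cdots \circ F_1)(y) = 0
\]
for every $(F_1,\dots,F_{d+1}) \in \Delta^{d+1}$, so $x + y \in \UNil(\Delta)$ with $\deg_\Delta(x+y) \leq d$. I do not anticipate any genuine obstacle; the only slightly delicate point is the index shift in (d), which is precisely what lets a composition of length $d$ applied to $F(x)$ be recognized as a composition of length $d+1$ applied to $x$.
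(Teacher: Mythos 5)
Your proof is correct, and since the paper explicitly leaves the verification of this lemma to the reader, your direct unpacking of Definition \ref{02n3Ffjiw39dg7tgflfia} is exactly the intended argument. The one genuinely delicate point --- the length-$(d+1)$ reindexing in (d), together with the borderline case $d=0$ --- is handled properly.
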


Verification of Lemma \ref{kjcoo93edg98uwb} is left to the reader.

The notations $\bar\Delta$ and $\tilde\Delta$ are defined in \ref{hfc026mx4528sxgs}.

\begin{proposition}  \label {d0c1i2eb9ew03j}
Let $\Delta$ be a subset of $\Leul_\bk(V)$,
let $\bar\Delta$ be the associative subalgebra of $\Leul_\bk(V)$ generated by $\Delta$
and let $\tilde\Delta$ be the Lie subalgebra of $\Leul_\bk(V)_\LL$ generated by $\Delta$.
Then $\Delta \subseteq \tilde\Delta \subseteq \bar\Delta \subseteq \Leul_\bk(V)$ and the following hold.
\begin{enumerata}

\item $\Nil(\bar\Delta) = \Nil(\tilde\Delta) = \Nil(\Delta)$

\item $\deg_{\bar\Delta} = \deg_{\tilde\Delta} = \deg_{\Delta}$ and $\UNil(\bar\Delta) = \UNil(\tilde\Delta) = \UNil(\Delta)$

\item If $\Delta$ is locally nilpotent then so are $\bar\Delta$ and $\tilde\Delta$.
\item If $\Delta$ is uniformly locally nilpotent then so are $\bar\Delta$ and $\tilde\Delta$.

\end{enumerata}
\end{proposition}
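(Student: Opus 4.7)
The proof will reduce everything to two claims: $\Nil(\Delta) \subseteq \Nil(\bar\Delta)$ and $\deg_{\bar\Delta}(x) \le \deg_\Delta(x)$ for all $x \in V$. I first record the trivial pieces. The containment $\tilde\Delta \subseteq \bar\Delta$ holds because $\bar\Delta$ is closed under the bracket $[F,G]=FG-GF$ and contains $\Delta$, so it is a Lie subalgebra of $\Leul_\bk(V)_\LL$ containing $\Delta$. Inspection of Definition \ref{0f9v358f6c7we9ww5gr} shows that $\Nil$ and $\UNil$ are antitone in $\Delta$ while $\deg_{(\cdot)}(x)$ is monotone, giving the chains $\Nil(\bar\Delta) \subseteq \Nil(\tilde\Delta) \subseteq \Nil(\Delta)$ and $\UNil(\bar\Delta) \subseteq \UNil(\tilde\Delta) \subseteq \UNil(\Delta)$, together with $\deg_\Delta(x) \le \deg_{\tilde\Delta}(x) \le \deg_{\bar\Delta}(x)$. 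Combined with the two target claims, this squeezes everything into equalities and handles the $\tilde\Delta$ assertions; parts (c) and (d) then reduce to (a) and (b) applied in the cases $\Nil(\Delta)=V$ and $\UNil(\Delta)=V$.

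For the degree inequality (which also yields $\UNil(\Delta) \subseteq \UNil(\bar\Delta)$ via Lemma \ref{kjcoo93edg98uwb}(c)), the essential input is the description $\bar\Delta = \Span_\bk(\Delta_\circ)$ from Notation \ref{hfc026mx4528sxgs}. Given $G_1,\dots,G_n \in \bar\Delta$ with $(G_n \circ \cdots \circ G_1)(x) \ne 0$, I would write each $G_i$ as a finite sum $\sum_j c_{i,j}\Pi_{i,j}$ with $\Pi_{i,j} \in \Delta_\circ$ and expand the composition by multilinearity. At least one summand $c_{n,j_n}\cdots c_{1,j_1}(\Pi_{n,j_n} \circ \cdots \circ \Pi_{1,j_1})(x)$ must be nonzero, and the map $\Pi_{n,j_n} \circ \cdots \circ \Pi_{1,j_1}$ is itself a composition of at least $n$ elements of $\Delta$. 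Hence $\deg_\Delta(x) \ge n$, giving $\deg_\Delta(x) \ge \deg_{\bar\Delta}(x)$.

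For the inclusion $\Nil(\Delta) \subseteq \Nil(\bar\Delta)$ the same expansion must be combined with a compactness argument. Given $x \in \Nil(\Delta)$ and $(G_0, G_1, \dots) \in \bar\Delta^\Nat$, I argue by contradiction: assume $(G_n \circ \cdots \circ G_0)(x) \ne 0$ for every $n$. Writing each $G_i = \sum_{j=1}^{k_i} c_{i,j}\Pi_{i,j}$ as above, I build the directed tree $(S,E)$ rooted at the empty tuple whose nodes at level $n+1$ are tuples $(j_0,\dots,j_n)$ with $1 \le j_i \le k_i$ satisfying $(\Pi_{n,j_n} \circ \cdots \circ \Pi_{0,j_0})(x) \ne 0$, edges being one-step extensions. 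Finite branching of $(S,E)$ is immediate from the finiteness of each $k_i$, and the same expansion argument applied level by level shows that $(S,E)$ contains nodes at every level. By the contrapositive of Theorem \ref{0veohtg09ernd0}, there exists an infinite path $(j_0, j_1, j_2, \dots)$, and concatenating the corresponding products $\Pi_{0,j_0}, \Pi_{1,j_1}, \dots$ produces an infinite sequence in $\Delta$ whose partial compositions never annihilate $x$, contradicting $x \in \Nil(\Delta)$.

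The main obstacle is this K\"onig-style compactness step in the $\Nil$ case. The $\UNil$/degree argument only requires a finite multilinear expansion, whereas $\Nil$ requires upgrading an infinite sequence of linear combinations into an infinite sequence of pure $\Delta$-products. Theorem \ref{0veohtg09ernd0} is exactly what provides the needed upgrade, and this explains its presence in the preliminaries.
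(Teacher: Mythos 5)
Your proposal is correct and follows essentially the same route as the paper: both rest on the description $\bar\Delta=\Span_\bk(\Delta_\circ)$, a multilinear expansion for the degree inequality, and the K\"onig-type tree argument via Theorem \ref{0veohtg09ernd0} for the $\Nil$ inclusion. The only cosmetic differences are that the paper factors the argument through the intermediate set $\Delta_\circ$ (proving $\Nil(\Delta)=\Nil(\Delta_\circ)$ and $\deg_{\Delta_\circ}=\deg_\Delta$ first) and runs the tree argument in the positive direction (finitely many vertices, hence some level is entirely annihilated), whereas you merge the steps and argue by contradiction.
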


\begin{proof}
We may assume that $\Delta \neq \emptyset$.
Let $\Delta_\circ$ be as in \ref{hfc026mx4528sxgs} and observe that 
$\Delta \subseteq \Delta_\circ \subseteq \bar\Delta$
and $\Delta \subseteq \tilde\Delta \subseteq \bar\Delta$.
These inclusions have the following trivial consequences:
\begin{gather}
\label {9812hsj2k4gs93u} \text{$\Nil(\bar\Delta) \subseteq \Nil(\Delta_\circ) \subseteq \Nil(\Delta)$
and $\Nil(\bar\Delta) \subseteq \Nil(\tilde\Delta) \subseteq \Nil(\Delta)$,} \\
\label {tw8254hdsdh2ireg545j} \text{$\UNil(\bar\Delta) \subseteq \UNil(\Delta_\circ) \subseteq \UNil(\Delta)$
and $\UNil(\bar\Delta) \subseteq \UNil(\tilde\Delta) \subseteq \UNil(\Delta)$,} \\
\label {93hfd54e15syu3rtdj} \begin{minipage}[t]{.9\textwidth}
for all $x \in V$ we have\\
$\deg_{\Delta}(x) \le \deg_{\Delta_\circ}(x) \le \deg_{\bar\Delta}(x)$
and $\deg_{\Delta}(x) \le \deg_{\tilde\Delta}(x) \le \deg_{\bar\Delta}(x)$.
\end{minipage}
\end{gather}

Let $x \in \Nil(\Delta)$.
If $(G_0,G_1,\dots) \in \Delta_\circ^\Nat$ then there exist $(F_0,F_1,\dots ) \in \Delta^\Nat$ and $1 \le n_0 < n_1 < \cdots$ in $\Nat$ such that
$G_0 = F_{n_0-1} \circ \cdots \circ F_0$, 
$G_1 = F_{n_1-1} \circ \cdots \circ F_{n_0}$, 
$G_2 = F_{n_2-1} \circ \cdots \circ F_{n_1}$, and so on.
Since $(F_k \circ \cdots \circ F_0)(x) = 0$ for some $k \in \Nat$, 
it follows that $(G_m \circ \cdots \circ G_0)(x) = 0$ for some $m \in \Nat$.
This shows that $x \in \Nil( \Delta_\circ )$ and hence that $\Nil( \Delta ) \subseteq  \Nil( \Delta_\circ )$.
By \eqref{9812hsj2k4gs93u}, we get  $\Nil( \Delta_\circ ) =  \Nil( \Delta )$.

We continue to consider $x \in \Nil(\Delta) = \Nil(\Delta_\circ)$.
Let $(H_0,H_1,\dots) \in \bar\Delta^\Nat$; let us prove that there exists $n$ such that $(H_n \circ \cdots \circ H_0)(x)=0$.
Recall that $\bar\Delta = \Span_\bk( \Delta_\circ )$; for each $i \in \Nat$, we choose 
$m_i \in \Nat$, $\lambda_{i,0}, \dots,\lambda_{i,m_i}  \in \bk$ and $G_{i,0}, \dots, G_{i,m_i} \in \Delta_\circ$ satisfying
\begin{equation} \label {9jdq90we120rfh8we}
\textstyle   H_i = \sum_{j=0}^{m_i}  \lambda_{i,j} G_{i,j} \, .
\end{equation}
The element $x$, the sequence $(H_0,H_1,\dots)$ and the relations \eqref{9jdq90we120rfh8we} determine
a directed tree $T$ that we now define.
The elements of the vertex-set $S$ are all finite sequences $(G_{0,j_0}, G_{1,j_1}, \dots, G_{n,j_n} )$ such that  $(G_{n,j_n} \circ \cdots \circ G_{0,j_0} )(x) \neq 0$
(where  $0 \le j_i \le m_i$ for all $i = 0, \dots, n$ and where the $G_{i,j}$ are the same as in \eqref{9jdq90we120rfh8we}).
The edge-set of $T$ is the subset $E$ of $S \times S$ whose elements are the pairs of the form
$\big( (G_{0,j_0}, \dots, G_{n,j_n} ), \, (G_{0,j_0}, \dots, G_{n,j_n}, G_{n+1,j_{n+1}} ) \big)$.
The root of $T$ is the empty sequence $() \in S$.
Note that if 
$\big(
(), \, 
( G_{0,j_0} ), \, 
( G_{0,j_0}, G_{1,j_1} ), \, 
( G_{0,j_0}, G_{1,j_1}, G_{2,j_2} ), \, \dots 
\big)$
is an infinite path in $T$ then the sequence $( G_{0,j_0}, G_{1,j_1}, G_{2,j_2}, \dots ) \in \Delta_\circ^\Nat$ satisfies
$(G_{n,j_n} \circ \cdots \circ G_{0,j_0} )(x) \neq 0$ for all $n \in \Nat$,
which contradicts the fact that  $x \in \Nil(\Delta_\circ)$.  So, {\it there is no infinite path in $T$ starting at the root.}
We also note that if $v = (G_{0,j_0}, \dots, G_{n,j_n} )$ is any vertex then the number of vertices $v' \in S$
satisfying $(v,v') \in E$ is at most $m_{n+1}$, which is finite.
It follows from Thm \ref{0veohtg09ernd0} that $T$ has finitely many vertices.
So there exists $n \in \Nat$ satisfying:
\begin{equation} \label {2830fuhe09323wsdf}
(G_{n,j_n} \circ \cdots \circ G_{0,j_0} )(x) = 0 \quad \text{for all $(j_0, \dots, j_n) \in \II_{m_0} \times \cdots \times \II_{m_n}$}
\end{equation}
where we write  $\II_m = \setspec{i \in \Nat}{ 0 \le i \le m }$. 
Let $n$ be as in \eqref{2830fuhe09323wsdf} and note that $(H_n \circ \cdots \circ H_0)(x)$ is a linear combination 
\begin{equation} \label {dpcenrb02923b0md}
\sum_{(j_0, \dots, j_n) \in \II_{m_0} \times \cdots \times \II_{m_n}} \mu_{(j_0, \dots, j_n)} (G_{n,j_n} \circ \cdots \circ G_{0,j_0} )(x) 
\end{equation}
where $\mu_{(j_0, \dots, j_n)} \in \bk$ for all $(j_0, \dots, j_n) \in \II_{m_0} \times \cdots \times \II_{m_n}$.
By  \eqref{2830fuhe09323wsdf}, all terms of the sum \eqref{dpcenrb02923b0md} are zero, so $(H_n \circ \cdots \circ H_0)(x) = 0$.
This proves that $x \in \Nil( \bar \Delta )$ and hence that  $\Nil( \Delta ) \subseteq  \Nil( \bar \Delta )$.
Then \eqref{9812hsj2k4gs93u} gives $\Nil( \bar \Delta ) = \Nil( \tilde \Delta ) = \Nil( \Delta )$,
so (a) is proved.

Let us prove (b).
If $(G_1, \dots, G_n) \in \Delta_\circ^n$ then $G_n \circ \cdots \circ G_1 = F_N \circ \cdots \circ F_1$ for some $N\ge n$ and $(F_1, \dots, F_N) \in \Delta^N$.
This implies that $\deg_{ \Delta_\circ }(x) \le \deg_{ \Delta }(x)$ for all $x \in V$, so we obtain
$\deg_{ \Delta_\circ } = \deg_{ \Delta }$ by \eqref{93hfd54e15syu3rtdj}.

Consider $x \in V \setminus \{0\}$ and let us prove that
$\deg_{\bar\Delta}(x) \le \deg_{\Delta_\circ}(x)$. We may assume that $\deg_{\Delta_\circ}(x) = n \in \Nat$. 
If $(H_1, \dots, H_{n+1}) \in \bar\Delta^{n+1}$ then $(H_{n+1} \circ \cdots \circ H_1)(x)$ is a finite sum $\sum_i \lambda_i t_i$
where $\lambda_i \in \bk$ and $t_i = (G_{i,n+1} \circ \cdots \circ G_{i,1})(x)$ with $G_{i,1},\dots,G_{i,n+1} \in \Delta_\circ$.
Since $\deg_{\Delta_\circ}(x) = n$ we have $t_i=0$ for all $i$, so $(H_{n+1} \circ \cdots \circ H_1)(x) = 0$.
Consequently, $\deg_{\bar\Delta}(x) \le n = \deg_{\Delta_\circ}(x)$.
We get $\deg_{\bar\Delta} = \deg_{\Delta_\circ}$ by \eqref{93hfd54e15syu3rtdj},
and since $\deg_{ \Delta_\circ } = \deg_{ \Delta }$ it follows that $\deg_{\bar\Delta} = \deg_{\Delta}$.
Using \eqref{93hfd54e15syu3rtdj} again gives
$\deg_{\Delta} = \deg_{\tilde\Delta} = \deg_{\bar\Delta}$, and hence $\UNil(\Delta) = \UNil(\tilde\Delta) = \UNil(\bar\Delta)$.
So (b) is proved.
Assertions (c) and (d) follow from (a) and (b).
\qed\end{proof}

\begin{corollary}
Let $A$ be either an associative subalgebra of $\Leul_\bk(V)$ or a Lie subalgebra of $\Leul_\bk(V)_\LL$. 
If $A$ is finitely generated then $\Nil(A) = \UNil(A)$ and, consequently,
$A$ is a locally nilpotent subset of $\Leul_\bk(V)$ if and only if it is 
a uniformly locally nilpotent subset of $\Leul_\bk(V)$.
\end{corollary}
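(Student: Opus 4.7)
The plan is to reduce the corollary to the finite case (Lemma \ref{0cccno12q9wdnp}) via Proposition \ref{d0c1i2eb9ew03j}. Since $A$ is finitely generated, we may fix a finite subset $\Delta \subseteq A$ that generates $A$ either as an associative subalgebra of $\Leul_\bk(V)$ or as a Lie subalgebra of $\Leul_\bk(V)_\LL$, according to which hypothesis applies. In the first case $A = \bar\Delta$ and in the second case $A = \tilde\Delta$, using the notation of \ref{hfc026mx4528sxgs}.

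Next, I would apply parts (a) and (b) of Proposition \ref{d0c1i2eb9ew03j} to this $\Delta$: these give
\[
\Nil(A) \;=\; \Nil(\bar\Delta) \;=\; \Nil(\tilde\Delta) \;=\; \Nil(\Delta)
\]
and similarly $\UNil(A) = \UNil(\Delta)$, regardless of which of the two situations we are in. Since $\Delta$ is finite, Lemma \ref{0cccno12q9wdnp} yields $\Nil(\Delta) = \UNil(\Delta)$. Chaining these equalities gives $\Nil(A) = \UNil(A)$, which is the first assertion.

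For the consequence, observe that $A$ is locally nilpotent precisely when $\Nil(A) = V$ and uniformly locally nilpotent precisely when $\UNil(A) = V$; so the equality $\Nil(A) = \UNil(A)$ immediately yields that the two properties coincide for $A$. There is no real obstacle here: the entire content of the corollary is packaged in the two earlier results, and the only thing to be careful about is simply to invoke the correct part of Proposition \ref{d0c1i2eb9ew03j} (the one for $\bar\Delta$ or the one for $\tilde\Delta$) according to whether $A$ is associative or Lie.
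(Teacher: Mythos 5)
Your proposal is correct and follows exactly the paper's argument: choose a finite generating set $\Delta$, use Proposition \ref{d0c1i2eb9ew03j} to identify $\Nil(A)$ with $\Nil(\Delta)$ and $\UNil(A)$ with $\UNil(\Delta)$, and then apply Lemma \ref{0cccno12q9wdnp} to the finite set $\Delta$. Nothing is missing.
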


\begin{proof}
Let $\Delta$ be a finite generating set for the algebra $A$.
Then $\Nil(A) = \Nil(\Delta) = \UNil(\Delta) = \UNil(A)$ by Prop.\ \ref{d0c1i2eb9ew03j} and Lemma \ref{0cccno12q9wdnp}.
\qed\end{proof}

\section*{A special case: sets of derivations}

Let $(A,\cdot)$ be an algebra over a field $\bk$, in the sense of \ref{0bg93mm5k6gnndrtuiecj8}(a).
Recall (from \ref{p0vQ398vf83ms43rt7uxcy9w47F}) that this determines a Lie subalgebra $\Der_\bk(A)$ of $\Leul_\bk(A)_\LL$.
If $\Delta$ is a subset of  $\Der_\bk(A)$ then $\Delta \subseteq \Leul_\bk(A)$, so it makes sense to consider
the subsets $\Nil(\Delta)$ and $\UNil(\Delta)$ of $A$, and the map $\deg_\Delta : A \to \Nat \cup \{ \pm \infty \}$.

The rest of the section is devoted to the proof of:

\begin{theorem}  \label {cp09vv3b49fooOi1wqk498fb}
Let $(A,\cdot)$ be an algebra over a field $\bk$. 
If $\Delta$ is a subset of $\Der_\bk(A)$ then the following hold.
\begin{enumerata}

\item The sets $\Nil(\Delta)$ and $\UNil(\Delta)$ are subalgebras of $(A,\cdot)$.

\item The map $\deg_\Delta : A \to \Nat \cup \{ \pm\infty \}$ satisfies
$\deg_\Delta(x\cdot y) \le \deg_\Delta(x) + \deg_\Delta(y)$ for all $x,y \in \UNil(\Delta)$.

\end{enumerata}
\end{theorem}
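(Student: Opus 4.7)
The engine is the generalized Leibniz formula for compositions of derivations: for any $D_1, \ldots, D_n \in \Der_\bk(A)$ and $x, y \in A$,
\[
(D_n \circ \cdots \circ D_1)(x \cdot y) \;=\; \sum_{I \sqcup J = \{1, \ldots, n\}} D_I(x) \cdot D_J(y),
\]
where, for $I = \{i_1 < \cdots < i_r\}$, we set $D_I = D_{i_r} \circ \cdots \circ D_{i_1}$ (and $D_\emptyset = \id$). This is a direct induction on $n$ from the ordinary Leibniz rule.

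Given this identity, part (b) is pigeonhole. Let $x, y \in \UNil(\Delta)$ with $\deg_\Delta(x) = p$ and $\deg_\Delta(y) = q$, let $n > p + q$, and let $(D_1, \ldots, D_n) \in \Delta^n$. Every partition $(I, J)$ of $\{1, \ldots, n\}$ satisfies $|I| > p$ or $|J| > q$, and $|I| > \deg_\Delta(x)$ (resp.\ $|J| > \deg_\Delta(y)$) forces $D_I(x) = 0$ (resp.\ $D_J(y) = 0$) by the very definition of $\deg_\Delta$. Hence every summand of the Leibniz expansion of $(D_n \circ \cdots \circ D_1)(xy)$ vanishes, so $\deg_\Delta(xy) \le p + q$. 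This proves (b) and at the same time shows that $\UNil(\Delta)$ is closed under multiplication. Since $\Nil(\Delta)$ and $\UNil(\Delta)$ are already $\bk$-subspaces of $A$ by the remarks following Definition \ref{0f9v358f6c7we9ww5gr}, the only remaining statement in (a) is the multiplicative closure of $\Nil(\Delta)$.

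For that, I fix $x, y \in \Nil(\Delta)$ and an arbitrary sequence $(D_1, D_2, \ldots) \in \Delta^\Nat$, and I build the directed tree $T$ in the sense of \ref{09cn3049dEw} whose vertices are the ordered partitions $(I, J)$ of an initial segment $\{1, \ldots, n\}$ (with $n \ge 0$) satisfying $D_I(x) \ne 0$ and $D_J(y) \ne 0$, with root $(\emptyset, \emptyset)$, and where $(I, J)$ at depth $n$ is joined by an edge to each of $(I \cup \{n+1\}, J)$ and $(I, J \cup \{n+1\})$ that happens to be a vertex. The crucial verification is that each vertex has a vertex as parent: if $n = \max(I \cup J) \in I$, then the right-associativity convention gives $D_I = D_n \circ D_{I \setminus \{n\}}$, so $D_I(x) \ne 0$ forces $D_{I \setminus \{n\}}(x) \ne 0$. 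Each vertex has at most two children, and an infinite path from the root would determine a partition $I_\infty \sqcup J_\infty$ of $\{1, 2, \ldots\}$ with (say) $I_\infty = \{i_1 < i_2 < \cdots\}$ infinite; then $(D_{i_k})_{k \ge 1}$ lies in $\Delta^\Nat$ and, since $x \in \Nil(\Delta)$, some $m$ satisfies $(D_{i_m} \circ \cdots \circ D_{i_1})(x) = 0$. But this composition equals $D_{I_{i_m}}(x)$ because $I_\infty \cap \{1, \ldots, i_m\} = \{i_1, \ldots, i_m\}$, contradicting the nonvanishing required at the vertex $(I_{i_m}, J_{i_m})$. Thus Thm \ref{0veohtg09ernd0} makes $T$ finite, and taking $N$ larger than the depth of every vertex forces every summand of the Leibniz expansion of $(D_N \circ \cdots \circ D_1)(xy)$ to vanish. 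The main obstacle is precisely this tree construction---the monotonicity ensuring $T$ is closed under parents, and the bookkeeping identity $I_\infty \cap \{1, \ldots, i_m\} = \{i_1, \ldots, i_m\}$ that cashes an infinite branch against the hypothesis $x \in \Nil(\Delta)$.
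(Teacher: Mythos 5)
Your proof is correct, and for the hard half --- closure of $\Nil(\Delta)$ under multiplication --- it takes a genuinely different route from the paper's. Both arguments start from the generalized Leibniz expansion (the paper's Lemma \ref{823y292ehfwej}), but the paper then introduces the partial order $\preceq$ on $\powerfin(\Nat)$ and a standalone combinatorial lemma (Lemma \ref{9237e8di9j3w8}, proved by repeatedly extracting nested infinite index sets $Z_0 \supseteq Z_1 \supseteq \cdots$) applied to $X = X_x \cup X_y$ with $X_x = \setspec{I}{D_I(x) \neq 0}$, concluding that the set of ``bad'' lengths $n$ is finite. You instead run K\"onig's lemma (Thm \ref{0veohtg09ernd0}) directly on the tree of surviving ordered partitions $(I,J)$ of initial segments, with the hypothesis $x,y \in \Nil(\Delta)$ cashed against any infinite branch via the identity $I_\infty \cap \{1,\dots,i_m\} = \{i_1,\dots,i_m\}$; your verification that the tree is closed under passage to parents (using that the largest index can be peeled off the left of $D_I$) is exactly the point that makes this work. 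Your version is shorter and avoids the $\preceq$ machinery entirely; what the paper's route buys is only the reusable Lemma \ref{9237e8di9j3w8}, which is not invoked elsewhere, so the trade favors you here. For part (b) you also diverge: the paper inducts on $\deg_\Delta(x)+\deg_\Delta(y)$ using only the first-order Leibniz rule together with parts (d) and (e) of Lemma \ref{kjcoo93edg98uwb}, whereas your pigeonhole on the full Leibniz expansion ($|I|+|J|=n>p+q$ forces $|I|>p$ or $|J|>q$, hence $D_I(x)=0$ or $D_J(y)=0$) gives the inequality in one stroke and yields the closure of $\UNil(\Delta)$ as a byproduct, just as the paper's Cor.\ \ref{pc90b345tf7w938nf} does. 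The only items to add for completeness are the degenerate cases ($\Delta=\emptyset$, or $x=0$ or $y=0$, in which the root of your tree fails to be a vertex); all are trivial.
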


We already know that $\Nil(\Delta)$ and $\UNil(\Delta)$ are linear subspaces of $A$.
So, to prove (a), it suffices to show that those two sets are closed under the multiplication of $A$. 

\begin{Remark} \label {0f9345eo78u7884r}
If $\bk \subseteq A$ then  $\bk \subseteq \UNil(\Delta) \subseteq\Nil(\Delta)$, because $\bk \subseteq \ker D$ for all $D \in \Der_\bk(A)$.
\end{Remark}

Until the end of the section, we assume that $(A,\cdot)$ is a $\bk$-algebra.
The Theorem follows from Lemmas \ref{09fo1i2wdjosd} and \ref{93rrf03bfv0d} and Cor.\ \ref{pc90b345tf7w938nf}.

\begin{notation}
If $n \in \Nat$, we write $\II_n = \setspec{i \in \Nat}{ 0 \le i \le n }$.
Let $n \in \Nat$ and $F_0, \dots, F_n \in \Leul_\bk(A)$.
If $I = \{ i_1, \dots, i_m \}$ is a subset of $\II_n$ and $i_1 < \cdots < i_m$, we write
$F_I = F_{i_m} \circ \cdots \circ F_{i_1}$ (if $I = \emptyset$ then $F_I = \id_A$).
If $S$ is a set then $\powerset(S)$ denotes the set of all subsets of $S$.
\end{notation}

\begin{lemma} \label {823y292ehfwej}
Let $x,y \in A$, $n \in \Nat$  and $D_0, \dots, D_n \in \Der_\bk(A)$.  Then
$$
(D_n \circ \cdots \circ D_0)(x\cdot y) = \sum_{I \in \smallpowerset(\II_n)} D_I(x) \cdot  D_{\II_n\setminus I}(y) .
$$
\end{lemma}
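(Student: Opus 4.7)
The plan is to prove the identity by induction on $n$, treating it as a generalized Leibniz rule for compositions of derivations.

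The base case $n = 0$ is exactly the defining property of a derivation: $D_0(x \cdot y) = D_0(x) \cdot y + x \cdot D_0(y)$, which matches the right-hand side since $\powerset(\II_0) = \{\emptyset, \{0\}\}$ gives the two terms $D_\emptyset(x) \cdot D_{\{0\}}(y) = x \cdot D_0(y)$ and $D_{\{0\}}(x) \cdot D_\emptyset(y) = D_0(x) \cdot y$.

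For the inductive step, assume the formula for $n$ and apply $D_{n+1}$ to both sides. Using $\bk$-linearity of $D_{n+1}$ together with the Leibniz rule on each summand yields
\begin{equation*}
(D_{n+1} \circ \cdots \circ D_0)(x \cdot y) = \sum_{I \in \smallpowerset(\II_n)} \Bigl( (D_{n+1} \circ D_I)(x) \cdot D_{\II_n \setminus I}(y) + D_I(x) \cdot (D_{n+1} \circ D_{\II_n \setminus I})(y) \Bigr).
\end{equation*}
The key combinatorial step is to reindex this sum by subsets $J \in \powerset(\II_{n+1})$: every such $J$ either contains $n+1$ or not. If $n+1 \in J$, writing $J = I \cup \{n+1\}$ with $I \subseteq \II_n$, the fact that $n+1$ is larger than every element of $I$ means $D_J = D_{n+1} \circ D_I$, while $\II_{n+1} \setminus J = \II_n \setminus I$; this accounts for the first family of terms. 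If $n+1 \notin J$, then $J \subseteq \II_n$ and $\II_{n+1} \setminus J = (\II_n \setminus J) \cup \{n+1\}$, so $D_{\II_{n+1} \setminus J} = D_{n+1} \circ D_{\II_n \setminus J}$, which accounts for the second family. Thus the sum rewrites as $\sum_{J \in \smallpowerset(\II_{n+1})} D_J(x) \cdot D_{\II_{n+1} \setminus J}(y)$, completing the induction.

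The only delicate point is the bookkeeping of the convention $F_I = F_{i_m} \circ \cdots \circ F_{i_1}$ for $i_1 < \cdots < i_m$: one must verify that when $I \subseteq \II_n$, precomposing with $D_{n+1}$ (which has the largest index) is the same as taking $F_{I \cup \{n+1\}}$, and similarly for $\II_n \setminus I$. This is immediate from the definition, but it is where the proof would break down if the indices were not ordered. No other obstacle arises.
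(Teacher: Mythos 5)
Your proof is correct and follows exactly the route the paper indicates: the paper's own "proof" simply states that the identity follows by induction on $n$ and leaves the details to the reader, and your write-up supplies precisely that induction, with the base case being the Leibniz rule and the inductive step being the reindexing of $\powerset(\II_{n+1})$ according to whether $n+1$ belongs to the subset. The attention you pay to the ordering convention in $D_I$ (so that $D_{I\cup\{n+1\}} = D_{n+1}\circ D_I$) is exactly the point that makes the bookkeeping work.
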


\begin{proof}
This can be proved by induction on $n$.
The straightforward argument is left to the reader.
\qed\end{proof}

\begin{Definition}
We define a partial order $\preceq$ on the set $\powerfin(\Nat)$ of finite subsets of $\Nat$
by declaring that the condition $I \preceq J$ (where $I,J \in \powerfin(\Nat)$) is equivalent to:
\begin{center}
$I \subseteq J$ and for every $i \in I$ and $j \in J \setminus I$ we have $i<j$.
\end{center}
Note in particular that $\emptyset \preceq J$ for all $J \in \powerfin(\Nat)$.
Also observe that if $J \in \powerfin(\Nat)$ and $m \in \Nat$ then $\II_m \cap J \preceq J$.
\end{Definition}

\begin{lemma} \label {9237e8di9j3w8}
Let $X$ be a subset of $\powerfin(\Nat)$ such that 
\begin{itemize}

\item for all $I,J \in \powerfin(\Nat)$ satisfying $I \preceq J$ and $J \in X$, we have $I \in X$;

\item $(X,\preceq)$ satisfies the ascending chain condition (ACC).

\end{itemize}
Then the set
$$
Z = \setspec{ n \in \Nat }{
\text{there exists a subset $I$ of $\II_n$ such that $I \in X$ and $\II_n \setminus I \in X$} }
$$
is finite.
\end{lemma}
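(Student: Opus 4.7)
The plan is to argue by contraposition: assuming $Z$ is infinite I will produce an infinite strictly ascending chain in $(X, \preceq)$, contradicting ACC. The tool is a K\"{o}nig-type argument via Thm \ref{0veohtg09ernd0}, applied to a binary tree recording the ways of partitioning initial segments of $\Nat$ into two members of $X$.

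If $X = \emptyset$ there is nothing to prove, so assume $X \ne \emptyset$; then $\emptyset \in X$ because $\emptyset \preceq J$ for any $J \in X$. I define a directed graph $(S, E)$ whose vertices are the pairs $(I, J) \in X \times X$ with $I \sqcup J = \II_{n-1}$ for some $n \in \Nat$ (using the convention $\II_{-1} = \emptyset$); this $n$ is the depth of $(I, J)$. An edge goes from a depth-$n$ vertex $(I, J)$ to a depth-$(n+1)$ vertex of the form $(I \cup \{n\}, J)$ or $(I, J \cup \{n\})$. Downward closure of $X$ ensures $(S, E)$ is a directed tree rooted at $(\emptyset, \emptyset)$: given any $(I, J) \in S$ of depth $n \ge 1$, the element $n-1$ is the maximum of whichever of $I, J$ contains it, and removing it produces a depth-$(n-1)$ vertex in $S$ (since $I \setminus \{n-1\} \preceq I \in X$, and similarly for $J$); iterating reaches the root.

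Each vertex of $S$ has at most two children, so the finite-branching hypothesis of Thm \ref{0veohtg09ernd0} holds. By construction, $n \in Z$ if and only if $S$ has a vertex of depth $n+1$; hence $Z$ infinite forces $S$ infinite, and the contrapositive of Thm \ref{0veohtg09ernd0} yields an infinite path $(\emptyset, \emptyset) = (I_0, J_0), (I_1, J_1), (I_2, J_2), \dots$ issuing from the root. Setting $I_\infty = \bigcup_n I_n$ and $J_\infty = \bigcup_n J_n$ gives a partition $\Nat = I_\infty \sqcup J_\infty$; since $|I_n| + |J_n| = n$ is unbounded, one of these, say $I_\infty$, is infinite. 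Then $I_0 \preceq I_1 \preceq I_2 \preceq \cdots$ is a chain in $X$ that is not eventually constant, and extracting a strictly ascending subchain contradicts ACC.

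The main subtlety is arranging the tree so that downward closure of $X$ forces every vertex to be joined to the root by a path within $S$; once that is in place, the finite-branching property and the translation between infinite paths and strict chains in $(X, \preceq)$ are routine. Note also that the two hypotheses play cleanly complementary roles: downward closure is the structural input used to build the tree, while ACC is invoked only at the very end to preclude the infinite path produced by Thm \ref{0veohtg09ernd0}.
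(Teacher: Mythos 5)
Your proof is correct. The verifications all check out: the parent of a depth-$n$ vertex $(I,J)$ (remove $n-1$ from whichever component contains it) stays in $S$ precisely because $I\setminus\{n-1\}\preceq I$ and downward closure applies; the correspondence $n\in Z \Leftrightarrow$ ($S$ has a depth-$(n{+}1)$ vertex) holds because the ancestors $(\II_{m-1}\cap I,\ \II_{m-1}\setminus I)$ of a witness $(I,\II_n\setminus I)$ are again in $X\times X$ by downward closure; and each step of the infinite path appends an element larger than everything already present, so the component chains are $\preceq$-ascending. Where you differ from the paper: the paper does not use Thm \ref{0veohtg09ernd0} here at all. Instead it fixes a witness $I_n$ for each $n\in Z$ and runs a hand-rolled diagonal extraction (nested infinite subsets $Z_0\supseteq Z_1\supseteq\cdots$ along which the initial segments $\II_j\cap I_n$ stabilize), producing two sequences $B_j$ and $B_j^*$ that play exactly the role of your $I_j$ and $J_j$; it then applies ACC to both and derives the contradiction from $B_j\cup B_j^*=\II_j$. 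Your route externalizes the compactness step to the already-available K\"onig-type theorem, which shortens the extraction and also lets you finish more economically by applying ACC to only one chain (the one whose union is infinite). The combinatorial content is the same, but reusing Thm \ref{0veohtg09ernd0} is arguably cleaner given that the paper has already set it up; the paper's version has the mild advantage of not needing to verify the directed-tree axioms for a new graph. One small point you gloss over: uniqueness of the path from the root to a given vertex (needed for $(S,E)$ to be a directed tree in the paper's sense) should be noted, though it is immediate since each vertex has exactly one incoming edge.
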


\begin{proof}
By contradiction, assume that $Z$ is an infinite set.
For each $n \in Z$, choose $I_n \in \powerset(\II_n)$ such that $I_n \in X$ and $\II_n \setminus I_n \in X$.
We inductively define an infinite sequence of sets $Z_0, Z_1, \dots$.
Choose an infinite subset $Z_0$ of $Z$ such that
$\big(\II_0 \cap I_n\big)_{n \in Z_0}$ is a constant sequence.
Let $m \in \Nat$ and suppose that $Z_0, \dots, Z_m$ satisfy:
\begin{itemize}

\item $Z \supseteq Z_0 \supseteq Z_1 \supseteq \cdots \supseteq Z_m$ are infinite sets;

\item  for each $j \in \{0, \dots, m\}$, 
$\min Z_j \ge j$ and $\big(\II_j \cap I_n\big)_{n \in Z_j}$ is a constant sequence.

\end{itemize}
Since  $\big(\II_{m+1} \cap I_n\big)_{n \in Z_m}$ is an infinite sequence of elements of the finite set 
$\powerset(\II_{m+1})$, we may choose an infinite subset $Z_{m+1}$ of $Z_m$ such that
$\big(\II_{m+1} \cap I_n\big)_{n \in Z_{m+1}}$ is a constant sequence and $\min Z_{m+1} \ge m+1$.
By induction, it follows that there exists an infinite sequence $Z_0, Z_1, Z_2, \dots$ satisfying:
\begin{enumerate}

\item[(i)] $Z \supseteq Z_0 \supseteq Z_1 \supseteq Z_2 \supseteq \cdots$ are infinite sets;
\item[(ii)]  for each $j \in \Nat$,
$\min Z_j \ge j$ and $\big(\II_j \cap I_n\big)_{n \in Z_j}$ is a constant sequence.

\end{enumerate}
For each $j \in \Nat$, define $B_j = \II_j \cap I_n$ for any $n \in Z_j$,
and let $B_j^*  = \II_j \setminus B_j$.
We shall now prove that the two sequences $\big( B_j \big)_{j \in \Nat}$ and $\big( B_j^*  \big)_{j \in \Nat}$ 
eventually stabilize; this is absurd, because $B_j \cup B_j^*  = \II_j$.

Let $j \in \Nat$.  Given any $n \in Z_{j}$ we have $B_j = \II_j \cap I_n \preceq I_n$ and $I_n \in X$,
so $B_j \in X$. Also, $\II_j \cap (\II_n \setminus I_n) \preceq \II_n \setminus I_n \in X$ implies
$\II_j \cap (\II_n \setminus I_n) \in X$; since
$\II_j \cap (\II_n \setminus I_n)
= (\II_j \cap \II_n)  \setminus (\II_j \cap I_n)
= \II_j \setminus B_j = B_j^* $
(where we used $n \in Z_j \Rightarrow n \ge j \Rightarrow \II_j \cap \II_n = \II_j$),
we have $B_j^*  \in X$. Thus
$\big( B_j \big)_{j \in \Nat}$ and $\big( B_j^*  \big)_{j \in \Nat}$ are sequences in $X$.

Again, let $j \in \Nat$.  Pick $n \in Z_{j+1}$ (so $n \in Z_j$), then
$\II_j \cap B_{j+1} = \II_j \cap (\II_{j+1} \cap I_n) = \II_j \cap I_n = B_j$, so $B_j \preceq B_{j+1}$.
Thus $B_0 \preceq B_1 \preceq B_2 \preceq \cdots$, and since $(X,\preceq)$ satisfies ACC
there exists $M \in \Nat$ such that  $\big( B_j \big)_{j \ge M}$ is a constant sequence.
For $j \ge M$ we have $B_j = B_{j+1}=B_M$, so
$\II_j \cap B_{j+1}^* 
= \II_j \cap (\II_{j+1} \setminus B_{j+1})
= \II_j \cap (\II_{j+1} \setminus B_M)
= \II_j \setminus B_M
= \II_j \setminus B_j = B_j^* $, so $B_j^*  \preceq B_{j+1}^* $.
Thus $B_M^*  \preceq B_{M+1}^*  \preceq \cdots$, so the sequence $\big( B_j^*  \big)_{j \ge M}$ stabilizes.
As we already explained, this is absurd.
\qed\end{proof}

Recall that $A$ is an algebra over a field $\bk$.

\begin{lemma}  \label {09fo1i2wdjosd}
For every subset $\Delta$ of $\Der_\bk(A)$, $\Nil(\Delta)$ is closed under the multiplication of $A$.
\end{lemma}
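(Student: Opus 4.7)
The plan is to fix $x, y \in \Nil(\Delta)$ (assuming $\Delta \neq \emptyset$, since the empty case is trivial) and an arbitrary sequence $(D_0,D_1,\dots) \in \Delta^\Nat$, and produce an $n$ with $(D_n \circ \cdots \circ D_0)(x \cdot y) = 0$. The starting point is Lemma \ref{823y292ehfwej}, which expands $(D_n \circ \cdots \circ D_0)(x \cdot y)$ as $\sum_{I \subseteq \II_n} D_I(x) \cdot D_{\II_n \setminus I}(y)$. So it suffices to find $n$ at which every term of this sum vanishes, and this is exactly the type of statement that Lemma \ref{9237e8di9j3w8} was built for.

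Concretely, I would introduce the two sets $X_x = \{ I \in \powerfin(\Nat) : D_I(x) \neq 0 \}$ and $X_y = \{ I \in \powerfin(\Nat) : D_I(y) \neq 0 \}$, and work with their union $X = X_x \cup X_y$. The first hypothesis of Lemma \ref{9237e8di9j3w8} (downward closure under $\preceq$) follows from the simple observation that if $I \preceq J$, say $J = \{j_1 < \cdots < j_k\}$ and $I = \{j_1,\dots,j_l\}$, then $D_J$ factors as $(D_{j_k} \circ \cdots \circ D_{j_{l+1}}) \circ D_I$; hence $D_I(z) = 0$ forces $D_J(z) = 0$ for any $z \in A$, so each of $X_x$ and $X_y$ (hence $X$) is $\preceq$-downward closed.

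The main content, and the only place the hypothesis $x, y \in \Nil(\Delta)$ is used, is the ACC for $(X, \preceq)$. If there were an infinite strict chain $I_0 \prec I_1 \prec \cdots$ in $X$, by pigeonhole a cofinal subchain would lie entirely in one of $X_x$ or $X_y$, say in $X_x$. Such a chain corresponds to a strictly increasing sequence of indices $i_0 < i_1 < i_2 < \cdots$ with $\{i_0,\dots,i_l\} \in X_x$ for every $l$, i.e.\ $(D_{i_l} \circ \cdots \circ D_{i_0})(x) \neq 0$ for every $l$. But $(D_{i_0}, D_{i_1}, \dots) \in \Delta^\Nat$, so $x \in \Nil(\Delta)$ supplies some $l$ killing $x$, a contradiction. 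This step is the technical heart of the argument.

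With both hypotheses verified, Lemma \ref{9237e8di9j3w8} provides a finite set $Z \subseteq \Nat$ outside of which no $I \subseteq \II_n$ satisfies both $I \in X$ and $\II_n \setminus I \in X$. Pick any $n \notin Z$ (such $n$ exists) and any $I \subseteq \II_n$: either $I \notin X$, which forces $I \notin X_x$ and thus $D_I(x) = 0$, or $\II_n \setminus I \notin X$, which forces $\II_n \setminus I \notin X_y$ and thus $D_{\II_n \setminus I}(y) = 0$. In either case the corresponding term $D_I(x) \cdot D_{\II_n \setminus I}(y)$ vanishes, so by Lemma \ref{823y292ehfwej} we get $(D_n \circ \cdots \circ D_0)(x \cdot y) = 0$. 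Since the sequence $(D_0, D_1, \dots) \in \Delta^\Nat$ was arbitrary, this shows $x \cdot y \in \Nil(\Delta)$, completing the proof.
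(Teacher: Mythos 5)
Your proof is correct and follows essentially the same route as the paper's: the same sets $X_x$, $X_y$, $X = X_x \cup X_y$, the same verification of downward closure and ACC (with the contradiction to $x \in \Nil(\Delta)$ coming from the increasing enumeration of the union of a non-stabilizing chain), and the same application of Lemmas \ref{823y292ehfwej} and \ref{9237e8di9j3w8} to kill every term of the Leibniz expansion. The only cosmetic difference is that you establish ACC for the union $X$ directly via a pigeonhole on a cofinal subchain, whereas the paper checks ACC for $X_x$ and $X_y$ separately and then passes to the union.
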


\begin{proof}
We may assume that $\Delta \neq \emptyset$.
Let $x,y \in \Nil(\Delta)$ and $(D_0, D_1, \dots ) \in \Delta^\Nat$.
To prove the Lemma, it suffices to show that there exists $n \in \Nat$ such that $(D_n \circ \cdots \circ D_0)(x\cdot y)=0$.

Define $X_x = \setspec{ I \in \powerfin(\Nat) }{ D_I(x) \neq 0 }$. Let us prove:
\begin{enumerate}

\item[(i)] for all $I,J \in \powerfin(\Nat)$ satisfying $I \preceq J$ and $J \in X_x$, we have $I \in X_x$;

\item[(ii)] $(X_x , \preceq)$ satisfies the ACC.

\end{enumerate}
Consider $I,J \in \powerfin(\Nat)$ satisfying $I \preceq J$ and $J \in X_x$.
Then $0 \neq D_J(x) = (D_{J\setminus I} \circ D_I)(x)$, 
so $D_I(x) \neq 0$, so $I \in X_x$.  This proves (i).

Consider an infinite sequence $I_0 \preceq I_1 \preceq \cdots$ in $X_x$. By contradiction, assume that
$\big( I_n \big)_{n \in \Nat}$ does not stabilize. Then $I = \bigcup_{n \in \Nat} I_n$ is  an infinite subset of $\Nat$.
Let $i_0 < i_1 < i_2 < \cdots$ be the elements of $I$
and consider $(D_{i_0}, D_{i_1}, \dots) \in \Delta^\Nat$.
Given any $k \in \Nat$ there exists $n \in \Nat$ such that $\{ i_0 , i_1 , \dots, i_k \} \subseteq I_n$;
then  $\{ i_0 , i_1 , \dots, i_k \} \preceq I_n \in X_x$, so 
$\{ i_0 , i_1 , \dots, i_k \} \in X_x$ by (i), so $(D_{i_k} \circ \cdots \circ D_{i_0})(x) \neq 0$.
Since this holds for all $k \in \Nat$, we have $x \notin \Nil(\Delta)$, a contradiction.
So (ii) is proved.

Let $X_y = \setspec{ I \in \powerfin(\Nat) }{ D_I(y) \neq 0 }$; then it is clear that (i) and (ii) are true with ``$X_y$'' in place of ``$X_x$''.
Consequently, if we define $X = X_x \cup X_y$ then:
\begin{enumerate}

\item[(iii)] for all $I,J \in \powerfin(\Nat)$ satisfying $I \preceq J$ and $J \in X$, we have $I \in X$;

\item[(iv)] $(X , \preceq)$ satisfies the ACC.

\end{enumerate}
By (iii), (iv) and Lemma \ref{9237e8di9j3w8}, the set
$$
Z =\setspec{ n \in \Nat }{
\text{there exists a subset $I$ of $\II_n$ such that $I \in X$ and $\II_n \setminus I \in X$} }
$$
is finite. Choose $n \in \Nat \setminus Z$. Then for each 
subset $I$ of $\II_n$ we have $I \notin X$ (i.e., $D_I(x)=0=D_I(y)$) or $\II_n \setminus I \notin X$
(i.e., $D_{\II_n\setminus I}(x)=0=D_{\II_n\setminus I}(y)$).
So all terms are zero in the right-hand-side of
$$
(D_n \circ \cdots \circ D_0)(x\cdot y) = \sum_{I \in \smallpowerset(\II_n)} D_I(x) \cdot  D_{\II_n\setminus I}(y) 
$$
(the equality follows from Lemma \ref{823y292ehfwej}).
Thus $(D_n \circ \cdots \circ D_0)(x\cdot y) = 0$, as desired.
\qed\end{proof}

\begin{lemma}  \label {93rrf03bfv0d}
Let $\Delta$ be a subset of $\Der_\bk(A)$. Then the map $\deg_\Delta : A \to \Nat \cup \{ \pm\infty \}$ satisfies
$\deg_\Delta(x\cdot y) \le \deg_\Delta(x) + \deg_\Delta(y)$ for all $x,y \in \UNil(\Delta)$.
\end{lemma}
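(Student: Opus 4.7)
The plan is to reduce everything to the generalized Leibniz formula already proved in Lemma~\ref{823y292ehfwej}, combined with a pigeonhole on the cardinalities of the subsets $I$ and $\II_n \setminus I$ that appear in that formula.

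First, I would dispose of the degenerate cases. If $x = 0$ or $y = 0$, then $x \cdot y = 0$ and $\deg_\Delta(x \cdot y) = -\infty$, which is trivially $\le \deg_\Delta(x) + \deg_\Delta(y)$ under the convention $-\infty + r = -\infty$. The case $\Delta = \emptyset$ is also immediate from the definition. So I may assume $\Delta \neq \emptyset$ and $x, y \in \UNil(\Delta) \setminus \{0\}$, and set $p = \deg_\Delta(x)$ and $q = \deg_\Delta(y)$, both nonnegative integers. The goal becomes: for every $(D_0, \dots, D_{p+q}) \in \Delta^{p+q+1}$,
$$
(D_{p+q} \circ \cdots \circ D_0)(x \cdot y) = 0.
$$

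To prove this, I would set $n = p + q$ and apply Lemma~\ref{823y292ehfwej} to obtain
$$
(D_n \circ \cdots \circ D_0)(x \cdot y) = \sum_{I \in \smallpowerset(\II_n)} D_I(x) \cdot D_{\II_n \setminus I}(y).
$$
For each $I \subseteq \II_n$, the two cardinalities satisfy $|I| + |\II_n \setminus I| = n + 1 = p + q + 1$, so at least one of the inequalities $|I| \ge p+1$ or $|\II_n \setminus I| \ge q+1$ must hold. In the first case, $D_I(x)$ is the composition of $|I| \ge p+1$ elements of $\Delta$ applied to $x$, which vanishes because $\deg_\Delta(x) = p$; in the second case, $D_{\II_n \setminus I}(y) = 0$ for the analogous reason. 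Either way the corresponding term is zero, so the whole sum vanishes, giving $\deg_\Delta(x \cdot y) \le n = \deg_\Delta(x) + \deg_\Delta(y)$.

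There is no real obstacle here: the Leibniz-type identity of Lemma~\ref{823y292ehfwej} does all the heavy lifting, and the pigeonhole on $|I|$ versus $|\II_n \setminus I|$ is the standard trick that makes the degree function of a single derivation subadditive on products. The only minor care needed is to keep track of the convention at $0$ (namely $\deg_\Delta(x) = 0$ precisely when $x \neq 0$ and $F(x) = 0$ for all $F \in \Delta$, so that $D_I(x) = 0$ as soon as $|I| \ge 1$), which is compatible with the inequality above.
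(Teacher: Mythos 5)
Your argument is correct, but it takes a different route from the paper's. The paper proves Lemma~\ref{93rrf03bfv0d} by induction on $n$, where $P(n)$ asserts that $\deg_\Delta(x)+\deg_\Delta(y)\le n$ implies $\deg_\Delta(x\cdot y)\le n$; the inductive step uses only the one-step Leibniz rule $D_1(x\cdot y)=D_1(x)\cdot y+x\cdot D_1(y)$ together with parts (d) and (e) of Lemma~\ref{kjcoo93edg98uwb}, and never invokes the expanded formula. You instead apply the generalized Leibniz identity of Lemma~\ref{823y292ehfwej} directly with $n=p+q$ and kill each term $D_I(x)\cdot D_{\II_n\setminus I}(y)$ by the pigeonhole $|I|+|\II_n\setminus I|=p+q+1$, so that either $|I|>\deg_\Delta(x)$ or $|\II_n\setminus I|>\deg_\Delta(y)$. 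This is sound: the definition of $\deg_\Delta$ as a maximum does guarantee that any composition of more than $\deg_\Delta(x)$ elements of $\Delta$ annihilates $x$, the case $I=\emptyset$ (where $D_I=\id$) is handled because then $|\II_n\setminus I|=p+q+1\ge q+1$, and showing that all compositions of length $p+q+1$ kill $x\cdot y$ simultaneously places $x\cdot y$ in $\UNil(\Delta)$ and bounds its degree. Your route is shorter given that Lemma~\ref{823y292ehfwej} is already established (the paper proves it for use in Lemma~\ref{09fo1i2wdjosd}), and it makes the quantitative mechanism more transparent; the paper's induction buys a self-contained argument that needs nothing beyond the elementary properties of $\deg_\Delta$ and the basic derivation identity.
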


\begin{proof}
The case $\Delta = \emptyset$ is trivial, so let us assume that $\Delta \neq \emptyset$.
It suffices to show that $P(n)$ is true for all $n \in \Nat$, where we define
\begin{equation*}
P(n): \qquad \forall_{x,y \in A\setminus\{0\}}\ 
\big(  \deg_\Delta(x) + \deg_\Delta(y) \le n \implies \deg_\Delta(x\cdot y) \le n  \big).
\end{equation*}

Suppose that $x,y \in A\setminus\{0\}$  satisfy $\deg_\Delta(x) + \deg_\Delta(y) \le 0$.
Then  $\deg_\Delta(x) = 0 = \deg_\Delta(y)$, so for all $D \in \Delta$ we have $D(x)=0=D(y)$ and hence $D(x\cdot y) = D(x)\cdot y + x\cdot D(y) = 0$,
so $\deg_\Delta(x\cdot y) \le 0$, showing that $P(0)$ is true.

Let $n \in \Nat$ and assume that $P(n)$ is true.
To prove that $P(n+1)$ is true, consider $x,y \in A\setminus\{0\}$  satisfying
$\deg_\Delta(x) + \deg_\Delta(y) \le n+1$.
We have to show that $\deg_\Delta(x\cdot y) \le n+1$.
To do this, it suffices to show that $(D_{n+2} \circ \cdots \circ D_1)(x\cdot y)=0$ for all $(D_1, \dots, D_{n+2}) \in \Delta^{n+2}$.
So let us consider  $(D_1, \dots, D_{n+2}) \in \Delta^{n+2}$.
As a first step, we claim that 
\begin{equation}  \label {iIx9c23rg093n629kjkEews4I}
\deg_\Delta( D_1(x) \cdot  y ) \le n \quad \text{and} \quad \deg_\Delta( x \cdot  D_1(y) ) \le n .
\end{equation}
To see this, first note that
if $D_1(x) = 0$ then $\deg_\Delta( D_1(x) \cdot  y ) = -\infty \le n$;
so, to prove the first part of \eqref{iIx9c23rg093n629kjkEews4I}, we may assume that $D_1(x) \neq 0$.
Note that $\deg_\Delta( D_1(x) ) < \deg_\Delta(x)$ by part (d) of Lemma \ref{kjcoo93edg98uwb},
so $\deg_\Delta( D_1(x) ) + \deg_\Delta(y) \le n$; since both $D_1(x)$ and $y$ belong to $A\setminus\{0\}$
and $P(n)$ is true, we get $\deg_\Delta( D_1(x) \cdot  y ) \le n$.
By the same argument we have $\deg_\Delta( x \cdot  D_1(y) ) \le n$, so \eqref{iIx9c23rg093n629kjkEews4I} is proved.
Then
\begin{align} \label {0c9v2b3498fvb0}
\deg_\Delta( D_1(x\cdot y) )
&= \deg_\Delta( D_1(x) \cdot  y + x \cdot  D_1(y) ) \\
\notag &\le \max \big( \deg_\Delta( D_1(x) \cdot  y ) ,  \deg_\Delta( x \cdot  D_1(y) ) \big) \le n
\end{align}
by part (e) of Lemma \ref{kjcoo93edg98uwb} and \eqref{iIx9c23rg093n629kjkEews4I}.
Now \eqref{0c9v2b3498fvb0} implies that
$$
(D_{n+2} \circ \cdots \circ D_1)(x\cdot y) = (D_{n+2} \circ \cdots \circ D_2)(D_1(x\cdot y)) =0,
$$
which is the desired conclusion.  So $\deg_\Delta(x\cdot y) \le n+1$ and hence $P(n+1)$ is true.
\qed\end{proof}

\begin{corollary}  \label {pc90b345tf7w938nf}
Let $\Delta$ be a subset of $\Der_\bk(A)$. Then $\UNil(\Delta)$ is closed under the multiplication of~$A$.
\end{corollary}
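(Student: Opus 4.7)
The plan is to deduce this corollary directly from the degree-subadditivity statement of Lemma \ref{93rrf03bfv0d} together with the characterization of $\UNil(\Delta)$ in terms of $\deg_\Delta$ given by Lemma \ref{kjcoo93edg98uwb}(c). Since we already know from Def.\ \ref{0f9v358f6c7we9ww5gr} that $\UNil(\Delta)$ is a linear subspace of $A$, the only thing left to verify is that it is closed under the multiplication $\cdot$ of $A$.

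So, first I would take arbitrary $x,y \in \UNil(\Delta)$ and aim to show $x \cdot y \in \UNil(\Delta)$. The case $\Delta = \emptyset$ is immediate (then $\UNil(\Delta) = A$), so assume $\Delta \neq \emptyset$. If $x = 0$ or $y = 0$ then $x \cdot y = 0$, which lies in $\UNil(\Delta)$ trivially. In the remaining case, Lemma \ref{kjcoo93edg98uwb}(c) applied to $x$ and $y$ gives $\deg_\Delta(x), \deg_\Delta(y) \in \Nat$; since both degrees are finite, Lemma \ref{93rrf03bfv0d} yields
\[
\deg_\Delta(x \cdot y) \le \deg_\Delta(x) + \deg_\Delta(y) < \infty.
\]
Applying the other direction of Lemma \ref{kjcoo93edg98uwb}(c), this finiteness is exactly the statement that $x \cdot y \in \UNil(\Delta)$.

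There is no real obstacle here; the corollary is essentially a reformulation of Lemma \ref{93rrf03bfv0d}. The only minor point of care is the bookkeeping around the value $-\infty$ taken by $\deg_\Delta$ on $0$, which is handled by splitting off the zero case before invoking the subadditivity inequality (so as to keep both sides in $\Nat$ and avoid any ambiguity in the sum). Once that is done, Lemma \ref{kjcoo93edg98uwb}(c) translates finiteness of $\deg_\Delta$ back into membership in $\UNil(\Delta)$, finishing the proof.
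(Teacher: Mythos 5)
Your proof is correct and is exactly the argument the paper gives: the paper's own proof of this corollary is a one-line citation of Lemma \ref{93rrf03bfv0d} together with Lemma \ref{kjcoo93edg98uwb}(c), which you have simply written out in full (including the harmless zero-element bookkeeping). Nothing to add.
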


\begin{proof}
This follows from Lemma \ref{93rrf03bfv0d} and Part (c) of Lemma \ref{kjcoo93edg98uwb}.
\qed\end{proof}

This completes the proof of Thm \ref{cp09vv3b49fooOi1wqk498fb}.

\section{Lie-locally nilpotent sets of linear maps}
\label {SEC:LieLocallynilpotentsetsoflinearmaps}

Let $V$ be a vector space over a field $\bk$.

\begin{caution}
Let $x \in V$ and $F_0, \dots, F_{n+1} \in \Leul_\bk(V)$.
It is clear that 
$(F_n \circ \cdots \circ F_0)(x) = 0$ implies $(F_{n+1} \circ F_n \circ \cdots \circ F_0)(x) = 0$ 
and that $[F_n, \dots, F_0] = 0$ implies $[F_{n+1},F_n, \dots, F_0] = 0$,
but it is not the case that 
$[F_n, \dots, F_0](x) = 0$ implies $[F_{n+1},F_n, \dots, F_0](x) = 0$.
(This should be kept in mind while reading this Section.)
\end{caution}

\begin{Definition} \label {Lie0f9v358f6c7we9ww5gr}
Given a subset $\Delta$ of $\Leul_\bk(V)$, we define the subsets $\Nil^\LL(\Delta)$ and $\UNil^\LL(\Delta)$ of $V$ as follows.
If $\Delta=\emptyset$, we set $\Nil^\LL(\Delta)=V=\UNil^\LL(\Delta)$.
If $\Delta \neq \emptyset$,
\begin{itemize}

\item $\Nil^\LL(\Delta)$ is the set of all $x \in V$ satisfying:
\begin{quote}
for each $(F_0,F_1,\dots) \in \Delta^\Nat$ there exists $N \in \Nat$ such that for all $m\ge0$ and $(G_1,\dots,G_m) \in \Delta^m$
we have \text{$[G_m, \dots, G_1, F_N , \dots, F_0](x) =0$.}\footnote{We compute
$[G_m, \dots, G_1, F_N , \dots, F_0]$ in the Lie algebra $\Leul_\bk(V)_\LL$ (see \ref{p0vQ398vf83ms43rt7uxcy9w47F} and \ref{pa9enbr2o39efcp230wsd}).}
\end{quote}

\item $\UNil^\LL(\Delta)$ is the set of all $x \in V$ satisfying:
\begin{quote}
there exists $N \in \Nat$ such that for every $n \ge N$ and $(F_1,\dots,F_n) \in \Delta^n$ we have 
$[F_n, \dots, F_1](x) =0$.
\end{quote}

\end{itemize}
Note that $\UNil^\LL(\Delta) \subseteq \Nil^\LL(\Delta)$ are linear subspaces of $V$.
If $\Nil^\LL(\Delta) = V$, we say that the set $\Delta$ is \textit{Lie-locally nilpotent};
if $\UNil^\LL(\Delta) = V$, we say that $\Delta$ is \textit{uniformly Lie-locally nilpotent}.
\end{Definition}

\begin{Remark} \label {pckv349thndd8d539W}
The condition that $\Delta$ is a Lie-locally nilpotent (or a uniformly Lie-locally nilpotent) subset of $\Leul_\bk(V)$
does not imply that $\Delta \subseteq \locnil( \Leul_\bk V )$.
For instance, if $F \in \Leul_\bk(V) \setminus \locnil(\Leul_\bk V)$ then $\Delta=\{F\}$ is Lie-locally nilpotent.
\end{Remark}

\begin{Example}  \label {ckj029330fjbsxdkk56930mf}
In general there are no inclusion relations between $\Nil(\Delta)$ and $\Nil^\LL(\Delta)$.
The $\Delta$ of \ref{pckv349thndd8d539W} satisfies  $\Nil(\Delta) \nsupseteq \Nil^\LL(\Delta)$.
We now give an example satisfying  $\Nil(\Delta) \nsubseteq \Nil^\LL(\Delta)$.

Let  $(e_i)_{i \in \Nat}$ be a basis of a vector space $V$ over a field $\bk$.
Let $\Delta = \{ F_1, F_2, F_3, \dots \}$ where for each $n \ge 1$ we define the $\bk$-linear map $F_n : V \to V$ by
$$
\text{$F_n(e_i) = 0$ for all $i \in \{1, \dots, n\}$} \qquad \text{and} \qquad \text{$F_n(e_i) = e_n$ for all $i \in \Nat \setminus \{1, \dots, n\}$.}
$$
Then we have:
\begin{equation} \label {c0viuhh348d78bxcpwe9J}
F_n \circ F_m = 0 \quad \text{for all $m,n$ such that $m \le n$.}
\end{equation}

We claim that $\Nil(\Delta)=V$.
Indeed, consider a sequence $(F_{i_0}, F_{i_1}, F_{i_2}, \dots ) \in \Delta^\Nat$.
Then $(i_0,i_1,i_2,\dots)$ cannot be strictly decreasing, so there must exist $n\ge1$ such that $i_{n-1}\le i_n$.
Then $F_{i_n} \circ F_{i_{n-1}} = 0$ by \eqref{c0viuhh348d78bxcpwe9J}, so $F_{i_n} \circ \cdots \circ F_{i_0} = 0$.
This implies in particular that  $\Nil(\Delta)=V$.

Now consider the sequence $(F_1, F_2, F_3, \dots) \in \Delta^\Nat$.
Using \eqref{c0viuhh348d78bxcpwe9J}, it is easy to show by induction that
$[F_n, \dots, F_1] = (-1)^{n+1} F_1 \circ F_2 \circ \cdots F_n$ for all $n\ge1$.
Since $(F_1 \circ F_2 \circ \cdots F_n)(e_0) = e_1$ for all $n\ge1$, we have 
$[F_n, \dots, F_1](e_0) \neq 0$  for all $n\ge1$, so $e_0 \notin \Nil^\LL(\Delta)$.
So $\Nil(\Delta) \nsubseteq \Nil^\LL(\Delta)$.
\end{Example}

In contrast with Ex.\ \ref{ckj029330fjbsxdkk56930mf}, we have:

\begin{lemma} \label {p9vcuwbIhEj483u839}
For every subset $\Delta$ of $\Leul_\bk(V)$, we have $\UNil( \Delta ) \subseteq \UNil^\LL( \Delta )$.
\end{lemma}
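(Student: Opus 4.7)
The plan is to exploit the fact that, inside the associative algebra $\Leul_\bk(V)$, an iterated commutator $[F_n,\dots,F_1]$ (computed in the Lie algebra $\Leul_\bk(V)_\LL$) expands as an integer linear combination of compositions of permutations of the $F_i$'s:
\[
[F_n,\dots,F_1] \;=\; \sum_{\sigma} \epsilon_\sigma\, F_{\sigma(n)} \circ \cdots \circ F_{\sigma(1)},
\]
where $\sigma$ ranges over some finite collection of elements of the symmetric group $S_n$ and each $\epsilon_\sigma \in \{\pm1\}$. This is a routine induction on $n$ based on the identity $[F_{n+1},F_n,\dots,F_1] = F_{n+1}\circ[F_n,\dots,F_1] - [F_n,\dots,F_1]\circ F_{n+1}$, and it is also a direct consequence of Lemma~\ref{0vh349rf920}(a) applied to $A = \Leul_\bk(V)$ viewed as an associative algebra.

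Given this expansion, the containment is essentially immediate. Let $x \in \UNil(\Delta)$; we may assume $\Delta \neq \emptyset$, since otherwise the assertion is trivial. By definition of $\UNil(\Delta)$, there exists $N \in \Nat$ such that $(G_N \circ \cdots \circ G_1)(x) = 0$ for every $(G_1,\dots,G_N) \in \Delta^N$. I claim that the same $N$ witnesses $x \in \UNil^\LL(\Delta)$.

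To verify the claim, fix $n \ge N$ and $(F_1,\dots,F_n) \in \Delta^n$. Using the expansion above, each term in the sum for $[F_n,\dots,F_1]$ has the form $\epsilon_\sigma\, F_{\sigma(n)} \circ \cdots \circ F_{\sigma(1)}$ for some $\sigma \in S_n$; factor this composition as
\[
\bigl(F_{\sigma(n)} \circ \cdots \circ F_{\sigma(N+1)}\bigr) \circ \bigl(F_{\sigma(N)} \circ \cdots \circ F_{\sigma(1)}\bigr).
\]
The rightmost block has length $N$ and all its factors belong to $\Delta$, so by the choice of $N$ it kills $x$. Each summand therefore vanishes on $x$, and consequently $[F_n,\dots,F_1](x) = 0$. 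This gives $x \in \UNil^\LL(\Delta)$ and hence $\UNil(\Delta) \subseteq \UNil^\LL(\Delta)$.

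The only non-trivial ingredient is the commutator expansion, but this is elementary; there is no real obstacle. Note that the same argument fails for $\Nil$ in place of $\UNil$, which is consistent with Ex.~\ref{ckj029330fjbsxdkk56930mf}: the expansion of $[F_n,\dots,F_1]$ produces compositions of \emph{all} permutations of $(F_1,\dots,F_n)$, and without a uniform $N$ bounding all length-$N$ compositions, one cannot control every permuted string simultaneously from a single sequence witness for membership in $\Nil(\Delta)$.
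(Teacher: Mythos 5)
Your proof is correct and follows essentially the same route as the paper: both arguments note that the witness $N$ for $x\in\UNil(\Delta)$ forces every composition of $n\ge N$ elements of $\Delta$ to kill $x$ (by peeling off the rightmost length-$N$ block), and then expand $[F_n,\dots,F_1]$ as a $\bk$-linear combination of such compositions. The only cosmetic difference is that you make the permutation expansion explicit, while the paper simply invokes the containment of $[G_n,\dots,G_1]$ in $\Span_\bk\setspec{F_n\circ\cdots\circ F_1}{(F_1,\dots,F_n)\in\Delta^n}$.
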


\begin{proof}
Let $x \in \UNil( \Delta )$. 	
Then there exists $N\ge 1$ such that $(F_N \circ \cdots \circ F_1)(x) = 0$ for all $(F_1,\dots,F_N) \in \Delta^N$. So,
\begin{equation} \label {ocvihb394eowed-1}
(F_n \circ \cdots \circ F_1)(x) = 0 \quad \text{for all $n\ge N$ and all  $(F_1,\dots,F_n) \in \Delta^n$.}
\end{equation}
Now given any $n\ge N$ and $(G_1,\dots,G_n) \in \Delta^n$ we have 
\begin{equation*} 
[G_n, \dots, G_1] \in \Span_\bk \setspec{ F_n \circ \cdots \circ F_1 }{ (F_1,\dots,F_n) \in \Delta^n },
\end{equation*}
so $[G_n, \dots, G_1](x)=0$ by \eqref{ocvihb394eowed-1}.
This shows that $x \in \UNil^\LL( \Delta )$, as desired.
\qed\end{proof}

\begin{lemma} \label {Nil-0cccno12q9wdnp}
Let $\Delta$ be a finite subset of $\Leul_\bk(V)$. Then $\Nil^\LL(\Delta) = \UNil^\LL(\Delta)$.
In particular, $\Delta$ is Lie-locally nilpotent if and only if it is uniformly Lie-locally nilpotent.
\end{lemma}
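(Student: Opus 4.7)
The inclusion $\UNil^\LL(\Delta) \subseteq \Nil^\LL(\Delta)$ is part of Definition~\ref{Lie0f9v358f6c7we9ww5gr}, so the task is to prove the reverse inclusion $\Nil^\LL(\Delta) \subseteq \UNil^\LL(\Delta)$. The plan is to adapt the directed-tree argument of Lemma~\ref{0cccno12q9wdnp}, but to build the tree from the nested $G,F$-data appearing in the definition of $\Nil^\LL$ rather than from iterated compositions.

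Assuming $\Delta \neq \emptyset$, I fix $x \in \Nil^\LL(\Delta) \setminus \{0\}$ and let $S$ consist of the empty sequence $()$ together with all tuples $(F_0,\dots,F_n) \in \Delta^{n+1}$ (with $n \ge 0$) for which there exist $m \ge 0$ and $(G_1,\dots,G_m) \in \Delta^m$ with $[G_m,\dots,G_1,F_n,\dots,F_0](x) \neq 0$; when $m=0$ this reads $[F_n,\dots,F_0](x) \neq 0$. I place an edge from $(F_0,\dots,F_{n-1})$ to $(F_0,\dots,F_n)$ whenever both lie in $S$, and from $()$ to $(F_0)$ whenever $(F_0) \in S$. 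The key observation is that $S$ is closed under taking prefixes: any witness $[G_m,\dots,G_1,F_n,\dots,F_0](x) \neq 0$ may be re-read by absorbing $F_n$ as the new first element of the $G$-string, producing a witness that $(F_0,\dots,F_{n-1}) \in S$. Consequently $(S,E)$ is a directed tree rooted at $()$, each vertex has at most $|\Delta|<\infty$ children, and an infinite path from the root would yield $(F_0,F_1,\dots) \in \Delta^\Nat$ with every prefix in $S$, contradicting the $\Nil^\LL$-condition applied to this very sequence. Theorem~\ref{0veohtg09ernd0} then forces $S$ to be finite.

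Let $N_0$ be the maximum length of a tuple in $S$, and set $N = N_0+1$. For any $n \ge N$ and any $(F_1,\dots,F_n) \in \Delta^n$, the tuple $(F_1,F_2,\dots,F_n)$ has length exceeding $N_0$, so it is not in $S$; the $m=0$ clause of ``not in $S$'' then gives $[F_n,\dots,F_1](x) = 0$, which is precisely the condition placing $x$ in $\UNil^\LL(\Delta)$. The one point requiring genuine care is arranging the definition of $S$ so that prefix-closedness is visibly built in \emph{and} so that the $m=0$ instance of failure matches the Lie word used to define $\UNil^\LL$; once those conventions are in place the argument reduces to the same finite-branching, no-infinite-branch reasoning used in Lemma~\ref{0cccno12q9wdnp}.
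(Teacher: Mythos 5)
Your proof is correct and follows essentially the same route as the paper: the same vertex set $S$ of tuples admitting a nonzero witness $[G_m,\dots,G_1,F_n,\dots,F_0](x)$, the same edge relation, and the same appeal to Theorem~\ref{0veohtg09ernd0}. The only difference is that you make explicit two points the paper leaves to the reader (prefix-closedness of $S$ via absorbing $F_n$ into the $G$-string, and the final $m=0$ extraction), both of which are handled correctly.
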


\begin{proof}
It suffices to show that  $\Nil^\LL(\Delta) \subseteq \UNil^\LL(\Delta)$.
Let  $x \in \Nil^\LL(\Delta) \setminus \{0\}$.
Consider the set $S$ of finite sequences $(F_0,F_1 \dots, F_N)$ of elements of $\Delta$ 
for which there exist $m\ge0$ and $(G_1, \dots, G_m) \in \Delta^m$ satisfying $[G_m, \dots, G_1, F_N, \dots, F_0](x) \neq 0$.
Let $E \subseteq S \times S$ be the set of ordered pairs of the form
$$
\big( (F_0, \dots, F_n), (F_0, \dots, F_n, F_{n+1}) \big) \; .
$$
Then $(S,E)$ is a directed tree and the empty sequence $() \in S$ is the root of this tree (see \ref{09cn3049dEw}).
For each vertex $v \in S$, the number of $v'\in S$ satisfying $(v,v') \in E$ is at most $|\Delta|$, which is finite.
The fact that $x \in \Nil^\LL(\Delta)$ implies that there is no infinite path $\big( (), (F_0), (F_0,F_1), \dots \big)$ in this tree.
It follows from Thm \ref{0veohtg09ernd0} that $S$ is a finite set, and this implies that $x \in \UNil^\LL(\Delta)$.
\qed\end{proof}

\begin{proposition}  \label {Nil-d0c1i2eb9ew03j}
Let $\Delta$ be a subset of $\Leul_\bk(V)$
and let $\tilde\Delta$ be the Lie subalgebra of $\Leul_\bk(V)_\LL$ generated by $\Delta$.
Then the following hold.
\begin{enumerata}

\item Let $x \in V$. If $N$ is a positive integer satisfying
\begin{equation} \label {1ov9bn928bf90jehop}
[F_n, \dots, F_1](x) = 0 \quad \text{for all $n \ge N$ and all $(F_1,\dots,F_n) \in \Delta^n$},
\end{equation}
then the same $N$ satisfies
\begin{equation*} 
[F_n, \dots, F_1](x) = 0 \quad \text{for all $n \ge N$ and all $(F_1,\dots,F_n) \in \tilde\Delta^n$}.
\end{equation*}

\item $\UNil^\LL(\tilde\Delta) = \UNil^\LL(\Delta)$

\item If $\Delta$ is uniformly Lie-locally nilpotent then so is $\tilde\Delta$.

\end{enumerata}
\end{proposition}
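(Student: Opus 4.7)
The heart of the proposition is part (a); parts (b) and (c) will follow formally.

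For (a), fix $x$ and an integer $N$ satisfying \eqref{1ov9bn928bf90jehop} for $\Delta$, and consider arbitrary $n \ge N$ and $(F_1, \dots, F_n) \in \tilde\Delta^n$. The plan is to reduce the computation of $[F_n, \dots, F_1](x)$ to evaluating right-normed brackets in $\Delta$ of length at least $N$, so that the hypothesis applies. By Lemma \ref{0vh349rf920}(b) applied to the Lie algebra $\tilde\Delta$ with generating set $\Delta$, each $F_i$ is a $\bk$-linear combination of right-normed brackets $[H_{k}, \dots, H_0]$ with $H_j \in \Delta$ and $k \ge 0$. Expanding $[F_n, \dots, F_1]$ by multilinearity of the Lie bracket, it suffices to show that every iterated bracket of the form
\[
[[H_{n, k_n}, \dots, H_{n, 0}], \dots, [H_{1, k_1}, \dots, H_{1, 0}]]
\]
(with each $H_{i, j} \in \Delta$) annihilates $x$.

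Such an expression is a parenthesization, in the Lie algebra $\Leul_\bk(V)_\LL$, of the flat product of the $H_{i, j}$, which has total length $m = \sum_{i=1}^{n} (k_i + 1) \ge n \ge N$. By Lemma \ref{0vh349rf920}(a), this parenthesization equals a finite sum $\sum_\ell r_\ell [G_{\ell, m}, \dots, G_{\ell, 1}]$ where each $r_\ell$ lies in the prime subring of $\bk$ and each tuple $(G_{\ell, 1}, \dots, G_{\ell, m})$ is a permutation of the flat list of $H_{i, j}$'s; in particular every $G_{\ell, j}$ belongs to $\Delta$. Since $m \ge N$, the hypothesis \eqref{1ov9bn928bf90jehop} forces $[G_{\ell, m}, \dots, G_{\ell, 1}](x) = 0$ for every $\ell$, and therefore the whole bracket annihilates $x$. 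This proves (a).

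Part (b) is then a formal corollary: the inclusion $\UNil^\LL(\tilde\Delta) \subseteq \UNil^\LL(\Delta)$ is immediate from $\Delta \subseteq \tilde\Delta$, while the reverse inclusion is exactly the content of (a), since any $N$ certifying $x \in \UNil^\LL(\Delta)$ also certifies $x \in \UNil^\LL(\tilde\Delta)$. Part (c) follows at once from (b), because $\Delta$ (resp.\ $\tilde\Delta$) being uniformly Lie-locally nilpotent means $\UNil^\LL(\Delta) = V$ (resp.\ $\UNil^\LL(\tilde\Delta) = V$). The only real subtlety in the argument is ensuring that the length $m$ of the resulting right-normed brackets is at least $n$; this is automatic since each inner bracket has length $k_i + 1 \ge 1$ and there are $n$ of them. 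Everything else is bookkeeping and the two parts of Lemma \ref{0vh349rf920}.
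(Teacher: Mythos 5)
Your proof is correct and follows essentially the same route as the paper's: decompose each $F_i$ into a linear combination of right-normed brackets of elements of $\Delta$, expand by multilinearity, and use Lemma \ref{0vh349rf920}(a) to rewrite each resulting nested bracket as a sum of right-normed brackets of length $m \ge n \ge N$, to which the hypothesis applies. The deductions of (b) and (c) also match the paper.
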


\begin{proof}
(a) Let $x \in V$ and let $N$ be a positive integer satisfying \eqref{1ov9bn928bf90jehop}. Let $n\ge N$.
To prove (a), it suffices to show that
\begin{equation}  \label {1-Avn3eo9vwej092X8eiUY}
[H_n, \dots, H_1](x) = 0 \quad \text{for all $(H_1,\dots,H_n) \in \tilde\Delta^n$}.
\end{equation}
Consider the set
$ \Delta_{\square} = \setspec{ [F_k, \dots, F_1] }{ k \ge 1 \text{ and }  (F_1, \dots, F_k) \in \Delta^k } $
and note that $\Delta \subseteq \Delta_{\square} \subseteq \tilde\Delta$ and (by Lemma \ref{0vh349rf920}(b)) $\tilde\Delta = \Span_\bk( \Delta_\square )$.
The first step in the proof of \eqref{1-Avn3eo9vwej092X8eiUY} consists in proving
\begin{equation}  \label {2-Avn3eo9vwej092X8eiUY}
[G_n, \dots, G_1](x) = 0 \quad \text{for all $(G_1,\dots,G_n) \in (\Delta_{\square})^n$}.
\end{equation}
Let $(G_1,\dots,G_n) \in (\Delta_{\square})^n$.
For each $i \in \{1, \dots, n\}$, write 
$$
G_i = [F_{i,k_i}, \dots, F_{i,2}, F_{i,1}]
$$
where $k_i\ge1$ and $F_{i,j} \in \Delta$.
Then Lemma \ref{0vh349rf920}(a) implies that $[G_n, \dots, G_1] = \sum_i \lambda_i [W_{i,k}, \dots, W_{i,1}]$ where $k = k_1+\cdots+k_n \ge n$
and (for each $i$) $\lambda_i \in \bk$ and $(W_{i,1}, \dots, W_{i,k})$ is a permutation of $(F_{1,1}, \dots, F_{1,k_1}, F_{2,1}, \dots, F_{n,k_n}) \in \Delta^k$. 
Since $k\ge n \ge N$,  \eqref{1ov9bn928bf90jehop} implies that $[W_{i,k}, \dots, W_{i,1}](x)=0$ for all $i$,  
so $[G_n, \dots, G_1](x) = 0$, proving \eqref{2-Avn3eo9vwej092X8eiUY}.

Now let us prove \eqref{1-Avn3eo9vwej092X8eiUY}.  Let $(H_1,\dots,H_n) \in \tilde\Delta^n$.
For each $i$ we have $H_i = \sum_j \lambda_{i,j} G_{i,j}$ with 
$\lambda_{i,j} \in \bk$ and $G_{i,j} \in \Delta_\square$.
Thus $[H_n, \dots, H_1]$ is a linear combination of terms of the form $[G_{n,j_n}, \dots, G_{1,j_1}]$.
By \eqref{2-Avn3eo9vwej092X8eiUY}, $[G_{n,j_n}, \dots, G_{1,j_1}](x)=0$ for every choice of $(j_1,\dots,j_n)$,
so $[H_n, \dots, H_1](x)=0$.
This proves \eqref{1-Avn3eo9vwej092X8eiUY}, so (a) is proved. Assertions (b) and (c) follow from (a).
\qed\end{proof}

\begin{theorem}  \label {Lie-cp09vv3b49fooOi1wqk498fb}
Let $(A,\cdot)$ be an algebra over a field $\bk$. 
If $\Delta$ is a subset of $\Der_\bk(A)$ then 
the sets $\Nil^\LL(\Delta)$ and $\UNil^\LL(\Delta)$ are subalgebras of $(A,\cdot)$.
\end{theorem}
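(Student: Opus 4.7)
The key observation is that $\Der_\bk(A)$ is a Lie subalgebra of $\Leul_\bk(A)_\LL$ (Notation \ref{p0vQ398vf83ms43rt7uxcy9w47F}(c)), so any iterated Lie bracket $[F_n,\dots,F_1]$ with the $F_i$ taken from $\Delta\subseteq\Der_\bk(A)$ is itself a $\bk$-derivation of $A$, and therefore satisfies the Leibniz rule. Since $\Nil^\LL(\Delta)$ and $\UNil^\LL(\Delta)$ are already known to be linear subspaces of $A$, it suffices to check closure under multiplication; in both cases this reduces to a single application of the Leibniz rule, once a common ``witness'' $N$ has been chosen for the two factors.

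For the uniform version, suppose $x,y\in\UNil^\LL(\Delta)$ with respective witnesses $N_x,N_y$. Set $N=\max(N_x,N_y)$. For any $n\ge N$ and $(F_1,\dots,F_n)\in\Delta^n$, the map $D=[F_n,\dots,F_1]$ is a derivation of $A$ with $D(x)=0=D(y)$, whence $D(x\cdot y)=D(x)\cdot y+x\cdot D(y)=0$. So $x\cdot y\in\UNil^\LL(\Delta)$.

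For $\Nil^\LL(\Delta)$, I first record a trivial monotonicity: if $N$ is a witness for $x$ with respect to a sequence $(F_0,F_1,\dots)\in\Delta^\Nat$, then every $N'\ge N$ is also a witness. Indeed, any bracket $[G_m,\dots,G_1,F_{N'},\dots,F_0]$ with $G_i\in\Delta$ can be rewritten as $[H_{m+N'-N},\dots,H_1,F_N,\dots,F_0]$, where $(H_1,\dots,H_{m+N'-N})=(F_{N+1},\dots,F_{N'},G_1,\dots,G_m)\in\Delta^{m+N'-N}$, which is annihilated at $x$ by the defining property at $N$. Now given $x,y\in\Nil^\LL(\Delta)$ and a sequence $(F_0,F_1,\dots)\in\Delta^\Nat$, pick witnesses $N_x,N_y$ and set $N=\max(N_x,N_y)$; by the observation just made, $N$ witnesses the defining property for both $x$ and $y$ simultaneously. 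Then for every $m\ge 0$ and $(G_1,\dots,G_m)\in\Delta^m$, the map $D=[G_m,\dots,G_1,F_N,\dots,F_0]$ is a derivation of $A$ annihilating $x$ and $y$, so $D(x\cdot y)=0$ by Leibniz. Hence $x\cdot y\in\Nil^\LL(\Delta)$.

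No serious obstacle is expected. The only mildly delicate point is the monotonicity step in the $\Nil^\LL$ case, needed in order to select a single $N$ that serves simultaneously for $x$ and $y$ relative to a prescribed sequence; once this is in hand, everything collapses to the fact that an iterated bracket of derivations is a derivation and annihilates products whenever it annihilates both factors.
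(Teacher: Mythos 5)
Your argument is correct and is essentially the paper's own proof in element-wise form: the paper writes $\UNil^\LL(\Delta)$ and $\Nil^\LL(\Delta)$ as (intersections of) increasing unions of sets $K_N$, each $K_N$ being an intersection of kernels of iterated brackets (which are derivations, hence yield subalgebras), and the increasing-chain property of the $K_N$ is precisely your monotonicity observation. Both arguments reduce to the same two facts — an iterated bracket of derivations is a derivation, and a common witness $N$ can be chosen for the two factors — so there is nothing further to add.
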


\begin{proof}
Consider a sequence $\mathbf{D} = (D_0,D_1,\dots) \in \Delta^\Nat$.
For each $N \in \Nat$, let $K_{N}(\mathbf{D})$ be the intersection of the kernels of the derivations
$[G_m, \dots, G_1, D_N, \dots, D_0]$, for all $m\ge0$ and $(G_1, \dots, G_m) \in \Delta^m$.
Since $K_{0}(\mathbf{D}) \subseteq K_{1}(\mathbf{D}) \subseteq K_{2}(\mathbf{D}) \subseteq \cdots$ are subalgebras of $A$,
so is $K(\mathbf{D}) = \bigcup_{N=0}^\infty K_{N}(\mathbf{D})$.
Then $\Nil^\LL(\Delta) = \bigcap_{\mathbf{D} \in \Delta^\Nat} K( \mathbf{D} )$  is a subalgebra of $A$.

For each $N \in \Nat$, let $K_N$ be the intersection of the $\ker[D_n, \dots, D_1]$ for all $n\ge N$ and  $(D_1,\dots,D_n) \in \Delta^n$.
Then $K_0 \subseteq K_1 \subseteq K_2 \subseteq \cdots$ is a chain of subalgebras of $A$, so 
$\UNil^\LL(\Delta) = \bigcup_{N=0}^\infty K_N$ is a subalgebra of $A$.
\qed\end{proof}

\section{Nilpotency conditions for algebras}
\label {SEC:Someapplicationstoalgebras}

Throughout paragraphs \ref{Qpc0P9ivb29340edqows}--\ref{ovp9e83bih39w0edpckwm},
$(A,\cdot)$ is an algebra over a field $\bk$ in the sense of \ref{0bg93mm5k6gnndrtuiecj8}(a).
Starting in \ref{covp32094efdvnpwWzZ}, we restrict our attention to the special case where $A$ is either an associative or a Lie algebra.

\begin{Definition} \label {Qpc0P9ivb29340edqows}
Given a subset $H$ of $A$,
let us define the subsets $\Nil'(H)$ and $\UNil'(H)$ of $A$ as follows.
If $H=\emptyset$, we set  $\Nil'(H) = A = \UNil'(H)$. If $H \neq \emptyset$,
\begin{itemize}

\item $\Nil'(H)$ is the set of all $x \in A$ satisfying:
\begin{quote}
for every sequence $(a_0,a_1,\dots) \in H^\Nat$ there exists $n \in \Nat$ such that $a_n \cdots a_0 \cdot x =0$;\footnote{Recall
that $a_n \cdots a_0 \cdot x = 0$ means $a_n \cdot ( a_{n-1} \cdots ( a_1 \cdot (a_0 \cdot x)) \dots)=0$,
by the right-associativity convention (\ref{p0vQ398vf83ms43rt7uxcy9w47F}).}
\end{quote}

\item $\UNil'(H)$ is the set of all $x \in A$ satisfying:
\begin{quote}
there exists $n \in \Nat$ such that for every $(a_1,\dots,a_n) \in H^n$ we have $a_n \cdots a_1 \cdot x =0$.
\end{quote}

\end{itemize}
Define $\deg'_H : A \to \Nat \cup \{ \pm\infty\}$ by declaring that
$\deg'_H(0) = -\infty$,
$\deg'_H(x) = \infty$ for all $x \in A \setminus \UNil'(H)$, and if $x \in \UNil'(H) \setminus \{0\}$ then
$\deg'_H(x)$ is
$$
\max\setspec{ n \in \Nat }{ \text{there exists $(a_1, \dots, a_n) \in H^n$ satisfying $a_n \cdots a_1 \cdot x \neq 0$} } \; .
$$
If $H = \emptyset$ then we set $\deg'_H(0) = -\infty$ and $\deg'_H(x) = 0$ for all $x \in A \setminus \{0\}$.
\end{Definition}

Let us now connect these notions to the formalism of Section \ref{SEC:Locallynilpotentactions}.

\begin{nothing*}  \label {c0b78h6mrhfwnvbrfvWkdtr8230}
Define the map $\phi : A \to \Leul_\bk(A)$ by stipulating that, given $a \in A$, $\phi(a) : A \to A$ is the  $\bk$-linear map $x \mapsto a \cdot x$.
Note that $\phi$ is a $\bk$-linear map, but not necessarily a homomorphism of algebras.
If $H$ is a subset of $A$ then $\phi(H)$ is a subset of $\Leul_\bk(A)$, so we may consider (as in Sec.~\ref{SEC:Locallynilpotentactions}) the subsets
$\Nil( \phi(H) )$ and $\UNil( \phi(H) )$ of $A$ and the map $\deg_{\phi(H)} : A \to \Nat\cup\{ \pm\infty \}$.
Then we have
\begin{equation}  \label {c0v93h48r8g0q0wk}
\text{$\Nil'(H) = \Nil( \phi(H) )$, $\UNil'( H ) = \UNil( \phi(H) )$ and $\deg'_H = \deg_{\phi(H)}$.}
\end{equation}
Indeed, the right-associativity convention implies that $a_n \cdots a_0 \cdot x = \big( \phi(a_n) \circ \cdots \circ \phi(a_0) \big)(x)$
for all $x,a_0,\dots,a_n \in A$, and \eqref{c0v93h48r8g0q0wk} immediately follows.
\end{nothing*}

\begin{corollary} \label {pv9wge98epq0djzwe5r}
If $H$ is a finite subset of $A$ then $\Nil'(H) = \UNil'( H )$.
\end{corollary}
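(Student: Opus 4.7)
The plan is to reduce this directly to the corresponding fact for subsets of $\Leul_\bk(A)$, namely Lemma \ref{0cccno12q9wdnp}, by invoking the translation established in \ref{c0b78h6mrhfwnvbrfvWkdtr8230}.

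First I would observe that the case $H = \emptyset$ is trivial, since by definition $\Nil'(\emptyset) = A = \UNil'(\emptyset)$. So assume $H \neq \emptyset$. Then consider the $\bk$-linear map $\phi : A \to \Leul_\bk(A)$ defined in \ref{c0b78h6mrhfwnvbrfvWkdtr8230}. Since $H$ is finite, its image $\phi(H)$ is a finite subset of $\Leul_\bk(A)$. Lemma \ref{0cccno12q9wdnp} then applies to $\phi(H)$ and gives
\[
\Nil(\phi(H)) = \UNil(\phi(H)).
\]

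The identification \eqref{c0v93h48r8g0q0wk} established in \ref{c0b78h6mrhfwnvbrfvWkdtr8230} says precisely that $\Nil'(H) = \Nil(\phi(H))$ and $\UNil'(H) = \UNil(\phi(H))$. Combining these two equalities with the displayed equality above yields $\Nil'(H) = \UNil'(H)$, which is the desired conclusion.

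There is essentially no obstacle here: the work has all been done in Lemma \ref{0cccno12q9wdnp} (whose proof uses the directed-tree argument based on Thm \ref{0veohtg09ernd0}) and in paragraph \ref{c0b78h6mrhfwnvbrfvWkdtr8230} (which records that the right-associativity convention for $a_n \cdots a_0 \cdot x$ exactly matches composition of the multiplication operators $\phi(a_i)$). The corollary is therefore a one-line consequence once those two ingredients are cited.
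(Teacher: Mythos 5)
Your proof is correct and is essentially identical to the paper's: both apply Lemma \ref{0cccno12q9wdnp} to the finite set $\phi(H)$ and then translate back via the identities \eqref{c0v93h48r8g0q0wk} from \ref{c0b78h6mrhfwnvbrfvWkdtr8230}. The separate treatment of $H=\emptyset$ is harmless but unnecessary, since the definitions already handle the empty set uniformly.
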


\begin{proof}
Apply Lemma \ref{0cccno12q9wdnp} to the finite set $\phi(H)$ and use \eqref{c0v93h48r8g0q0wk}.
\qed\end{proof}

\begin{lemma} \label {ovp9e83bih39w0edpckwm}
The sets $\Nil'(A)$ and $\UNil'(A)$ are left ideals of $A$.
Moreover, for each $n \in \Nat$ the set $Z_n = \setspec{ x \in A }{ \deg'_A(x) < n }$ is a left ideal of $A$.
\end{lemma}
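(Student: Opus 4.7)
The plan is to piggy-back on Section \ref{SEC:Locallynilpotentactions} via the translation \ref{c0b78h6mrhfwnvbrfvWkdtr8230}. The key identities
$\Nil'(A) = \Nil(\phi(A))$, $\UNil'(A) = \UNil(\phi(A))$ and $\deg'_A = \deg_{\phi(A)}$
from \eqref{c0v93h48r8g0q0wk} already tell us (via Definition \ref{0f9v358f6c7we9ww5gr} and Lemma \ref{kjcoo93edg98uwb}(e)) that $\Nil'(A)$, $\UNil'(A)$, and each $Z_n$ are $\bk$-linear subspaces of $A$. So the entire content of the lemma is that these three subspaces are closed under left multiplication by elements of $A$.

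For $\Nil'(A)$, I would argue by a prepending trick. Fix $x \in \Nil'(A)$ and $a \in A$, and let $(a_0, a_1, \dots) \in A^\Nat$ be arbitrary. Apply the defining property of $x \in \Nil'(A)$ to the sequence $(a, a_0, a_1, \dots)$: there exists $N \in \Nat$ such that the product of the first $N+1$ terms applied to $x$ is zero. Because of the right-associativity convention, this product is $a_{N-1} \cdots a_0 \cdot a \cdot x = a_{N-1} \cdots a_0 \cdot (a \cdot x)$ when $N \ge 1$, and reduces to $a \cdot x = 0$ when $N=0$; either way, some prefix of the sequence $(a_0,a_1,\dots)$ annihilates $a \cdot x$, so $a \cdot x \in \Nil'(A)$.

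For the statement about $Z_n$, I would apply Lemma \ref{kjcoo93edg98uwb}(d) to the set $\phi(A) \subseteq \Leul_\bk(A)$: for any $a \in A$ and any nonzero $x \in \UNil'(A) = \UNil(\phi(A))$, we have
\[
\deg'_A(a \cdot x) = \deg_{\phi(A)}\!\bigl(\phi(a)(x)\bigr) < \deg_{\phi(A)}(x) = \deg'_A(x).
\]
Combined with part (e) of the same lemma (giving closure under addition), this shows $Z_n$ is a left ideal for every $n \in \Nat$. Finally, since $\UNil'(A) = \bigcup_{n \in \Nat} Z_n$ by Lemma \ref{kjcoo93edg98uwb}(c), $\UNil'(A)$ is a left ideal as an increasing union of left ideals.

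There is no serious obstacle; the only thing requiring care is tracking the right-associativity convention in the $\Nil'$ argument, which is what lets the prepending of $a$ on the \emph{left} of the sequence correspond to multiplication of $x$ on the \emph{left} by $a$.
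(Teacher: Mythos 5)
Your proposal is correct and follows essentially the same route as the paper: the same prepending trick handles $\Nil'(A)$, and the degree-drop argument for $Z_n$ (which you delegate to Lemma \ref{kjcoo93edg98uwb}(d) via the translation $\deg'_A = \deg_{\phi(A)}$ rather than writing out $x_n\cdots x_2\cdot x\cdot y=0$ directly, as the paper does) is the same underlying computation. The concluding step, realizing $\UNil'(A)$ as the increasing union $\bigcup_{n}Z_n$, is also exactly the paper's.
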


\begin{proof}
We have $\{0\} = Z_0 \subseteq Z_1  \subseteq Z_2 \subseteq \cdots$ and 
$\UNil'(A) = \bigcup_{n=0}^\infty Z_n$; so, to prove the Lemma, it suffices to show that $\Nil'(A)$ and all $Z_n$ are left ideals of $A$.
It is clear that $\Nil'(A)$ and $Z_n$ are $\bk$-subspaces of $A$.

Let $x \in A$ and $y \in \Nil'(A)$. To prove that $x\cdot y \in \Nil'(A)$, we have to show that for every
$(x_0,x_1,\dots) \in A^\Nat$ there exist an $n$ such that  $x_n \cdots x_0 \cdot (x \cdot y) = 0$.
Note that  $x_n \cdots x_0 \cdot (x \cdot y) = x_n \cdots x_0 \cdot x \cdot y$ by the right-associativity convention.
So the claim is true, because $(x, x_0,x_1,\dots) \in A^\Nat$ and $y \in \Nil'(A)$.
So $x\cdot y \in \Nil'(A)$, showing that $\Nil'(A)$ is a left ideal of $A$.

Note that $Z_0 = \{0\}$ is indeed a left ideal of $A$. Let $n>0$, $x \in A$ and $y \in Z_n$. 
Then $x_n \cdots x_1 \cdot y = 0$ for all $(x_1, \dots, x_n) \in A^n$.
In particular, $x_n \cdots x_2 \cdot x \cdot y = 0$  for all $(x_2, \dots, x_n) \in A^{n-1}$.
Since $x_n \cdots x_2 \cdot (x \cdot y) = x_n \cdots x_2 \cdot x \cdot y=0$, this shows that $x\cdot y \in Z_{n-1} \subseteq Z_n$.
So $Z_n$ is a left ideal of $A$.
\qed\end{proof}

Cor.\ \ref{pv9wge98epq0djzwe5r} and Lemma \ref{ovp9e83bih39w0edpckwm} are valid for all algebras $A$.
We can say more if we assume that $A$ is either associative or Lie.

\begin{observation}  \label {covp32094efdvnpwWzZ}
Let $A$ be a $\bk$-algebra and consider the map $\phi$ of \ref{c0b78h6mrhfwnvbrfvWkdtr8230}.
\begin{enumerata}

\item If $A$ is an associative algebra then $\phi : A \to \Leul_\bk(A)$ is a homomorphism of associative algebras.

\item If $A$ is a Lie algebra then $\phi: A \to \Leul_\bk(A)_\LL$ is a homomorphism of Lie algebras and
$\phi(A) \subseteq \Der_\bk(A)$.

\end{enumerata}
\end{observation}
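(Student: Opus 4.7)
The plan is to reduce both parts to facts already available in the paper. Recall that $\phi(a)$ is defined by $\phi(a)(x) = a \cdot x$, and that $\phi$ is automatically $\bk$-linear because $\cdot$ is $\bk$-bilinear. So in both (a) and (b) the only substantive thing to check is the multiplicativity identity for $\phi$.

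For (a), I would simply compute: given $a,b \in A$, for every $x \in A$ associativity yields
\[
\phi(a \cdot b)(x) = (a \cdot b) \cdot x = a \cdot (b \cdot x) = \phi(a)\bigl(\phi(b)(x)\bigr) = \bigl(\phi(a) \circ \phi(b)\bigr)(x),
\]
so $\phi(a \cdot b) = \phi(a) \circ \phi(b)$, as needed. No obstacles here; it is a one-line verification.

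For (b), the key observation is that in the Lie setting $\phi$ literally coincides with the adjoint map: for every $a \in A$ and $x \in A$ we have $\phi(a)(x) = a \cdot x = [a,x] = \ad(a)(x)$, hence $\phi = \ad$. The paper has already recorded in paragraph \ref{pa9enbr2o39efcp230wsd} that $\ad : A \to \Leul_\bk(A)_\LL$ is a homomorphism of Lie algebras and that $\ad(A) \subseteq \Der_\bk(A)$, so (b) is immediate from those two facts. If I wanted to avoid citing \ref{pa9enbr2o39efcp230wsd}, the Lie-algebra identity $\phi([a,b]) = [\phi(a),\phi(b)]$ would follow from the Jacobi identity together with antisymmetry, and the derivation property $\phi(a)(x \cdot y) = \phi(a)(x) \cdot y + x \cdot \phi(a)(y)$ is precisely the Jacobi identity rewritten using $\phi(a) = \ad(a)$. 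Since \ref{pa9enbr2o39efcp230wsd} is available, the cleanest step is to invoke it. There is no real obstacle; the only point to be careful about is stating clearly that $\phi$ agrees with $\ad$ in the Lie case, so that the two properties of $\ad$ transfer verbatim to $\phi$.
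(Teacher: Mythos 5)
Your proposal is correct and matches the paper's own proof, which likewise dismisses (a) as a direct computation and reduces (b) to the already-recorded properties of $\ad$ in \ref{pa9enbr2o39efcp230wsd} via the identification $\phi = \ad$. No issues.
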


\begin{proof}
Part (a) is clear and (b) is the same as \ref{pa9enbr2o39efcp230wsd}(i,ii) (we have $\phi = \ad$).
\qed\end{proof}

\begin{corollary}  \label {1c0vh2h03rfcw8evjq}
Let $(A,\cdot)$ be either an associative algebra or a Lie algebra, let $H$ be a subset of $A$ and let $\hat H$ be the 
subalgebra of $(A,\cdot)$ generated by $H$. 
\begin{enumerata}

\item $\Nil'(H) = \Nil'( \hat H )$,  $\UNil'( H ) = \UNil'( \hat H )$ and $\deg'_H = \deg'_{\hat H}$.

\item If $(A,\cdot)$ is a Lie algebra then $\Nil'(H)$ and $\UNil'(H)$ are Lie subalgebras of $A$ and
$\deg'_H( x \cdot y ) \le \deg'_H(x) + \deg'_H(y)$ for all $x,y \in \UNil'(H)$.

\end{enumerata}
\end{corollary}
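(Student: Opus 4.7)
The plan is to reduce both parts to results already established in Section \ref{SEC:Locallynilpotentactions}, using the identification \eqref{c0v93h48r8g0q0wk} from paragraph \ref{c0b78h6mrhfwnvbrfvWkdtr8230} together with the fact that $\phi : A \to \Leul_\bk(A)$ is a homomorphism (of associative algebras or of Lie algebras, depending on the case), as recorded in Observation \ref{covp32094efdvnpwWzZ}.

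For part (a), the key step is to identify $\phi(\hat H)$ with the appropriate subalgebra generated by $\phi(H)$ inside $\Leul_\bk(A)$. If $(A,\cdot)$ is associative, I claim $\phi(\hat H) = \overline{\phi(H)}$: the inclusion $\overline{\phi(H)} \subseteq \phi(\hat H)$ holds since $\phi$ being a homomorphism of associative algebras makes $\phi(\hat H)$ a subalgebra of $\Leul_\bk(A)$ containing $\phi(H)$; conversely, Lemma \ref{0vh349rf920}(b) expresses $\hat H$ as the $\bk$-span of nonempty products of elements of $H$, and applying $\phi$ gives $\phi(\hat H) \subseteq \overline{\phi(H)}$. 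If $(A,\cdot)$ is a Lie algebra, the analogous argument, with Lemma \ref{0vh349rf920}(b) now producing iterated brackets, yields $\phi(\hat H) = \widetilde{\phi(H)}$. Proposition \ref{d0c1i2eb9ew03j}(a,b) then supplies the equalities
\[
\Nil(\phi(\hat H)) = \Nil(\phi(H)),\quad \UNil(\phi(\hat H)) = \UNil(\phi(H)),\quad \deg_{\phi(\hat H)} = \deg_{\phi(H)},
\]
in both cases, and applying \eqref{c0v93h48r8g0q0wk} to $H$ and to $\hat H$ translates these into the three equalities asserted in (a).

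For part (b), assume $(A,\cdot)$ is a Lie algebra. Then Observation \ref{covp32094efdvnpwWzZ}(b) gives $\phi(H) \subseteq \phi(A) \subseteq \Der_\bk(A)$, so Theorem \ref{cp09vv3b49fooOi1wqk498fb} applies to the subset $\phi(H)$ of $\Der_\bk(A)$. That theorem says that $\Nil(\phi(H))$ and $\UNil(\phi(H))$ are subalgebras of $(A,\cdot)$ and that $\deg_{\phi(H)}(x\cdot y) \le \deg_{\phi(H)}(x) + \deg_{\phi(H)}(y)$ whenever $x,y \in \UNil(\phi(H))$. Translating via \eqref{c0v93h48r8g0q0wk} yields exactly the statement of (b).

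No step is really hard here; the only thing one must be careful about is the identification $\phi(\hat H) = \overline{\phi(H)}$ (resp.\ $\widetilde{\phi(H)}$) in part (a), since $\phi$ need not be injective. That identification rests solely on $\phi$ being a homomorphism of the appropriate kind, which is exactly what Observation \ref{covp32094efdvnpwWzZ} provides; once this is in hand, the corollary is essentially a transcription of Proposition \ref{d0c1i2eb9ew03j} and Theorem \ref{cp09vv3b49fooOi1wqk498fb} through $\phi$.
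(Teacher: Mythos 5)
Your proposal is correct and follows essentially the same route as the paper: both proofs reduce the statement to Proposition \ref{d0c1i2eb9ew03j} and Theorem \ref{cp09vv3b49fooOi1wqk498fb} via the identification \eqref{c0v93h48r8g0q0wk}, using Observation \ref{covp32094efdvnpwWzZ} to see that $\phi$ carries $\hat H$ onto the subalgebra of $(\Leul_\bk(A),\circ)$ (resp.\ of $\Leul_\bk(A)_\LL$) generated by $\phi(H)$. Your extra care in verifying $\phi(\hat H)=\overline{\phi(H)}$ (resp.\ $\widetilde{\phi(H)}$) via Lemma \ref{0vh349rf920}(b) just spells out a step the paper treats as immediate from $\phi$ being a homomorphism.
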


\begin{proof}
(a) Let $\bk$ be the field over which $A$ is an algebra.
Consider the algebra $( \Leul_\bk(A), \star )$, where for $F,G \in \Leul_\bk(A)$ we define
$F \star G = F \circ G$  (resp.\ $F \star G = F \circ G - G \circ F$) if we are proving the case where $A$ is associative (resp.\ $A$ is Lie).
Let $\widehat{\phi(H)}$ be the subalgebra of $( \Leul_\bk(A), \star )$ generated by $\phi(H)$.
Since $\phi : (A,\cdot) \to ( \Leul_\bk(A), \star )$  is a homomorphism of algebras by Obs.\ \ref{covp32094efdvnpwWzZ},
we have $\widehat{\phi(H)} = \phi( \hat H )$, so
$
\Nil'(H) = \Nil(\phi(H)) = \Nil( \widehat{\phi(H)} ) = \Nil(\phi( \hat H )) = \Nil'( \hat H )
$,
where the second equality is Prop.\ \ref{d0c1i2eb9ew03j} and the first and last equalities are \eqref{c0v93h48r8g0q0wk}.
It is clear that $\UNil'( H ) = \UNil'( \hat H )$ and $\deg'_H = \deg'_{\hat H}$ follow by the same argument,
so (a) is proved.

(b) If $(A,\cdot)$ is a Lie algebra then $\phi(H) \subseteq \Der_\bk(A,\cdot)$ by Obs.\ \ref{covp32094efdvnpwWzZ},
so Thm \ref{cp09vv3b49fooOi1wqk498fb} implies that 
$\Nil(\phi(H))$ and $\UNil(\phi(H))$ are Lie subalgebras of $(A,\cdot)$ and that 
$\deg_{\phi(H)}( x \cdot y ) \le \deg_{\phi(H)}(x) + \deg_{\phi(H)}(y)$ for all $x,y \in \UNil(\phi(H))$.
In view of \eqref{c0v93h48r8g0q0wk}, this is the desired conclusion.
\qed\end{proof}

\begin{notation} \label {c09v498riq0vvEobap33ytdfc}
Let $A$ be either an associative algebra or a Lie algebra. Given a subset $H$ of $A$, define
$s(H) = \sup \setspec{ \deg'_H(x) }{ x \in H } \in \Nat \cup \{ \pm\infty \}$,
where we agree that $s(\emptyset) = -\infty$.
Note that if $H \subseteq \{0\}$ then $s(H) = -\infty$, and if 
$H \nsubseteq \{0\}$ then 
$s(H) = \sup\setspec{ n \in \Nat }{ x_n \cdots x_0 \neq 0 \text{ for some $(x_0,\dots,x_n) \in H^{n+1}$} }$.
\end{notation}

\begin{lemma}  \label {093498hbdfhrjd738}
Let $A$ be either an associative algebra or a Lie algebra, let $H$ be a subset of $A$ and let $\hat H$ be the 
subalgebra of $A$ generated by $H$.
\begin{enumerata}

\item If $H \subseteq \Nil'(H)$ then $\hat H \subseteq \Nil'(\hat H)$.
\item If $H \subseteq \UNil'(H)$ then $\hat H \subseteq \UNil'(\hat H)$.
\item $s(H) = s(\hat H)$

\end{enumerata}
\end{lemma}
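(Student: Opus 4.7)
The plan is to combine two earlier results: Corollary \ref{1c0vh2h03rfcw8evjq}(a) (which gives $\Nil'(H)=\Nil'(\hat H)$, $\UNil'(H)=\UNil'(\hat H)$ and $\deg'_H=\deg'_{\hat H}$) together with Lemma \ref{0vh349rf920}(b) (which expresses $\hat H$ as the $\bk$-span of right-associated products $x_m\cdots x_0$ with entries in $H$). These two tools reduce all three parts to a single ``closure under left multiplication by one element of $H$'' step, which then propagates to $\hat H$ by induction on the length of the product and by linearity.

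For (a) I will verify the key closure property: if $h\in H$ and $y\in\Nil'(H)$, then $h\cdot y\in\Nil'(H)$. Given any test sequence $(x_0,x_1,\dots)\in H^\Nat$, I prepend $h$ to form $(h,x_0,x_1,\dots)\in H^\Nat$ and apply $y\in\Nil'(H)$ to obtain some $n$ with $x_n\cdots x_0\cdot h\cdot y=0$; by the right-associativity convention this equals $x_n\cdots x_0\cdot(h\cdot y)$, so $h\cdot y\in\Nil'(H)$. Starting from the hypothesis $H\subseteq\Nil'(H)$ and iterating this closure, every right-associated product $x_m\cdots x_0$ of elements of $H$ lies in $\Nil'(H)$; since $\Nil'(H)$ is a $\bk$-subspace of $A$, linearity together with Lemma \ref{0vh349rf920}(b) gives $\hat H\subseteq\Nil'(H)=\Nil'(\hat H)$.

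Part (b) follows the same template: if $y\in\UNil'(H)$ admits a witness $n$ (meaning $a_n\cdots a_1\cdot y=0$ for every $(a_1,\dots,a_n)\in H^n$), then for each $h\in H$ the element $h\cdot y$ admits the witness $n-1$, because $a_{n-1}\cdots a_1\cdot(h\cdot y)=a_{n-1}\cdots a_1\cdot h\cdot y$ is a length-$n$ product of elements of $H$ acting on $y$. The induction-plus-linearity step then transfers $H\subseteq\UNil'(H)$ to $\hat H\subseteq\UNil'(\hat H)$.

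For (c), the trivial direction $s(H)\le s(\hat H)$ is immediate from $H\subseteq\hat H$ and $\deg'_H=\deg'_{\hat H}$. For the reverse, the case $s(H)=\infty$ is automatic, so I may assume $s(H)<\infty$. The refinement of the prepending trick that I will need is $\deg'_H(h\cdot y)\le\deg'_H(y)-1$ for every $h\in H$ and every $y$ with $\deg'_H(y)<\infty$, by exactly the same ``prepend $h$, gain one factor'' argument. Iterating along a product gives $\deg'_H(x_m\cdots x_0)\le\deg'_H(x_0)-m\le s(H)$ for all $x_0,\dots,x_m\in H$, and the subadditivity $\deg'_H(y_1+y_2)\le\max(\deg'_H(y_1),\deg'_H(y_2))$ (an immediate consequence of the bilinearity of the multiplication) together with Lemma \ref{0vh349rf920}(b) extends the bound $\deg'_H(\hat h)\le s(H)$ to every $\hat h\in\hat H$, whence $s(\hat H)\le s(H)$.

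I do not anticipate a genuine obstacle: once the single-element closure (or the one-step decrement of $\deg'_H$) is in hand, the rest is routine. The only points requiring vigilance are the consistent use of the right-associativity convention when prepending factors and the exponent/degree bookkeeping in (b) and (c).
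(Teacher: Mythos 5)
Your proof is correct and follows essentially the same route as the paper: both arguments rest on Corollary \ref{1c0vh2h03rfcw8evjq}(a) together with the observation that $\Nil'$, $\UNil'$ and the degree sublevel sets are stable under left multiplication (the ``prepend a factor'' trick). The only difference is organizational: the paper invokes Lemma \ref{ovp9e83bih39w0edpckwm} applied to the algebra $\hat H$ and concludes via the fact that a left ideal of $\hat H$ containing $H$ must equal $\hat H$, whereas you re-derive that closure by hand for elements of $H$ and then span $\hat H$ using Lemma \ref{0vh349rf920}(b).
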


\begin{proof}
(a) Assume that $H \subseteq \Nil'(H)$.
Cor.\ \ref{1c0vh2h03rfcw8evjq} gives $\Nil'(H) = \Nil'(\hat H)$,
so $H \subseteq \Nil'(\hat H)$ and hence $H \subseteq \hat H \cap \Nil'(\hat H)$. 
Applying Lemma \ref{ovp9e83bih39w0edpckwm} {\it to the algebra $\hat H$} shows that $\hat H \cap \Nil'(\hat H)$ is a left ideal of $\hat H$
and hence a subalgebra of $\hat H$; 
since that subalgebra contains $H$, it must then contain $\hat H$, so $\hat H \subseteq\Nil'(\hat H)$.
This proves (a). We obtain a proof of (b) by replacing $\Nil'$ by $\UNil'$ everywhere.

(c) It is clear that $s(H) \le s(\hat H)$ and that equality holds whenever $s(H) \notin\Nat$. Assume that $s(H) = n \in \Nat$.
Since $\deg'_H = \deg'_{\hat H}$ by Cor.\ \ref{1c0vh2h03rfcw8evjq},
we have $\deg'_{\hat H}(x) \le n$ for all $x \in H$.
Thus $H \subseteq Z_{n+1}$, where we define $Z_j = \setspec{ x \in \hat H }{ \deg_{\hat H}'(x) < j }$.
Lemma \ref{ovp9e83bih39w0edpckwm} implies that $Z_{n+1}$ is a left ideal (and hence a subalgebra) of $\hat H$,
so $\hat H \subseteq Z_{n+1}$. It follows that $s(\hat H) \le n$, so $s(\hat H) = s(H)$.
\qed\end{proof}

\begin{corollary}   \label {p9v8e674jedrbvysauogUbu}
Let $A$ be either an associative algebra or a Lie algebra and let $H$ be a generating set for $A$.
Then the following hold.
\begin{enumerata}

\item If for every sequence $(a_0,a_1,\dots) \in H^\Nat$ there exists an $n$ such that $a_n \cdots a_0 = 0$,
then  for every sequence $(a_0,a_1,\dots) \in A^\Nat$ there exists an $n$ such that $a_n \cdots a_0 = 0$.

\item Suppose that $n$ is a positive integer such that for every $(a_1,\dots,a_n) \in H^n$ we have $a_n \cdots a_1 = 0$.
Then we have $a_n \cdots a_1 = 0$ for all  $(a_1,\dots,a_n) \in A^n$.

\end{enumerata}
\end{corollary}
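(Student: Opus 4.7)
Plan: Both parts are applications of Lemma \ref{093498hbdfhrjd738}; the idea is to recognize each hypothesis as a property of $H$ with respect to one of the operators $\Nil'$ or $s$ introduced in the previous paragraphs, and then promote that property from $H$ to $\hat H = A$.

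For part (a), I first claim that the hypothesis is equivalent to $H \subseteq \Nil'(H)$. Given $x \in H$ and any sequence $(b_0,b_1,\dots) \in H^\Nat$, I prepend $x$ to form $(x,b_0,b_1,\dots) \in H^\Nat$; applying the hypothesis to this prepended sequence yields some $n$ with $a_n \cdots a_0 = 0$, which means either $x=0$ (if $n=0$) or $b_{n-1} \cdots b_0 \cdot x = 0$ (otherwise), so $x \in \Nil'(H)$. Lemma \ref{093498hbdfhrjd738}(a) then upgrades this to $A = \hat H \subseteq \Nil'(\hat H) = \Nil'(A)$. To conclude, for any $(a_0,a_1,\dots) \in A^\Nat$, the membership $a_0 \in \Nil'(A)$ applied to the tail $(a_1,a_2,\dots) \in A^\Nat$ produces some $n$ with $a_{n+1} \cdots a_1 \cdot a_0 = 0$, which is a vanishing prefix of the original sequence.

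For part (b), I work with the function $s$. The hypothesis says that every product of $n$ elements of $H$ vanishes, which by Notation \ref{c09v498riq0vvEobap33ytdfc} is exactly the statement $s(H) \le n-2$ (since $s(H) \ge m$ means some product of $m+1$ elements of $H$ is nonzero; setting $m = n-1$ translates the hypothesis into $s(H) < n-1$). Lemma \ref{093498hbdfhrjd738}(c) gives $s(A) = s(\hat H) = s(H) \le n-2$, i.e., every product of $n$ elements of $A$ vanishes, as required.

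The argument is essentially bookkeeping. The only minor obstacle is the mismatch in indexing conventions: the corollary phrases its hypotheses as vanishing products $a_n \cdots a_0$ of consecutive terms of a sequence, while the preceding definitions split off an element $x$ from a sequence of operators acting on it, and the notation $s(H) \ge m$ counts tuples of size $m+1$ rather than $m$. The prepend-and-shift manoeuvre in (a) and a careful length count in (b) reconcile these formulations.
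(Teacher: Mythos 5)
Your proof is correct and follows essentially the same route as the paper: both parts are reduced, via the translations $H\subseteq\Nil'(H)$ and $s(H)<n-1$, to Lemma \ref{093498hbdfhrjd738}(a) and (c) applied with $\hat H=A$. You merely spell out the prepend-and-shift step and the index bookkeeping that the paper leaves implicit.
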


\begin{proof}
We have $\hat H = A$ by assumption, where $\hat H$ denotes the subalgebra of $A$ generated by $H$.
Part (a) asserts that $H \subseteq \Nil'(H) \Rightarrow \Nil'(A)=A$; since $\hat H = A$, the claim follows from Lemma \ref{093498hbdfhrjd738}(a).
Part (b) asserts that if $n$ satisfies $s(H)<n-1$, then $s(A)<n-1$;  
Lemma \ref{093498hbdfhrjd738}(c) implies that $s(H) = s(\hat H) = s(A)$, so the claim is true.
\qed\end{proof}

\begin{Definition}  \label {c09ecxalxdsemeyOoxjIzkevrv}
Let $(A,\cdot)$ be either an associative algebra or a Lie algebra. 
Consider the map $\phi : A \to \Leul_\bk(A)$ of \ref{c0b78h6mrhfwnvbrfvWkdtr8230}.
We say that 
\begin{itemize}

\item $A$ is \textit{nilpotent} (N) 
if there exists an $n \ge 1$ such that for all $(x_1,\dots,x_n) \in A^{n}$ we have  $x_n \cdots x_1 = 0$;

\item $A$ is \textit{sequentially nilpotent} (SN) if for every infinite sequence $(x_0,x_1,\dots)$ of elements of $A$,
there exists an $n\ge0$ such that $x_n \cdots x_0 = 0$;

\item $A$ is \textit{locally nilpotent} (LN) if every finitely generated subalgebra of $A$ is nilpotent;

\item $A$ is \textit{nil} (nil) if for each $x \in A$ the map $\phi(x) : A \to A$ is nilpotent;

\item $A$ is \textit{locally nil} (Lnil) if for each $x \in A$ the map $\phi(x) : A \to A$ is locally nilpotent.

\end{itemize}
\end{Definition}

\begin{Remark}  
We found nothing in the literature about (SN) or (Lnil); as far as we know, these two conditions have not been considered previously.
The definitions of (N), (LN) and (nil) given in Def.\ \ref{c09ecxalxdsemeyOoxjIzkevrv} are compatible with standard usage of terminology.
Regarding (nil), note the following.
\begin{enumerate}[{label={\normalfont(\arabic*)}}]

\item Prop.\ \ref{GHIEco9vn359r0b3ru6eww}(d) shows that an associative algebra $A$
satisfies {\rm(nil)} if and only if every element of $A$ is nilpotent (which is the standard meaning of ``nil'' for associative algebras).

\item Our definition of ``nil'' for Lie algebras is the standard one.
Note that a Lie algebra that is nil is also said to be \textit{Engel}.
However, an associative algebra $A$ is said to be  \textit{Engel} if the Lie algebra $A_\LL$ is Engel.
So ``Engel'' and ``nil'' have the same meaning for Lie algebras but not for associative algebras.

\end{enumerate}
\end{Remark}

\begin{Remark}  \label {cv9349v20yuj7eeje49rfvh}
It follows from Lemma \ref{0vh349rf920} that if an associative or Lie algebra is both finitely generated and nilpotent,
then it is finite dimensional (as a vector space over $\bk$). 
Consequently, if $A$ is (LN) then  every finitely generated subalgebra of $A$ is nilpotent {\it and finite dimensional}.
\end{Remark}

\begin{proposition} \label {GHIEco9vn359r0b3ru6eww}
Let $A$ be either an associative algebra or a Lie algebra.
\begin{enumerata}

\item We have the following implications for $A$:\quad \raisebox{4.5mm}{$\xymatrix@1@C=8pt@R=0pt{
&& \text{\rm (SN)} \ar@{=>}[r] &  \text{\rm (LN)}  \ar@{=>}[dd] \\
\text{\rm (N)} \ar@{=>}[rru] \ar@{=>}[rrd] \\
&&  \text{\rm (nil)} \ar@{=>}[r] & \text{\rm (Lnil)\,.}
}$}

\smallskip

\item If $A$ is finitely generated then {\rm (N) $\Leftrightarrow$ (SN) $\Leftrightarrow$ (LN)}.

\item If $A$ is finite dimensional then {\rm (LN) $\Leftrightarrow$ (SN) $\Leftrightarrow$ (N) $\Leftrightarrow$ (nil) $\Leftrightarrow$ (Lnil)}.

\item If $A$ is associative then {\rm (SN) $\Rightarrow$ \rm (nil)} and moreover
$$
\textit{{\rm(Lnil)} $\Leftrightarrow$ {\rm(nil)} $\Leftrightarrow$ every element of $A$ is nilpotent.}
$$

\end{enumerata}
\end{proposition}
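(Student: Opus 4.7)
The proof breaks naturally into the four parts (a)--(d). The implications in (a) are mostly formal; the non-trivial work is isolated in (SN)$\Rightarrow$(LN) and then leveraged in (b)--(d). The main external inputs we will need are Corollary \ref{p9v8e674jedrbvysauogUbu}, Theorem \ref{0veohtg09ernd0}, and, for (c), two classical finite-dimensional results.

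\textbf{For (a):} The implications (N)$\Rightarrow$(SN) and (nil)$\Rightarrow$(Lnil) are immediate from the definitions. For (N)$\Rightarrow$(nil), I would use the identity $\phi(x)^k(y) = x \cdot x \cdots x \cdot y$ with $k$ factors of $x$ (right-associativity convention \ref{p0vQ398vf83ms43rt7uxcy9w47F}): if every $n$-fold product in $A$ vanishes then $\phi(x)^{n-1}=0$. For (LN)$\Rightarrow$(Lnil), given $x,y\in A$, apply (LN) to the finitely generated subalgebra generated by $\{x,y\}$ to get nilpotency there; then $\phi(x)^k(y)$ is an $(k{+}1)$-fold product in that subalgebra, hence zero for $k$ large. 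The crucial step is (SN)$\Rightarrow$(LN): given a finitely generated subalgebra $B\subseteq A$ with finite generating set $H$, form the directed tree whose vertices are the finite sequences $(a_1,\dots,a_n)\in H^n$ with $a_n\cdots a_1\ne 0$, rooted at $()$, with the evident extension edges. Each vertex has at most $|H|$ children, and (SN) (which $B$ inherits from $A$) forbids any infinite path from the root, so Theorem \ref{0veohtg09ernd0} makes the tree finite. This yields a uniform $n$ annihilating all length-$n$ products in $H$, and Corollary \ref{p9v8e674jedrbvysauogUbu}(b) transfers that uniform bound from $H$ to $B$, proving $B$ is nilpotent.

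\textbf{For (b):} In view of (a), it suffices to show (LN)$\Rightarrow$(N) when $A$ is itself finitely generated, which is trivial since $A$ is then a finitely generated subalgebra of itself.

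\textbf{For (c):} Finite dimension turns local nilpotence of a single endomorphism into nilpotence, so (Lnil) applied to each $\phi(x):A\to A$ upgrades to (nil). Combined with the chain from (a), everything collapses once we establish (nil)$\Rightarrow$(N). In the associative case this is the classical fact that a finite-dimensional nil associative algebra is nilpotent; in the Lie case this is Engel's theorem. These two classical inputs are the only non-elementary ingredient in the entire proposition and constitute the main obstacle; I would simply invoke them.

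\textbf{For (d):} Associativity gives the key identity $\phi(x)^n(y)=x^n\cdot y$, hence in particular $\phi(x)^n(x)=x^{n+1}$. If $\phi(x)$ is locally nilpotent then evaluating at $y=x$ forces $x^{n+1}=0$ for some $n$, so $x$ is a nilpotent element; conversely if $x^n=0$ then $\phi(x)^n=0$, giving the equivalence of (Lnil), (nil), and ``every element of $A$ is nilpotent''. Finally, (SN)$\Rightarrow$(nil) follows by applying (SN) to the constant sequence $(x,x,\dots)$: this produces $n$ with $x^{n+1}=0$, whence $\phi(x)^{n+1}=0$, so $\phi(x)$ is nilpotent.
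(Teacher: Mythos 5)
Your proof is correct and follows essentially the same route as the paper: identical decomposition into (a)--(d), the same reduction of (c) to (Lnil) $\Rightarrow$ (nil) $\Rightarrow$ (N) via Engel's theorem and the finite-dimensional nil-implies-nilpotent result, and the same treatment of (b) and (d). The only cosmetic difference is in (SN) $\Rightarrow$ (LN), where you inline the directed-tree argument (Thm~\ref{0veohtg09ernd0}) on sequences from $H$ and then invoke Cor.~\ref{p9v8e674jedrbvysauogUbu}(b), whereas the paper obtains the same uniform bound by citing Cor.~\ref{pv9wge98epq0djzwe5r} and the quantity $s(H)$ of Lemma~\ref{093498hbdfhrjd738} --- but those statements are themselves proved by exactly your tree argument, so the substance is the same.
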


\begin{proof}
(a), (SN) $\Rightarrow$ (LN).
Assume that $A$ satisfies (SN) and let $H$ be a finite subset of $A$; we have to show that $\hat H$ is nilpotent,
where  $\hat H$ denotes the subalgebra of $A$ generated by $H$.
The assumption that $A$ satisfies (SN) implies that $H \subseteq \Nil'(H)$.
Since $H$ is finite we have $\UNil'(H) = \Nil'(H)$ by Cor.\ \ref{pv9wge98epq0djzwe5r}, so  $H \subseteq \UNil'(H)$ and consequently
$\deg_H'(x)<\infty$ for each $x \in H$.
Since $H$ is finite, it follows that $s(H)<\infty$.
Then Lemma \ref{093498hbdfhrjd738} implies that $s(H) = s( \hat H )$, so $s( \hat H )<\infty$, so $\hat H$ is nilpotent. So $A$ satisfies (LN).

(LN) $\Rightarrow$ (Lnil).
Suppose that $A$ is (LN).
To show that $A$ is (Lnil), it suffices to show that for all $x,y \in A$ there exists an $n>0$ such that $\phi(x)^n(y)=0$.
Consider $x,y \in A$.
As $A$ is (LN), the subalgebra $A_0$ of $A$ generated by $\{x,y\}$ is nilpotent,
so there exists $n>0$ such that $x_n \cdots x_0 = 0$ for all $(x_0, \dots, x_n) \in A_0^{n+1}$,
i.e., $\big( \phi(x_n) \circ \cdots \circ \phi(x_1) \big)(x_0) = 0$ for all $(x_0, \dots, x_n) \in A_0^{n+1}$.
As $x,y \in A_0$ it follows that $\phi(x)^n(y)=0$. So  $A$ is (Lnil).

The implications (Lnil) $\Leftarrow$ (nil) $\Leftarrow$ (N) $\Rightarrow$ (SN) are clear, so (a) is proved.

\smallskip

If $A$ is finitely generated then (LN) $\Rightarrow$ (N) is clear, by definition of (LN); so (b) is clear.

\smallskip

Let us prove (d) before (c). Suppose that $A$ is associative.

If $A$ satisfies (Lnil) then, for every choice of $x,y \in A$, there exists $n$ such that $\phi(x)^n(y)=0$;
so  for every $x \in A$ there exists $n$ such that $x^{n+1}  = \phi(x)^n(x)=0$.
This shows that (Lnil) implies that  every element of $A$ is nilpotent.

Since $\phi : A \to \Leul_\bk(A)$ is a homomorphism of associative algebras, it is clear that if every element
of $A$ is nilpotent then every element of $\phi(A)$ is nilpotent, i.e., $A$ satisfies (nil).

This proves the equivalences in (d).
Now suppose that $A$ satisfies (SN). Given any $x \in A$, applying the property (SN) to the sequence $(x,x,x,\dots) \in A^\Nat$ implies
that there exists $n$ such that $x^n=0$. So (SN) implies that every element of $A$ is nilpotent, and the proof of (d) is complete.

\smallskip

(c)
By (a),  it suffices to show that (Lnil) $\Rightarrow$ (nil) $\Rightarrow$ (N) when $A$ is finite dimensional.

If $A$ is finite dimensional then every locally nilpotent linear map $A \to A$ is in fact nilpotent, so (Lnil) implies (nil).

For finite dimensional Lie algebras,
the fact that (nil) implies (N) is {\it Engel's theorem on abstract Lie algebras}, see page 36 of  \cite{JacobsonLieAlg}.

Any finite dimensional associative $\bk$-algebra $A$ is (isomorphic to) a subalgebra of a matrix algebra $M_n(\bk)$, for some $n$.
If $A$ satisfies (nil), then part (d) implies that every element of $A$ is a nilpotent matrix.
It is then well known that $A$ is a nilpotent algebra (see for instance Thm 1, page 33 of  \cite{JacobsonLieAlg}).
So (nil) implies (N) in the associative case as well, and this proves (c).
\qed\end{proof}

\begin{nothing*}
By Ex.\ \ref{928349rfer8d9}, (SN) does not imply (nil) for Lie algebras.
\end{nothing*}

\begin{nothing}  \label {covib34oepowd0ccj029}
{\rm It is known that (nil) does not imply (LN).
Indeed, the following statements are valid over an arbitrary field and are consequences of more precise results due to Golod
(\cite{Golod64}, \cite{Golod68}, cited in \cite[p.\ 6]{Kostrikin_1990_book} and \cite[Thm 6.2.9]{RowenBookVol2}): }
\begin{enumerata}

\item \label {p0c9n3o49fXvkj4tb9}
For each integer $m \ge2$, there exists an infinite dimensional associative algebra $A$ generated by $m$ elements
such that every $(m-1)$-subset of $A$ is nilpotent.\footnote{A subset $H$ of an associative algebra $A$ is \textit{nilpotent} if there 
exists $n$ such that $h_n \cdots h_1=0$ for all $(h_1, \dots, h_n) \in H^n$. By Cor.\ \ref{p9v8e674jedrbvysauogUbu}, $H$ is nilpotent if
and only if $\bar H$ is nilpotent, where $\bar H$ is the subalgebra of $A$ generated by $H$.}
Moreover, $\bigcap_{n=1}^\infty A^n = \{0\}$.

\item There exists a $3$-generated Lie algebra that satisfies \mbox{\rm(nil)} but not \mbox{\rm(N)}.

\end{enumerata}
{\rm In (a),
the fact that every $(m-1)$-subset of $A$ is nilpotent implies that $A$ is nil
and the fact that $A$ is finitely generated and infinite dimensional implies that $A$ is not nilpotent (Rem.\ \ref{cv9349v20yuj7eeje49rfvh}).  }
\end{nothing}

\begin{nothing*} 
If (P)\,$ \in \{ \text{(N)},  \text{(SN)},  \text{(LN)},  \text{(nil)},  \text{(Lnil)} \}$ and $A$ is an associative algebra satisfying (P),
does it follow that $A_\LL$ satisfies (P)?
Example \ref{PpPPppPPP09vn2n340f30viqX2} shows that there exists an associative algebra $A$ such that $A$ satisfies \text{\rm (SN)} but $A_L$ doesn't.
The following Lemma gives positive results.
\end{nothing*}

\begin{lemma}  \label {0civn23o4fbu34i0r}
Let $A$ be an associative algebra.
\begin{enumerata}

\item \it Let {\rm (P)}$ \in \{ \text{\rm (N)},  \text{\rm (nil)},  \text{\rm (LN)}, \text{\rm (Lnil)} \}$.
If $A$ satisfies {\rm (P)}, then $A_L$ satisfies~{\rm (P)}. 

\item If $A$ satisfies {\rm (SN)}, then $A_L$ satisfies~{\rm (LN)}. 

\end{enumerata}
\end{lemma}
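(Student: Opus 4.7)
The underlying observation for part (a) is that in $A_L$, where the multiplication is $a*b = ab-ba$, any right-nested $*$-product $[x_n,\dots,x_0]$ expands (by a trivial induction on $n$) as a signed sum $\sum_\sigma \epsilon_\sigma\, x_{\sigma(n)} \cdots x_{\sigma(0)}$ of associative products over permutations $\sigma$ of $\{0,\dots,n\}$. Combined with Lemma \ref{0vh349rf920}(a) applied to the Lie algebra $A_L$, which reduces an arbitrarily parenthesized $*$-product to a $\Integ$-linear combination of right-nested ones (on permuted inputs), this converts associative-nilpotency statements into Lie-nilpotency statements.

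\textbf{Case (N).} If $A$ is associative-nilpotent of degree $n$, then every associative product of $n$ elements vanishes. By the observation above, every right-nested $*$-bracket of $n$ elements of $A_L$ is a signed sum of such zero products, hence vanishes; Lemma \ref{0vh349rf920}(a) then shows every parenthesized $*$-product of $n$ elements vanishes, so $A_L$ satisfies (N). \textbf{Case (LN).} Given a finite subset $H$ of $A$, the Lie subalgebra $L$ of $A_L$ generated by $H$ is contained in the associative subalgebra $\bar H$ of $A$ generated by $H$, since $\bar H$ is automatically closed under $*$. By hypothesis $\bar H$ is nilpotent; by the (N) case applied to $\bar H$, $(\bar H)_L$ is a nilpotent Lie algebra, hence so is its subalgebra $L$. \textbf{Case (Lnil).} By Prop.\ \ref{GHIEco9vn359r0b3ru6eww}(d), for an associative algebra (Lnil) is equivalent to (nil); hence this reduces to the next case, after which Prop.\ \ref{GHIEco9vn359r0b3ru6eww}(a) gives (Lnil) for $A_L$ from (nil) for $A_L$.

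\textbf{Case (nil).} By Prop.\ \ref{GHIEco9vn359r0b3ru6eww}(d), every $x\in A$ is nilpotent; fix $x$ with $x^n=0$. Let $L_x,R_x\in \Leul_\bk(A)$ denote left and right multiplication by $x$ in $A$. Using associativity of $A$, one checks $L_x\circ R_x = R_x\circ L_x$ (both are $y\mapsto xyx$), and $L_x^n=0=R_x^n$. The map $\phi:A_L\to \Leul_\bk(A_L)$ from \ref{c0b78h6mrhfwnvbrfvWkdtr8230} satisfies $\phi(x) = L_x - R_x$, so the binomial theorem applied to commuting operators gives
$$
\phi(x)^{2n-1} = (L_x-R_x)^{2n-1} = \sum_{i=0}^{2n-1}(-1)^{2n-1-i}\binom{2n-1}{i} L_x^{\,i}\circ R_x^{\,2n-1-i} = 0,
$$
because in each term either $i\ge n$ or $2n-1-i\ge n$. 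Thus $\phi(x)$ is nilpotent for every $x\in A_L$, i.e., $A_L$ satisfies (nil).

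\textbf{Part (b).} By Prop.\ \ref{GHIEco9vn359r0b3ru6eww}(a), (SN) implies (LN) for the associative algebra $A$; then applying part (a) with (P) $=$ (LN) yields that $A_L$ satisfies (LN). \textbf{Main obstacle.} The only step that truly requires associativity (beyond the formal expansion of brackets) is the (nil) case, where the commutativity of $L_x$ and $R_x$ is essential — without it the binomial expansion argument collapses. Everything else is a direct transfer via the $*$-to-$\cdot$ expansion and the already established implications among the five nilpotency conditions.
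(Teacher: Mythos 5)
Your proposal is correct and follows essentially the same route as the paper: the (N) case via expanding right-nested brackets into associative products of the same length, the (LN) case by applying the (N) case to the associative subalgebra $\bar H$ and passing to the Lie subalgebra it contains, the nil/Lnil cases via the identity $\ad(x)^{2m-1}=0$ when $x^m=0$ (the paper phrases your commuting-operators binomial expansion as $\ad(x)^{2m-1}(y)\in\Span_\bk\setspec{x^iyx^j}{i+j=2m-1}$), and part (b) by composing (SN) $\Rightarrow$ (LN) with part (a). The only cosmetic differences are that you prove the (nil) case directly and deduce (Lnil) from it, whereas the paper starts from (Lnil) and shows $A_\LL$ is (nil), and that your appeal to Lemma \ref{0vh349rf920}(a) in the (N) case is unnecessary since the right-associativity convention already makes (N) a statement about right-nested products only.
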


\begin{proof}
(a) (i)
Define $A_n = \Span_\bk\setspec{ a_n \cdots a_1 }{ (a_1, \dots, a_n) \in A^n }$.
If $A$ satisfies (N) then $A_n=0$ for $n$ large enough;
since $[a_n, \dots, a_1] \in A_n$ for all $(a_1, \dots, a_n) \in A^n$, it follows that $A_L$ satisfies (N).

(ii) Suppose that $A$ satisfies (Lnil). Then Prop.\ \ref{GHIEco9vn359r0b3ru6eww}(d) implies that each element of $A$ is nilpotent.
We claim that $A_L$ satisfies (nil).
Indeed, let $x \in A_L$; to prove the assertion, we have to show that $\ad(x) : A_L \to A_L$ is nilpotent.
There exists $m$ such that $x^m=0$ in $A$.
For any $y \in A_L$,  
$$
\big( \ad(x)^{2m-1} \big)(y) = [\underbrace{x, \dots, x}_{2m-1}, y] \in \Span_\bk\setspec{ x^iyx^j }{ i+j=2m-1} .
$$
As $x^i y x^j=0$ for all $i,j$ satisfying $i+j = 2m-1$, we have $\big( \ad(x)^{2m-1} \big)(y) = 0$ for all $y \in A_L$.
So $\ad(x)^{2m-1} = 0$ and consequently $A_L$ satisfies (nil).

(iii) Suppose that $A$ satisfies (LN).
Consider a finite subset $H$ of $A_L$; to show that $A_L$ is (LN),
it suffices to show that $\tilde H$ is nilpotent, where $\tilde H$ denotes the subalgebra of $A_L$ generated by $H$.
Note that $H \subseteq A$ (since $A=A_L$ as sets) and consider the subalgebra $\bar H$ of $A$ generated by $H$.
Then $\bar{H}$ satisfies (N), so part (i) of the proof shows that $(\bar H)_L$ satisfies (N). 
Since $\tilde H \subseteq \bar H$, $\tilde H$ is a subalgebra of the nilpotent Lie algebra $(\bar H)_L$; so $\tilde H$ is nilpotent,
which shows that $A_L$ satisfies (LN).  This proves (a).

(b) Since $A$ satisfies (SN), it satisfies (LN) by Prop.\ \ref{GHIEco9vn359r0b3ru6eww},
so $A_L$ satisfies (LN) by part (a).
\qed\end{proof}

\begin{Example}  \label {PpPPppPPP09vn2n340f30viqX2}
Let $V$ be a vector space of dimension $|\Nat|$ over a field $\bk$.
Then there exists an associative subalgebra $A$ of $\Leul_\bk(V)$ satisfying:
\begin{enumerata}

\item $A$ is (SN) but the Lie algebra $A_L$ is not (SN).

\item $A$ is a uniformly locally nilpotent subset of $\Leul_\bk(V)$.

\end{enumerata}
\end{Example}

\begin{proof}
Let $(e_i)_{i \in \Nat}$ be a basis of $V$,
let $I = \setspec{(i,j) \in \Nat^2}{ i \ge j }$ and, for each $(i,j) \in I$, define the linear map $T_{i,j} : V \to V$ by 
$$
T_{i,j}(e_k) = \begin{cases}
0, & \text{if $k \le i$}, \\
e_j, & \text{if $k > i$.}
\end{cases}
$$
Let $A = \Span_\bk\setspec{ T_{i,j} }{ (i,j) \in I } \subseteq \Leul_\bk(V)$.
Note that if $(i_1,j_1), (i_2,j_2) \in I$ then 
\begin{enumerata}

\item[$(\alpha)$] if  $j_1 \le i_2$ then $T_{i_2,j_2} \circ T_{i_1,j_1} = 0$;
\item[$(\beta)$] if  $j_1 > i_2$ then $(i_1,j_2) \in I$ and $T_{i_2,j_2} \circ T_{i_1,j_1} = T_{i_1,j_2} \in A$.

\end{enumerata}
So $A$ is a subalgebra of $\Leul_\bk(V)$.

Proof that $A$ is (SN).  Consider the subset $\Delta = \setspec{ T_{i,j} }{ (i,j) \in I }$ of $A$.
By Cor.\ \ref{p9v8e674jedrbvysauogUbu}(a), it suffices to show that
for every sequence $(S_0, S_1, \dots) \in \Delta^\Nat$ there exists $n$ such that $S_n S_{n-1} \cdots S_0 = 0$.
In other words, it suffices to show that 
for every sequence $(T_{u_0,v_0}, T_{u_1,v_1}, T_{u_2,v_2}, \dots) \in \Delta^\Nat$ (where $(u_\nu,v_\nu) \in I$ for all $\nu \in \Nat$)
there exists $n$ such that $T_{u_n,v_n} T_{u_{n-1},v_{n-1}} \cdots T_{u_0,v_0} = 0$.
Note that the above statement $(\alpha)$ implies that 
if $(i_1,j_1), (i_2,j_2) \in I$ are such that  $T_{i_2,j_2} \circ T_{i_1,j_1} \neq 0$, then $i_2<i_1$ and $j_2<j_1$.
Consequently, if $T_{u_n,v_n} T_{u_{n-1},v_{n-1}} \cdots T_{u_0,v_0} \neq 0$ then in particular $u_n < u_{n-1} < \cdots < u_0$.
So there must exist an $n$ such that $T_{u_n,v_n} T_{u_{n-1},v_{n-1}} \cdots T_{u_0,v_0} = 0$.

Proof that $A_L$ is not (SN). 
Consider the sequence $(S_{0}, S_{1}, S_{2}, \dots ) \in A^\Nat$, where we define $S_i = T_{i,i}$ for all $i$.
Note that $S_i S_j = 0$ whenever $i \ge j$.
This implies that $[S_1,S_0] = -S_0 S_1$,  $[S_2,S_1,S_0] = [S_2, -S_0 S_1] = S_0S_1S_2$, and by induction we find
$[S_n, \dots, S_0] = (-1)^{n} S_0 \cdots S_n$ for all $n\ge0$.
As $( S_0 \cdots S_n )(e_{n+1}) = e_0$, we have 
$S_0 S_{1} \cdots S_n \neq 0$ and hence $[S_n, \dots, S_0] \neq 0$ (for all $n\ge0$).
So $A_L$ is not (SN) and (a) is proved. 

(b) Let $k \in \Nat$; let us show that $e_k \in \UNil(\Delta)$.
Indeed, if $T_{i,j}$ is any element of $\Delta$ such that $T_{i,j}(e_k) \neq 0$, then $i<k$.
It follows that if 
$(T_{u_1,v_1}, \dots, T_{u_n,v_n}) \in \Delta^n$ satisfies 
$(T_{u_n,v_n} \circ \cdots \circ T_{u_1,v_1})(e_k) \neq 0$ then $k>u_1> \cdots > u_n\ge0$
($T_{u_n,v_n} \circ \cdots \circ T_{u_1,v_1} \neq 0$ implies $u_1> \cdots > u_n$ by the preceding paragraph),
so $n \le k$, proving that $e_k \in \UNil(\Delta)$.
It follows that $\UNil(\Delta) = V$.
As $\bar\Delta = A$, Prop.\ \ref{d0c1i2eb9ew03j} implies that $\UNil(\Delta) = \UNil(A)$, so $\UNil(A) = V$, so (b) is proved.
\qed\end{proof}

\section{Locally nilpotent sets of derivations}
\label {SEC:Localnilpotenceanduniformlocalnilpotenceforderivations}

In this section we assume that $B$ is an algebra over a field $\bk$ (in the sense of \ref{0bg93mm5k6gnndrtuiecj8}(a))
and we consider the Lie subalgebra $\Der_\bk(B)$ of $\Leul_\bk(B)_\LL$ (see \ref{p0vQ398vf83ms43rt7uxcy9w47F}).

\smallskip

A derivation $D \in \Der_\bk(B)$ is said to be  \textit{locally nilpotent} if it is locally nilpotent as a linear map $B \to B$.
We write $\lnd(B)$ for the set of locally nilpotent derivations of $B$, i.e., $\lnd(B) = \locnil(\Leul_\bk B) \cap \Der_\bk(B)$.
For any subset $\Delta$ of $\Der_\bk(B)$, the sets $\UNil(\Delta) \subseteq \Nil(\Delta)$ are subalgebras  of $B$ (by Thm \ref{cp09vv3b49fooOi1wqk498fb}).
If $\Nil(\Delta) = B$,
we say that $\Delta$ is a locally nilpotent subset of $\Der_\bk(B)$.
If $\UNil(\Delta) = B$, we say that $\Delta$ is a uniformly locally nilpotent subset of  $\Der_\bk(B)$.

This section explores Questions 1 and 2, which are stated in the Introduction.
In particular, we give a series of examples showing that both questions have negative answers
when $B$ is the commutative polynomial ring in $| \Nat |$ variables over an arbitrary field $\bk$.
The next section studies the case where $B$ is assumed to satisfy some finiteness condition.

\begin{lemma} \label {pcv0b9i349r09f}
Let $V$ be a vector space over a field $\bk$ and $B$ a commutative polynomial ring over $\bk$ satisfying $\trdeg_\bk(B) = \dim_\bk(V)$.
Then there exist an injective $\bk$-linear map $\nu : V \to B$
and an injective homomorphism of Lie algebras $\psi : \Leul_\bk(V)_\LL \to \Der_\bk(B)$ such that $\bk[ \image\nu] = B$ and 
\begin{equation}  \label {0c2i3urv98703u4t8934jgs9}
\text{for every $F \in \Leul_\bk(V)$, the diagram} \qquad
\raisebox{7mm}{\xymatrix{
B  \ar[r]^{ \psi(F) }  &  B  \\
V \ar[u]^\nu \ar[r]_F  &   V \ar[u]_\nu
}}
\qquad \text{commutes.}
\end{equation}
Moreover, the following statements are true for every subset $\Delta$ of $\Leul_\bk(V)$:
\begin{enumerata}

\item $\Delta \subseteq \locnil(\Leul_\bk V)$  if and only if $\psi(\Delta) \subseteq \lnd(B)$.

\item $\Delta$  is a locally nilpotent subset of $\Leul_\bk(V)$
if and only if $\psi(\Delta)$ is a locally nilpotent subset of $\Der_\bk(B)$.

\item $\Delta$ is a uniformly locally nilpotent subset of $\Leul_\bk(V)$
if and only if $\psi(\Delta)$ is a uniformly locally nilpotent subset of $\Der_\bk(B)$.

\end{enumerata}
\end{lemma}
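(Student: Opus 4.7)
The plan is as follows. Pick an index set $I$ such that $B = \bk[\,T_i : i \in I\,]$ is the polynomial ring on variables $(T_i)_{i \in I}$ with the $T_i$ algebraically independent over $\bk$; since $\trdeg_\bk(B) = \dim_\bk(V) = |I|$ we may do so. Choose a basis $(v_i)_{i \in I}$ of $V$, define $\nu : V \to B$ to be the unique $\bk$-linear map with $\nu(v_i) = T_i$, and define $\psi(F) \in \Der_\bk(B)$ for $F \in \Leul_\bk(V)$ to be the unique $\bk$-derivation of $B$ satisfying $\psi(F)(T_i) = \nu(F(v_i))$ for every $i \in I$ (this exists and is unique because the $T_i$ are algebraically independent generators of $B$). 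Then $\nu$ is injective and $\bk[\image\nu] = B$ are clear, and the commutativity \eqref{0c2i3urv98703u4t8934jgs9} holds on the basis $\{v_i\}$ by construction, hence on $V$ by $\bk$-linearity.

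Next I verify that $\psi$ is an injective homomorphism of Lie algebras. Injectivity of $\psi$ is immediate: $\psi(F)=0$ forces $\nu(F(v_i))=0$ for all $i$, and the injectivity of $\nu$ then gives $F = 0$. For the Lie bracket, both $\psi([F,G])$ and $[\psi(F),\psi(G)]$ are $\bk$-derivations of $B$, so it suffices to check that they agree on each generator $T_i = \nu(v_i)$; a direct computation using \eqref{0c2i3urv98703u4t8934jgs9} twice reduces both sides to $\nu([F,G](v_i))$.

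For parts (a)--(c) the ``only if'' directions are the substantive ones, and the key tool is Thm \ref{cp09vv3b49fooOi1wqk498fb}: for any subset $\Delta' \subseteq \Der_\bk(B)$, both $\Nil(\Delta')$ and $\UNil(\Delta')$ are subalgebras of $B$, and by Rem.\ \ref{0f9345eo78u7884r} they contain $\bk$. Using \eqref{0c2i3urv98703u4t8934jgs9} and the injectivity of $\nu$, one gets immediately that if $\Delta \subseteq \Leul_\bk(V)$ is locally nilpotent (resp.\ uniformly locally nilpotent) then $\nu(V) \subseteq \Nil(\psi(\Delta))$ (resp.\ $\nu(V) \subseteq \UNil(\psi(\Delta))$), since a composition $\psi(F_n)\circ\cdots\circ\psi(F_0)$ applied to $\nu(v)$ equals $\nu\big((F_n\circ\cdots\circ F_0)(v)\big)$. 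Because $\Nil(\psi(\Delta))$ is a $\bk$-subalgebra of $B$ containing $\nu(V)\cup\bk$ and because $\bk[\image\nu] = B$, one concludes $\Nil(\psi(\Delta)) = B$; the same argument with $\UNil$ proves (c). The converses of (b) and (c) are trivial: applying the defining property of $\Nil$ or $\UNil$ of $\psi(\Delta)$ to the element $\nu(v)$ and using injectivity of $\nu$ recovers the required property of $\Delta$. Finally (a) is reduced to (b) by noting that for a single $F$, the singleton $\{F\}$ is locally nilpotent in $\Leul_\bk(V)$ iff $F \in \locnil(\Leul_\bk V)$, and similarly for $\psi(F)$.

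The only place where genuine care is required is checking that the inclusion $\nu(V) \subseteq \Nil(\psi(\Delta))$ propagates to all of $B$; this is where Thm \ref{cp09vv3b49fooOi1wqk498fb} does all the real work, replacing the classical Leibniz-rule induction one would otherwise have to set up to show that the local nilpotence of a derivation on generators implies local nilpotence on the whole polynomial ring. Once that theorem is in hand, the lemma is essentially a bookkeeping exercise.
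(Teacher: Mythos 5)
Your proposal is correct and follows essentially the same route as the paper: define $\psi(F)$ as the unique derivation extending $F$ from the span of the variables, verify linearity, injectivity and the bracket identity by checking on generators, and then use Thm \ref{cp09vv3b49fooOi1wqk498fb} together with $\bk[\image\nu]=B$ to propagate $\nu(V)\subseteq\Nil(\psi(\Delta))$ (resp.\ $\UNil$) to all of $B$, with the converses and part (a) handled exactly as in the paper.
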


\begin{proof}
Let $(x_i)_{i \in I}$ be a family of indeterminates over $\bk$ such that $B = \bk[ (x_i)_{i \in I} ]$. 
Then there exists a basis $(e_i)_{i \in I}$ of $V$ indexed by the same set $I$. 
Consider the $\bk$-linear map $\nu : V \to B$ given by $\nu(e_i) = x_i$ for all $i \in I$.
Then $\nu$ is injective and $\nu(V)=B_1$, where we define $B_1 = \Span_\bk\setspec{ x_i }{ i \in I }$.
We might as well assume that $V=B_1$ and that $\nu$ is the inclusion map $B_1 \hookrightarrow B$.
For each $F \in \Leul_\bk(B_1)$, there exists a unique $D_F \in \Der_\bk(B)$ satisfying $D_F(v) = F(v)$ for all $v \in B_1$.
Consider the map $\psi : \Leul_\bk( B_1 ) \to \Der_\bk(B)$, $F \mapsto D_F$.
If $a,b \in \bk$ and $F,G \in \Leul_\bk( B_1 )$ then
the derivations $D_{aF + bG}$ and $aD_F + bD_G$ have the same restriction to $B_1$,
and hence must be equal; so $\psi$ is a $\bk$-linear map (and is clearly injective).
Similarly, the derivations $D_{F \circ G-G \circ F}$ and $D_F \circ D_G - D_G \circ D_F$ have the same restriction to $B_1$,
and hence must be equal.
Thus $\psi$ is a homomorphism of Lie algebras $\Leul_\bk( B_1 )_\LL \to \Der_\bk(B)$ and \eqref{0c2i3urv98703u4t8934jgs9} is true.
To prove (b) and (c), consider a subset $\Delta$ of $\Leul_\bk(V)$.
Since $\Nil( \Delta )$ and $\UNil( \Delta )$ are linear subspaces of $V$ and $(x_i)_{i \in I}$ spans $V$, we have:
$$
\Nil( \Delta ) = V \Leftrightarrow  \, \forall_i\ x_i \in \Nil(\Delta)
\quad \text{and} \quad
\UNil( \Delta ) = V \Leftrightarrow  \, \forall_i\ x_i \in  \UNil(\Delta) .
$$
Since $\Nil( \psi(\Delta) )$ and $\UNil( \psi(\Delta) )$ are
subalgebras of $B = \bk[ (x_i)_{i \in I} ]$ by Thm \ref{cp09vv3b49fooOi1wqk498fb},
and contain $\bk$  by Rem.\ \ref{0f9345eo78u7884r}, we have:
\begin{align*}
\Nil( \psi(\Delta) ) & = B \Leftrightarrow  \, \forall_i\ x_i \in  \Nil(\psi(\Delta)) \\
\UNil( \psi(\Delta) ) & = B \Leftrightarrow  \, \forall_i\ x_i \in  \UNil(\psi(\Delta)) .
\end{align*}
It is easily verified that for each $i \in I$ we have
$$
x_i \in  \Nil(\Delta) \Leftrightarrow x_i \in  \Nil( \psi(\Delta) )
\quad \text{and} \quad
x_i \in  \UNil(\Delta) \Leftrightarrow x_i \in  \UNil( \psi(\Delta) ) .
$$
Consequently, we have 
$\Nil( \Delta ) = V \Leftrightarrow  \Nil( \psi(\Delta) ) = B$ and $\UNil( \Delta ) = V \Leftrightarrow  \UNil( \psi(\Delta) ) = B$,
i.e., assertions (b) and (c) are true. Assertion (a) follows from (b).
Indeed, the condition 
$\Delta \subseteq \locnil(\Leul_\bk V)$ is equivalent to ``for each $F \in \Delta$, $\{F\}$ is a locally nilpotent subset of $\Leul_\bk(V)$;''
by (b), this is equivalent to 
``for each $F \in \Delta$, $\{\psi(F)\}$ is a locally nilpotent subset of $\Der_\bk(B)$,''
which is itself equivalent to $\psi(\Delta) \subseteq \lnd(B)$.
\qed\end{proof}

In view of Question 2, it is natural to ask whether an arbitrary Lie algebra $L$ can be embedded as a 
Lie subalgebra of $\Der_\bk(B)$ (for some $B$) in such a way that $L \subseteq \lnd(B)$.
A preliminary question is whether $L$ can be embedded in $\Der_\bk(B)$ at all (for some $B$).
The answer is affirmative:

\begin{proposition}
Let $L$ be a Lie algebra over a field $\bk$.
Then there exists a commutative polynomial ring $B$ over $\bk$ such that $L$ is isomorphic to a Lie-subalgebra of $\Der_\bk(B)$.
Moreover, if $L$ is finite dimensional then we can choose $B$ to be of finite transcendence degree over $\bk$.
\end{proposition}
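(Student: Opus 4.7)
The plan is to reduce the problem to finding a faithful linear representation of $L$ on a vector space $V$, and then use Lemma \ref{pcv0b9i349r09f} to transport that representation into derivations of a polynomial ring of the appropriate transcendence degree. Concretely, suppose we have an injective homomorphism of Lie algebras $\rho : L \hookrightarrow \Leul_\bk(V)_\LL$ for some $\bk$-vector space $V$. Choose a commutative polynomial ring $B$ over $\bk$ with $\trdeg_\bk(B) = \dim_\bk(V)$; Lemma \ref{pcv0b9i349r09f} then supplies an injective homomorphism of Lie algebras $\psi : \Leul_\bk(V)_\LL \hookrightarrow \Der_\bk(B)$, and the composition $\psi \circ \rho : L \hookrightarrow \Der_\bk(B)$ is the desired embedding.

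The remaining task is to produce the faithful representation $\rho$. For the general (possibly infinite dimensional) case, I would use the universal enveloping algebra $U(L)$: the canonical map $\iota : L \to U(L)$ is an injective Lie algebra homomorphism (with $U(L)$ equipped with the commutator bracket), by the Poincar\'e--Birkhoff--Witt theorem. The left regular representation $\lambda : U(L) \to \Leul_\bk(U(L))$, $u \mapsto (v \mapsto uv)$, is an associative algebra homomorphism, hence a Lie algebra homomorphism into $\Leul_\bk(U(L))_\LL$; and $\lambda$ is injective because $U(L)$ has a unit. Setting $V = U(L)$ and $\rho = \lambda \circ \iota$ gives the faithful representation, and the first half of the statement follows.

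For the finite dimensional case I would need a \emph{finite dimensional} faithful representation $\rho$, so that $V$ can be taken of finite dimension and hence $B$ of finite transcendence degree over $\bk$. This is exactly the content of Ado's theorem (in characteristic zero) and Iwasawa's theorem (in positive characteristic), which together assert that every finite dimensional Lie algebra over a field admits a faithful finite dimensional representation. Citing this classical result, we pick a finite dimensional $V$ and a faithful $\rho : L \hookrightarrow \Leul_\bk(V)_\LL$, and then apply the reduction above with $B$ a polynomial ring in $\dim_\bk(V)$ variables.

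The main obstacle is really the finite dimensional addendum: the adjoint representation is not faithful in general (its kernel is the center), so one cannot circumvent Ado/Iwasawa by any completely elementary device. Everything else is bookkeeping: the infinite dimensional case is handled by PBW plus the left regular representation, and the passage from a faithful linear representation to derivations of a polynomial ring is precisely Lemma \ref{pcv0b9i349r09f}.
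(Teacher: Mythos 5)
Your proposal is correct and follows essentially the same route as the paper: find a faithful representation $\rho : L \hookrightarrow \Leul_\bk(V)_\LL$ (citing Ado/Iwasawa in the finite dimensional case) and then compose with the embedding $\psi : \Leul_\bk(V)_\LL \hookrightarrow \Der_\bk(B)$ supplied by Lemma \ref{pcv0b9i349r09f} for a polynomial ring $B$ with $\trdeg_\bk(B) = \dim_\bk(V)$. The only difference is that you spell out the PBW/left-regular-representation argument for the general case, whereas the paper simply cites the literature for the existence of a faithful representation; both are fine.
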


\begin{proof}
It is known\footnote{See for instance p.\ 6 of \cite{JacobsonLieAlg}.}
that there exists a vector space $V$ over $\bk$ such that $L$ is isomorphic to a subalgebra of $\Leul_\bk(V)_\LL$.
It is also known that if $\dim L < \infty$ then $V$ can be chosen to be finite dimensional (if $\Char\bk=0$ this is called {\it Ado's Theorem} \cite{Ado};
the general case is due to Iwasawa \cite{Iwasawa1948}).
Consider a polynomial ring $B$ over $\bk$ such that $\trdeg_\bk(B) = \dim_\bk(V)$.
By Lemma \ref{pcv0b9i349r09f}, $\Leul_\bk( V )_\LL$ is isomorphic to a Lie subalgebra of $\Der_\bk(B)$.
As $L$ is isomorphic to a Lie subalgebra of $\Leul_\bk( V )_\LL$, we are done.
\qed\end{proof}


The following is a preliminary step to Ex.\ \ref{2ndPart-Zf24obo87vWLkhkjDTfo499476df}.

\begin{Example}  \label {Zf24obo87vWLkhkjDTfo499476df}
If $\bk$ is a field and $V$ is a $\bk$-vector space of dimension $| \Nat |$,
then there exists an associative subalgebra $A$ of $\Leul_\bk(V)$ satisfying:
\begin{enumerata}

\item $A$ is the free associative algebra on a countably infinite set;

\item $A$ is a uniformly locally nilpotent subset of $\Leul_\bk(V)$.

\end{enumerata}
\end{Example}

\begin{proof}
Let $\bk$ be a field, $\Veul = \{ x_1, x_2, x_3, \dots \}$ a countably infinite set of indeterminates
and $\bk\langle \Veul \rangle$ the polynomial ring over $\bk$ in the \textbf{noncommutative} variables $x_1, x_2, x_3, \dots$.
By a {\it nonempty monomial}, we mean a nonempty finite product of elements of $\Veul$,
i.e., a product  $x_{i_1} \cdots x_{i_n}$ with $n\ge1$.
Let $W \subset \bk\langle \Veul \rangle$ be the set of nonempty monomials and
consider the associative algebra $\Ascr = \Span_\bk W$.
Note that $\Ascr$ is the associative subalgebra $\bar \Veul$ of $\bk\langle \Veul \rangle$ generated by $\Veul$;
so $\Ascr$ is the free associative algebra on $\Veul$.

We say that a nonempty monomial $x_{i_1} \cdots x_{i_n} \in W$ is \textit{admissible} if it satisfies $n>i_n$.
Let $W_0$ be the set of all admissible monomials and consider the $\bk$-subspace $\Ascr_0 = \Span_\bk(W_0)$ of $\Ascr$.
Then $V = \Ascr/\Ascr_0$ is a vector space over $\bk$ of dimension $| \Nat |$.
In fact $\Ascr_0$ is a left ideal of $\Ascr$, so $V$ is also a left $\Ascr$-module;
so if $a \in \Ascr$ and $x \in V$ then $ax \in V$ is defined, and if also $b \in \Ascr$ then $(ab)x = a(bx)$.
If $a \in \Ascr$, let $\mu(a) : V \to V$ be the $\bk$-linear map $x \mapsto ax$ (for $x \in V$).
Then $\mu : \Ascr \to \Leul_\bk( V )$ is a homomorphism of associative $\bk$-algebras.

We claim that $\mu$ is injective.
To see this, 
consider $a \in \Ascr \setminus \{0\}$. 
Write $a = \lambda_1 w_1 + \cdots + \lambda_n w_n$ where $w_1,\dots,w_n$ are distinct elements of $W$, $n\ge1$ and $\lambda_1, \dots, \lambda_n \in \bk^*$.
We have $w_1 = x_{i_1} \cdots x_{i_m}$; then the monomial $w_1 x_{m+1} \in W$ is not admissible.
Since  $a x_{m+1} = \lambda_1 w_1 x_{m+1} + \cdots + \lambda_n w_n x_{m+1}$ where $w_1x_{m+1}$, \dots, $w_nx_{m+1}$ are distinct elements of $W$
and $w_1 x_{m+1} \notin W_0$, $a x_{m+1} \notin \Ascr_0$, so $\big(\mu(a)\big)(x) = ax \neq 0$ where $x = x_{m+1} + \Ascr_0 \in \Ascr/\Ascr_0$,
showing that $\mu(a) \neq 0$.  So $\mu$ is injective.
Define $A = \mu(\Ascr)$, then $A$ is an associative subalgebra of $\Leul_\bk(V)$ and satisfies (a).

We claim that $A$ is a uniformly locally nilpotent subset of $\Leul_\bk(V)$ (meaning $\UNil(A) = V$).
Since $\setspec{ w + \Ascr_0 }{ w \in W }$ is a spanning set for the vector space $V$, 
in order to prove the claim it suffices to show that $w + \Ascr_0 \in \UNil( A )$ for each $w \in W$.
So consider $w \in W$ and write $w = x_{i_1} \cdots x_{i_m}$.
Then for every $(w_1,w_2, \dots, w_{i_m}) \in W^{i_m}$ we have $w_{i_m} \cdots w_2 \cdot w_1 \cdot w \in W_0$
(because $w_1, \dots, w_{i_m}$ are \textit{nonempty} monomials);
it follows that  $a_{i_m} \cdots a_1 \cdot w \in \Ascr_0$ for all $(a_1,a_2, \dots, a_{i_m}) \in \Ascr^{i_m}$,
which implies that  $(F_{i_m} \circ \cdots\circ  F_1)(w+\Ascr_0) = 0$ for all $(F_1, \dots, F_{i_m}) \in A^{i_m}$.
Thus $w + \Ascr_0 \in \UNil(A)$ and consequently  $\UNil(A) = V$.
\qed\end{proof}


The next example is interesting because (a) says that $L$ is as non-nilpotent as a Lie algebra can be, while (b) says 
that it is as locally nilpotent as a subset of $\Der_\bk(B)$ can be. 
So this answers Question 2 in the negative.

\begin{Example}   \label {2ndPart-Zf24obo87vWLkhkjDTfo499476df} 
If $\bk$ is a field and $B$ is the commutative polynomial algebra in $| \Nat |$ variables over $\bk$, 
then there exists a Lie subalgebra $L$ of $\Der_\bk(B)$ satisfying:
\begin{enumerata}

\item $L$ is the free Lie algebra on a countably infinite set;

\item $L$ is a uniformly locally nilpotent subset of $\Der_\bk(B)$.

\end{enumerata}
\end{Example}

\begin{proof}
Write $B = \bk[x_1, x_2, x_3, \dots]$ and let $V = \Span_\bk\{ x_1, x_2, x_3, \dots \} \subseteq B$.
By Ex.\ \ref{Zf24obo87vWLkhkjDTfo499476df},  there exists an associative subalgebra $A$ of $\Leul_\bk(V)$ satisfying:\\[1mm]
$\bullet$\ \  $A$ is the free associative algebra on a countably infinite set;\\
$\bullet$\ \  $A$ is a uniformly locally nilpotent subset of $\Leul_\bk(V)$.\\[1mm]
Consider  a countably infinite subset $S$ of $A$ such that $A$ is the free associative algebra on $S$.
Let $\bar S$ (resp.\  $\tilde S$) be the associative subalgebra of $\Leul_\bk(V)$ (resp. the Lie subalgebra of $\Leul_\bk(V)_\LL$) generated by $S$,
then $\tilde S \subseteq \bar S = A$  and $\tilde S$ is the free  Lie algebra on $S$.
Consider the injective homomorphism of Lie algebras $\psi : \Leul_\bk(V)_\LL \to \Der_\bk(B)$ of \ref{pcv0b9i349r09f}
and define $L = \psi( \tilde S )$. 
Then $L$ is the free Lie algebra on a countably infinite set, and is  a Lie subalgebra of $\Der_\bk(B)$.
We have $\UNil(\tilde S) = \UNil(\bar S) = \UNil(A) = V$, so $\tilde S$ is  a uniformly locally nilpotent subset of $\Leul_\bk(V)$;
then part (c) of  \ref{pcv0b9i349r09f} implies that $L$ is a uniformly locally nilpotent subset of $\Der_\bk(B)$.
\qed\end{proof}

The following gives a negative answer to Question 1.

\begin{Example}   \label {298ehr0c9vvb349t9pwqnfvw9e8} 
If $B$ is the commutative polynomial algebra in $| \Nat |$ variables over a field $\bk$
then there exists a Lie subalgebra $L$ of $\Der_\bk(B)$ satisfying:
\begin{enumerata}

\item $\Nil(L) \neq B$ and if $\Char\bk=0$ then $\Nil(L) = \bk$;

\item every finitely generated subalgebra $L_0$ of $L$ satisfies $\UNil( L_0 ) = B$;

\item the Lie algebra $L$ satisfies (LN).
\end{enumerata}
Note that (b) implies that $L \subseteq \lnd( B )$.
\end{Example}

\begin{proof}
We use the notation $B = \bk[x_0, x_1, x_2, \dots]$.
For each $n \in \Nat$,  define $D_n \in \Der_\bk(B)$ by
$$
D_n( x_i ) =
\begin{cases}
x_{i+1}  &  \text{if $i\le n$}, \\
0 &  \text{if $i > n$.}
\end{cases}
$$
Let $L$ be the Lie subalgebra of $\Der_\bk(B)$ generated by $\{ D_0, D_1, D_2, \dots \}$.
Let $n \in \Nat$; then $x_n \notin \Nil(L)$ because the sequence $(D_n, D_{n+1}, \dots) \in L^\Nat$ 
is such that $(D_N \circ \cdots \circ D_{n+1} \circ D_n)(x_n) = x_{N+1} \neq 0$ for all $N\ge n$.
In particular, $\Nil(L) \neq B$.

Assume that $\Char\bk=0$. To prove $\Nil(L)=\bk$, it suffices to show that 
\begin{equation}  \label {Cc0vh2938hc02C0eudpdfj}
\text{for each $f \in B \setminus \bk$ there exists $n \in \Nat$ such that $D_n(f) \in B \setminus \bk$.}
\end{equation}
Let $f \in B \setminus \bk$. There is a unique $n \in \Nat$ such that $f \in \bk[x_n, x_{n+1}, \dots] \setminus \bk[x_{n+1}, x_{n+2}, \dots]$.
Let $R =  \bk[x_{n+1}, x_{n+2}, \dots]$; then $f = P(x_n)$ for some $P(T) \in R[T] \setminus R$. 
Since $\Char R = 0$ and $R \subseteq \ker(D_n)$, we have $D_n(f) = P'(x_n) D_n(x_n) = x_{n+1}P'(x_n) \in B \setminus\bk$.
This proves \eqref{Cc0vh2938hc02C0eudpdfj}, so $\Nil(L)=\bk$ and (a) is proved.

We now drop the assumption on $\Char\bk$ (so $\bk$ is an arbitrary field until the end of the proof).
Let $L_0$ be a finitely generated Lie subalgebra of $L$.
If we choose $n$ large enough and define $\Delta = \{ D_0, D_1, \dots, D_n \}$
then $L_0 \subseteq \tilde \Delta$, where $\tilde \Delta$ is the Lie subalgebra of $\Der_\bk(B)$ generated by $\Delta$.
Let us fix $n$ with this property.
Note that $\UNil(L_0) \supseteq \UNil(\tilde\Delta)$.
So, to prove (b) and (c), it suffices to show that
\begin{enumerata}
\item[(b$'$)] $\UNil(\tilde\Delta) = B$ \qquad (c$'$) $\tilde\Delta$ is a nilpotent Lie algebra.
\end{enumerata}

Given $\delta \in \Delta = \{ D_0, D_1, \dots, D_n \}$, we have $\delta(x_i) \in \{0,x_{i+1}\}$ for all $i \in \Nat$
and $\delta(x_i)=0$ for all $i>n$.
It follows that
$(\delta_{n+2} \circ \cdots \circ \delta_1)(x_j) = 0$ for all $(\delta_1, \delta_2, \dots, \delta_{n+2}) \in \Delta^{n+2}$ and all $j \in \Nat$, 
i.e., $\delta_{n+2} \circ \cdots \circ \delta_1 = 0$ for all $(\delta_1, \delta_2, \dots, \delta_{n+2}) \in \Delta^{n+2}$.
This certainly implies that $\UNil(\Delta)=B$ (hence  $\UNil(\tilde\Delta) = B$ by Prop.\ \ref{d0c1i2eb9ew03j}),
and it also implies (by Cor.\ \ref{p9v8e674jedrbvysauogUbu}) that $\bar\Delta$ satisfies (N),
where $\bar\Delta$ is the associative subalgebra of $\Leul_\bk(B)$ generated by $\Delta$.
By Lemma \ref{0civn23o4fbu34i0r}, we obtain that $(\bar \Delta)_\LL$ satisfies (N), and since $\tilde\Delta$ is a subalgebra of 
$(\bar \Delta)_\LL$ it follows that $\tilde\Delta$ satisfies (N). So (b$'$) and (c$'$) are proved and we are done.
\qed\end{proof}

In view of the above Example, one may ask whether Question 1 always has an affirmative answer when $L$ is finitely generated.
The answer is negative:

\begin{Example}  \label {cp0b3497m5yokejdrjmk6yf}
Let $B$ be the commutative polynomial algebra in $| \Nat |$ variables over a field $\bk$.
Then for each integer $m\ge2$ there exists an $m$-generated Lie subalgebra $L$ of $\Der_\bk(B)$ satisfying:
\begin{enumerata}

\item $L$ is not a locally nilpotent subset of $\Der_\bk(B)$.

\item Every $(m-1)$-generated Lie subalgebra of $L$ is a locally nilpotent subset of $\Der_\bk(B)$ and a nilpotent Lie algebra.

\item $L \subseteq \lnd(B)$ and $L$ is a nil Lie algebra.

\end{enumerata}
\end{Example}

\begin{proof}
Let $m \ge 2$.  By the result of Golod stated in \ref{covib34oepowd0ccj029}\ref{p0c9n3o49fXvkj4tb9},
there exists an $m$-generated associative $\bk$-algebra $A$ such that $A$ is not nilpotent
but every $(m-1)$-subset of $A$ is nilpotent (so $A$ is nil).
Consider the map $\phi : A \to \Leul_\bk(A)$ of \ref{c0b78h6mrhfwnvbrfvWkdtr8230}.
We noted in \ref{covp32094efdvnpwWzZ} that $\phi$ is a homomorphism of associative algebras. 
Since the $\bk$-vector space $A$ has dimension $| \Nat |$, we have $\dim_\bk(A) = \trdeg_\bk(B)$;
so we may consider an injective $\bk$-linear map $\nu : A \to B$
and an injective homomorphism of Lie algebras $\psi : \Leul_\bk(A)_\LL \to \Der_\bk(B)$ satisfying the conditions of Lemma \ref{pcv0b9i349r09f} (with $V=A$);
in particular, $B=\bk[ \image\nu ]$ and the diagram in part (i) of \eqref{983gh5vbbmBvnd2736t4udhi} commutes for every $F \in \Leul_\bk(A)$.
Let $H$ be a generating set of $A$ with $| H | = m$,
let $\Delta = \phi(H) \subset \Leul_\bk(A)$,
let $\Lambda = \psi(\Delta) \subset \Der_\bk(B)$ and let $L$ be the Lie subalgebra of $\Der_\bk(B)$ generated by $\Lambda$.
Let us prove that $L$ satisfies the desired conditions.
\begin{equation}  \label {983gh5vbbmBvnd2736t4udhi}
\mbox{(i)} \quad \xymatrix{
B  \ar[r]^{ \psi(F) }  &  B  \\
A \ar[u]^\nu \ar[r]^F  &   A \ar[u]_\nu
}
\qquad
\mbox{(ii)} \quad \xymatrix@R=20pt{
A  \ar[r]^-{\phi}   &    \Leul_\bk(A) = \Leul_\bk(A)_\LL   \ar[r]^-{\psi} & \Der_\bk(B)    \\
A \ar @{=}[u] \ar[r]^-{\text{surjective}}  &   \bar{\Delta} \supseteq \tilde{\Delta} \ar@{^{(}->}[u] \ar[r]^-{\isom} &  L \ar@{^{(}->}[u] 
}
\end{equation}

Let $\bar\Delta$ (resp.\ $\tilde\Delta$) be the subalgebra of $\Leul_\bk(A)$ (resp.\ of $\Leul_\bk(A)_\LL$) generated by $\Delta$.
Since $A$ is generated by $H$ and $\phi$ is a homomorphism of associative algebras, $\phi(A) = \bar \Delta$.
Since $\psi$ is an injective homomorphism of Lie algebras, it restricts to an isomorphism $\tilde\Delta \to L$.
See part (ii) of \eqref{983gh5vbbmBvnd2736t4udhi}.

Since $A$ is finitely generated but not nilpotent, Prop.\ \ref{GHIEco9vn359r0b3ru6eww}(b) implies that $A$ is not (SN).
So there exists an infinite sequence $(a,a_0,a_1,a_2,\dots) \in A^\Nat$ such that $a_n \cdots a_0 \cdot a \neq 0$ for all $n \in \Nat$.
If we define  $F_i = \phi(a_i)$ for all $i\in\Nat$ then $(F_0,F_1,\dots) \in (\bar\Delta)^\Nat$ satisfies $(F_n \circ \cdots \circ F_0)(a) \neq 0$ for all $n$,
so $a \notin \Nil( \bar \Delta ) = \Nil( \Delta )$ (cf.\ Prop.\ \ref{d0c1i2eb9ew03j}) and hence $\Delta$ is not a locally nilpotent subset of $\Leul_\bk(A)$.
By  Lemma \ref{pcv0b9i349r09f}(a), $\Lambda = \psi(\Delta)$ is not a locally nilpotent subset of $\Der_\bk(B)$. In particular, (a) is true.

To prove (b), consider an $(m-1)$-subset $\Lambda'$ of $L$ and let $L'$ be the subalgebra of $L$ generated by $\Lambda'$.
There exists an $(m-1)$-subset $H'$ of $A$ such that $\psi(\phi(H')) = \Lambda'$.
By our choice of $A$, $H'$ is a nilpotent subset of $A$;
it follows that $\Delta' = \phi(H')$ is a nilpotent subset of $\bar \Delta$, i.e.,
there exists $n>0$ such that
\begin{equation} \label {pc0v34kjsos7e}
\text{$F_n \circ \cdots \circ F_1 = 0$ for all $(F_1, \dots, F_n) \in (\Delta')^n$.}
\end{equation}
In particular, $\Delta'$ is a locally nilpotent subset of $\Leul_\bk(A)$;
by Lemma \ref{pcv0b9i349r09f}(b), $\Lambda' = \psi(\Delta')$ is a locally nilpotent subset of $\Der_\bk(B)$,
so Prop.\ \ref{d0c1i2eb9ew03j} implies that $L'$ is  a locally nilpotent subset of $\Der_\bk(B)$.
On the other hand, \eqref{pc0v34kjsos7e}
and Cor.\ \ref{p9v8e674jedrbvysauogUbu} imply that the associative algebra $\overline{\Delta'}$ is nilpotent.
By Lemma \ref{0civn23o4fbu34i0r}, it follows that the Lie algebra  $(\overline{\Delta'})_\LL$ is nilpotent.
Since $\widetilde{\Delta'}$ is a subalgebra of  $(\overline{\Delta'})_\LL$, $\widetilde{\Delta'}$  is nilpotent.
Since $\psi$ maps  $\widetilde{\Delta'}$ onto $L'$, we obtain that $L'$ is nilpotent.
This proves (b).

We have $L \subseteq \lnd(B)$ by (b).
Since $\bar\Delta$ is a homomorphic image of the nil algebra $A$, $\bar\Delta$ is nil (as an associative algebra).
So the Lie algebra $(\bar\Delta)_\LL$ is nil, by Lemma \ref{0civn23o4fbu34i0r}.
As $\tilde\Delta$ is a subalgebra of $(\bar\Delta)_\LL$, it is a nil Lie algebra.
Since $L \isom \tilde\Delta$, $L$ is nil. So (c) is proved.
\qed\end{proof}

\begin{corollary} \label {p0c9vn349Cf0cpe0a}
Let $B$ be the commutative polynomial algebra in $| \Nat |$ variables over a field $\bk$.
\begin{enumerata}

\item There exists an infinite subset $\Delta$ of $\Der_\bk(B)$ satisfying:
\begin{itemize}

\item $\Delta$ is not a locally nilpotent subset of $\Der_\bk(B)$;

\item every finite subset of $\Delta$ is a locally nilpotent subset of $\Der_\bk(B)$.

\end{itemize}

\item For each integer $m\ge2$, there exists an $m$-subset $\Delta$ of $\Der_\bk(B)$ satisfying:
\begin{itemize}

\item $\Delta$ is not a locally nilpotent subset of $\Der_\bk(B)$;

\item every proper subset of $\Delta$ is a locally nilpotent subset of $\Der_\bk(B)$.

\end{itemize}

\end{enumerata}
\end{corollary}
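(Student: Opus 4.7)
The plan is to deduce the corollary directly from the two Lie-algebra examples already proved, namely Ex.~\ref{298ehr0c9vvb349t9pwqnfvw9e8} for part (a) and Ex.~\ref{cp0b3497m5yokejdrjmk6yf} for part (b). The bridge between a subset $\Delta$ and the Lie subalgebra $\tilde{\Delta}$ it generates is Prop.~\ref{d0c1i2eb9ew03j}(a), which gives $\Nil(\Delta) = \Nil(\tilde{\Delta})$; so $\Delta$ is a locally nilpotent subset of $\Der_\bk(B)$ if and only if $\tilde{\Delta}$ is. This single equivalence is the only engine we need.

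For part (b), fix $m \ge 2$ and let $L$ be an $m$-generated Lie subalgebra of $\Der_\bk(B)$ as furnished by Ex.~\ref{cp0b3497m5yokejdrjmk6yf}. Choose any $m$-element set $\Delta$ of Lie generators of $L$, so $\tilde{\Delta}=L$; I claim this $\Delta$ works. Since $L$ is not a locally nilpotent subset of $\Der_\bk(B)$, neither is $\Delta$, by the equivalence above. If $\Delta' \subsetneq \Delta$ is a proper subset, then $|\Delta'| \le m-1$, so $\tilde{\Delta'}$ is an $(m-1)$-generated (or smaller) Lie subalgebra of $L$; by condition (b) of Ex.~\ref{cp0b3497m5yokejdrjmk6yf}, $\tilde{\Delta'}$ is a locally nilpotent subset of $\Der_\bk(B)$, hence so is $\Delta'$.

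For part (a), let $L$ be the Lie subalgebra of $\Der_\bk(B)$ provided by Ex.~\ref{298ehr0c9vvb349t9pwqnfvw9e8}, so that $\Nil(L) \neq B$ while every finitely generated Lie subalgebra $L_0$ of $L$ satisfies $\Nil(L_0) = B$. I first note that $L$ itself cannot be finitely generated: otherwise we could take $L_0 = L$ and derive $\Nil(L) = B$, contradicting (a) of that example. Hence any set $\Delta \subseteq L$ of Lie generators of $L$ is infinite; fix such a $\Delta$. For any finite $\Delta' \subseteq \Delta$, the algebra $\tilde{\Delta'}$ is a finitely generated Lie subalgebra of $L$, so $\Nil(\tilde{\Delta'}) = B$, and Prop.~\ref{d0c1i2eb9ew03j}(a) yields $\Nil(\Delta') = B$. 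On the other hand $\Nil(\Delta) = \Nil(\tilde{\Delta}) = \Nil(L) \neq B$, so $\Delta$ itself is not locally nilpotent.

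There is no genuine obstacle in this argument: the corollary is essentially a repackaging of the two examples from the Lie-algebra setting into the setting of arbitrary subsets, and the repackaging is done entirely by the identity $\Nil(\Delta) = \Nil(\tilde{\Delta})$. The only thing to be mildly careful about is the observation that the $L$ of Ex.~\ref{298ehr0c9vvb349t9pwqnfvw9e8} is necessarily not finitely generated, which guarantees that the witness $\Delta$ in part (a) can indeed be chosen infinite.
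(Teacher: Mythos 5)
Your proof is correct and takes essentially the same route as the paper: both parts are deduced from Examples \ref{298ehr0c9vvb349t9pwqnfvw9e8} and \ref{cp0b3497m5yokejdrjmk6yf} by means of the equivalence $\Nil(\Delta)=\Nil(\tilde\Delta)$ supplied by Prop.~\ref{d0c1i2eb9ew03j}(a). Your explicit observation that the $L$ of Ex.~\ref{298ehr0c9vvb349t9pwqnfvw9e8} cannot be finitely generated (so that a generating set $\Delta$ is necessarily infinite) is a detail the paper leaves implicit, and it is indeed needed for part (a).
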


\begin{proof}
The following is an obvious consequence of Prop.\ \ref{d0c1i2eb9ew03j}:
\begin{enumerate}

\item[$(*)$] \rm For any subset $\Delta$ of $\Der_\bk(B)$,
$\Delta$ is a locally nilpotent subset of $\Der_\bk(B)$ 
if and only if 
the Lie subalgebra of $\Der_\bk(B)$ generated by $\Delta$ is a locally nilpotent subset of $\Der_\bk(B)$.

\end{enumerate}

\noindent Proof of (a). Consider the Lie subalgebra $L$ of $\Der_\bk(B)$ given in  Ex.\ \ref{298ehr0c9vvb349t9pwqnfvw9e8}
and let $\Delta$ be any generating set of $L$.
Then, by $(*)$, $\Delta$ satisfies (a).

\noindent Proof of (b).
Let $m\ge2$. Consider the Lie subalgebra $L$ of $\Der_\bk(B)$ given by Ex.\ \ref{cp0b3497m5yokejdrjmk6yf}.
Let $\Delta$ be a generating set of $L$ with $| \Delta | = m$.
Then, by $(*)$, $\Delta$ satisfies (b).
\qed\end{proof}

\section{The case of derivation-finite algebras}
\label{SEC:thecaseofderivationfinitealgebras}

\begin{Definition} \label {c9fweCdefderfinitedc9v9ueibqovp}
Let $B$ be an algebra over a field $\bk$.
We say that $B$ is \textit{derivation-finite} if there exists a finite subset $X$ of $B$ satisfying:
$$
\text{if $D \in \Der_\bk(B)$ satisfies $D(x)=0$ for all $x \in X$, then $D=0$.}
$$
\end{Definition}

For instance,
if $B$ is a finitely generated $\bk$-algebra then it is derivation-finite.
If $\Char\bk=0$ and $B$ is a commutative integral domain of finite transcendence degree over $\bk$ then $B$ is derivation-finite.

This section re-examines Question 2 under the assumption
that $B$ is a derivation-finite algebra over a field $\bk$.
We begin with an example.

\begin{Example} \label {928349rfer8d9}
Consider the polynomial ring $B = \bk[X_1, \dots, X_n]$ where $n \ge 2$ and $\bk$ is a field of characteristic zero.
Let $L$ be the set of all $D \in \Der_\bk(B)$ satisfying $D(X_1) \in \bk$ and
$D(X_i) \in \bk[X_1, \dots, X_{i-1}]$ for all $i = 2, \dots, n$.
Note that $L$ is a Lie subalgebra of $\Der_\bk(B)$. 
We claim:
\begin{enumerata}
\item $L$ is a locally nilpotent subset of $\Der_\bk(B)$.
\item $L$ is not a uniformly locally nilpotent subset of $\Der_\bk(B)$.
\item The Lie algebra $L$ satisfies {\rm (SN)} but not {\rm (nil)}.
\end{enumerata}
\end{Example}

\begin{proof}
The proof of (a) uses the following trivial observation: {\it if $b \in B$ is such that $D(b) \in \Nil(L)$ for all $D \in L$, then $b \in \Nil(L)$.}
We have $D(X_1) \in \bk \subseteq \Nil(L)$ for all $D \in L$,
so $X_1 \in  \Nil(L)$, so $\bk[X_1] \subseteq \Nil(L)$ by Thm \ref{cp09vv3b49fooOi1wqk498fb}.
Let $i \in \{2, \dots, n\}$ be such that  $\bk[X_1, \dots, X_{i-1}] \subseteq \Nil(L)$.
Since $D(X_i) \in \bk[X_1, \dots, X_{i-1}] \subseteq \Nil(L)$ for all $D \in L$,
$X_i \in \Nil(L)$ and hence  $\bk[X_1, \dots, X_{i}] \subseteq \Nil(L)$ by Thm \ref{cp09vv3b49fooOi1wqk498fb}.
It follows by induction that $\Nil(L) = B$, so (a) is true.

(b) Given any $m>0$ let $D_m = X_1^m \, \frac{\partial}{\partial X_2}$, and let $E = \frac{\partial}{\partial X_1}$;
then $\{D_m,E\} \subset L$ and (since $\Char\bk=0$) $(E^m \circ D_m)(X_2) \neq 0$, so $X_2 \notin \UNil(L)$.

(c) Let us check that $L$ is not a nil Lie algebra.
Let $E = \frac{\partial}{\partial X_1} \in L$. We claim that $\ad(E) : L \to L$ is not nilpotent.
Indeed, let $m>0$ and let us prove that $\ad(E)^m \neq 0$.
It suffices to show that $\ad(E)^m(D_m) = [E, \dots, E, D_m]$ is not zero
(where $D_m = X_1^m \, \frac{\partial}{\partial X_2}$ and where there are $m$ ``$E$'' in the bracket);
so it suffices to show that  $[E, \dots, E, D_m](X_2) \neq 0$.
Since $E(X_2) = 0$, we have $[E, \dots, E, D_m](X_2) =  (E^m \circ D_m)(X_2) \neq 0$.
So $\ad(E)$ is not nilpotent and consequently the Lie algebra $L$ is not nil.

To show that the Lie algebra $L$ is (SN), we use the following notation.
Let $B_0=\bk$ and $B_j = \bk[X_1,\dots,X_j]$ for all $j = 1, \dots, n$.
If $j\in\{1,\dots,n\}$ and $f \in B_{j-1}$, define $D_f^j = f \frac{\partial}{\partial X_j }\in L$.
Let $H$ be the set of $D_f^j$, for all pairs $(j,f)$ such that $j\in\{1,\dots,n\}$ and $f \in B_{j-1}$ (thus $H \subseteq L$).
Consider $D_f^j, D_g^k \in H$ (where $f \in B_{j-1}$ and $g \in B_{k-1}$).
A straightforward calculation gives $[D_f^j, D_g^k] = 0$ when $j=k$, and $[D_f^j, D_g^k] = D^k_{ D_f^j(g) }$ if $j<k$.
Since $D_f^j(g) = 0$ when $j=k$, we get
\begin{equation} \label {cp0vb4jros8erghbe9}
[D_f^j, D_g^k] = D^k_{ D_f^j(g) } \quad \text{whenever $j\le k$.}
\end{equation}

We have $L = \Span_\bk(H)$, so in particular $H$ is a generating set for the Lie algebra $L$.
By Cor.\ \ref{p9v8e674jedrbvysauogUbu}(a), 
to show that $L$ is (SN) it suffices to show that for every sequence $(d_0,d_1,d_2,\dots) \in H^\Nat$
there exists an $m$ such that $[d_m, \dots, d_0]=0$.
So consider $(d_0,d_1,d_2,\dots) \in H^\Nat$.
Then  for each $i$ we have $d_i = D_{f_i}^{j_i}$, where $j_i \in \{1, \dots, n\}$ and $f_i \in \bk[X_1,\dots,X_{j_i-1}]$.
Choose $\nu \in \Nat$ such that $j_{\nu} = \max\setspec{ j_i }{ i \in \Nat }$.
Then \eqref{cp0vb4jros8erghbe9} implies that 
$$
\text{for each $m \ge \nu$, $[d_m, \dots, d_0] = D^{j_\nu}_{g_m}$ for some $g_m \in B_{j_{\nu}-1}$.}
$$
We have
$[d_{m+1},d_m, \dots, d_0] = [d_{m+1}, [d_m, \dots, d_0]] = [d_{m+1}, D^{j_\nu}_{g_m}] = [D^{j_{m+1}}_{f_{m+1}}, D^{j_\nu}_{g_m}] \\
= D^{j_\nu}_{ D^{j_{m+1}}_{f_{m+1}}( g_m ) }$,
so $g_{m+1} =  D^{j_{m+1}}_{f_{m+1}}( g_m ) = d_{m+1}(g_m)$ (for all $m\ge\nu$). Thus $g_m = (d_m \circ \cdots \circ d_{\nu+1})(g_\nu)$ for all $m > \nu$.
Since $(d_i)_{i=\nu+1}^\infty$  is an infinite sequence in $H$, and $H$ is a locally nilpotent subset of $\Der_\bk(B)$,
it follows that for $m$ large enough we have $g_m=0$ and hence $[d_m, \dots, d_0] = 0$.
This shows that the Lie algebra $L$ is (SN).
\qed\end{proof}

Until the end of the section, we assume:
$$
\textbf{\boldmath $B$ is an algebra over a field $\bk$ (cf.\ \ref{0bg93mm5k6gnndrtuiecj8}(a)) and is derivation-finite.}
$$

For each element $D$ of the Lie algebra $\Der_\bk(B)$, we may consider the map $\ad(D) : \Der_\bk(B) \to \Der_\bk(B)$, $E \mapsto [D,E]$ 
(this is the map defined in \ref{pa9enbr2o39efcp230wsd}).
Then we have the following.

\begin{lemma}  \label {0vc94xtgh6uy6aiw39ihfb}
If $D \in \lnd(B)$ then the map $\ad(D) : \Der_\bk(B) \to \Der_\bk(B)$ is locally nilpotent.
\end{lemma}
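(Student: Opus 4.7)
Fix $D \in \lnd(B)$ and $E \in \Der_\bk(B)$. I want to produce $n$ such that $\ad(D)^n(E) = 0$ in $\Der_\bk(B)$. The key tool is the classical expansion, valid in any associative algebra containing $D$ and $E$ as elements, namely
\[
\ad(D)^n(E) \;=\; \sum_{k=0}^{n} (-1)^k \binom{n}{k}\, D^{n-k} \circ E \circ D^{k},
\]
which follows by a straightforward induction on $n$ from $\ad(D)(E) = D\circ E - E\circ D$. Evaluating at $x \in B$ gives
\[
\bigl(\ad(D)^n(E)\bigr)(x) \;=\; \sum_{k=0}^{n} (-1)^k \binom{n}{k}\, D^{n-k}\!\bigl(E(D^{k}(x))\bigr).
\]

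Now I invoke the hypothesis that $B$ is derivation-finite: fix a finite subset $X \subseteq B$ such that any derivation vanishing on $X$ is zero. It then suffices to find a single $n$ with $\bigl(\ad(D)^n(E)\bigr)(x) = 0$ for every $x \in X$. For each $x \in X$, local nilpotence of $D$ gives an integer $p_x \geq 0$ with $D^{p_x}(x) = 0$; so in the above sum, every term with $k \geq p_x$ vanishes automatically. For each of the \emph{finitely many} remaining pairs $(x,k)$ with $x \in X$ and $0 \leq k < p_x$, the element $y_{x,k} := E(D^{k}(x)) \in B$ is killed by some power of $D$, say $D^{m_{x,k}}(y_{x,k}) = 0$.

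Setting $N := \max\{\, k + m_{x,k} : x \in X,\ 0 \leq k < p_x\,\}$ (a maximum of finitely many integers), any $n \geq N$ makes every term in the sum vanish for every $x \in X$: for $k \geq p_x$ because $D^{k}(x) = 0$, and for $k < p_x$ because $n - k \geq m_{x,k}$ forces $D^{n-k}(y_{x,k}) = 0$. Hence $\ad(D)^n(E)(x) = 0$ for all $x \in X$, and derivation-finiteness yields $\ad(D)^n(E) = 0$, as required.

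\textbf{Main obstacle.} There is no real obstacle here; the proof is essentially a combinatorial identity combined with the finiteness inputs. The one point requiring care is passing from ``kills a finite generating family'' to ``is the zero derivation,'' which is precisely where the derivation-finiteness assumption is used (and without which the statement would fail, as suggested by the counter-examples of Section \ref{SEC:Localnilpotenceanduniformlocalnilpotenceforderivations}).
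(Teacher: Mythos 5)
Your proof is correct and follows essentially the same route as the paper: both expand $\ad(D)^n(E)$ as a linear combination of the compositions $D^{n-k}\circ E\circ D^{k}$, then kill each term on the finite set $X$ witnessing derivation-finiteness by applying local nilpotence of $D$ twice (first to the elements of $X$, then to the finitely many elements $E(D^{k}(x))$). The only cosmetic difference is that you record the binomial coefficients and take per-pair exponents $m_{x,k}$, whereas the paper chooses uniform exponents $n$ and $m$; the argument is the same.
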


\begin{proof}
We have to show that for each $E \in \Der_\bk(B)$ there exists $N>0$ such that $\ad(D)^N(E) = 0$.  Let $E \in \Der_\bk(B)$.
Choose a nonempty finite subset $X$ of $B$ such that 
$$
\text{if $F \in \Der_\bk(B)$ satisfies $F(x)=0$ for all $x \in X$, then $F=0$.}
$$
Choose $n>0$ such that $D^n(x) = 0$ for all $x \in X$.
Since 
$$
Y = \setspec{ (E \circ D^i)(x) }{ (i,x) \in \{0,\dots,n-1\} \times X }
$$
is a finite subset of $B$, there exists $m>0$ such that $D^m(y)=0$ for all $y \in Y$;
then $(D^m \circ E \circ D^i)(x) = 0$  for all $(i,x) \in \{0,\dots,n-1\} \times X$. 
Let $F = \ad(D)^{m+n-1}(E) = [D,D,\dots, D,E]$.
Since $F \in \Der_\bk(B)$, in order to show that $F=0$ it suffices to show that $F(x)=0$ for all $x \in X$.
Let $x \in X$; then $F(x)$ is a linear combination (over $\bk$) of terms of the form 
$(D^{j} \circ E \circ D^i)(x)$ where $i,j\ge0$ and $i+j=m+n-1$.
In each one of these terms we have either $i\ge n$ or ($i<n$ and $j\ge m$), so $(D^{j} \circ E \circ D^i)(x)=0$. 
It follows that $F(x)=0$ and hence that $\ad(D)^{m+n-1}(E) = 0$.
\qed\end{proof}

The next facts give some positive answers to Question 2. 
Observe that the conclusion of Cor.\ \ref{cijvnn394npebg67872u}(a) cannot be strenghtened from (Lnil) to (nil), by Ex.\ \ref{928349rfer8d9}.

\begin{corollary} \label {cijvnn394npebg67872u}
Let $L$ be a Lie subalgebra of $\Der_\bk(B)$ such that $L \subseteq \lnd(B)$.
\begin{enumerata}

\item $L$ is locally nil \mbox{\rm(Lnil)}.

\item If $L$ is finite dimensional then it is a nilpotent Lie algebra.

\end{enumerata}
\end{corollary}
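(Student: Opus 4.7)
The plan is to deduce both statements as short corollaries of Lemma~\ref{0vc94xtgh6uy6aiw39ihfb} together with results already established in the paper, in particular Prop.~\ref{GHIEco9vn359r0b3ru6eww}. The key observation is that, with $B$ derivation-finite, Lemma~\ref{0vc94xtgh6uy6aiw39ihfb} tells us that for every $D \in L \subseteq \lnd(B)$ the map $\ad(D) : \Der_\bk(B) \to \Der_\bk(B)$ is locally nilpotent. Since $L$ is a Lie subalgebra of $\Der_\bk(B)$, the subspace $L$ is $\ad(D)$-invariant, and hence the restriction $\ad(D)|_L : L \to L$ is still locally nilpotent.

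For part (a), I would simply unwind Def.~\ref{c09ecxalxdsemeyOoxjIzkevrv}: a Lie algebra $L$ is locally nil precisely when, for each $D \in L$, the map $\phi(D) = \ad(D) : L \to L$ is locally nilpotent (using \ref{pa9enbr2o39efcp230wsd}(i) to identify $\phi$ with $\ad$ in the Lie case, as noted in Obs.~\ref{covp32094efdvnpwWzZ}(b)). By the observation above, this is exactly what Lemma~\ref{0vc94xtgh6uy6aiw39ihfb} delivers after restriction to $L$, so $L$ satisfies (Lnil).

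For part (b), assuming $\dim_\bk L < \infty$, I would invoke Prop.~\ref{GHIEco9vn359r0b3ru6eww}(c), which asserts that for a finite-dimensional associative or Lie algebra the conditions (Lnil), (nil), (N), (SN) and (LN) are all equivalent. Combining this with (a) immediately gives that $L$ is nilpotent as a Lie algebra.

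There is no real obstacle: once Lemma~\ref{0vc94xtgh6uy6aiw39ihfb} is available, both parts are formal consequences. The only point that deserves a sentence in the write-up is the invariance of $L$ under each $\ad(D)$ with $D \in L$, which ensures we may interpret the local nilpotence of $\ad(D)$ on $\Der_\bk(B)$ as local nilpotence of the endomorphism $\ad(D) : L \to L$ that appears in the definition of (Lnil).
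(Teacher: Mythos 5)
Your proof is correct and follows exactly the paper's route: part (a) is read off from Lemma~\ref{0vc94xtgh6uy6aiw39ihfb} (your remark that $L$ is $\ad(D)$-invariant, so the restriction $\ad(D)|_L$ is locally nilpotent and this is precisely condition (Lnil) via $\phi=\ad$, is the content of the paper's ``follows immediately''), and part (b) is the equivalence (Lnil)~$\Leftrightarrow$~(N) for finite-dimensional Lie algebras from Prop.~\ref{GHIEco9vn359r0b3ru6eww}(c). No gaps; your write-up just makes explicit the one-line justifications the paper leaves implicit.
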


\begin{proof}
Assertion (a) follows immediately from Lemma \ref{0vc94xtgh6uy6aiw39ihfb}.
Part (b) follows from the fact (Prop.\ \ref{GHIEco9vn359r0b3ru6eww})
that (Lnil) $\Leftrightarrow$ (N) for finite dimensional Lie algebras.
\qed\end{proof}

\begin{proposition}  \label {p90cb2473dq739e3r627}
For a finitely generated Lie subalgebra $L$ of $\Der_\bk(B)$, the following are equivalent:
\begin{enumerata}

\item $L$ is a nilpotent Lie algebra;
\item $L$ is a Lie-locally nilpotent subset of $\Der_\bk(B)$;
\item $L$ is a uniformly Lie-locally nilpotent subset of $\Der_\bk(B)$.

\end{enumerata}
Moreover, if $L$ is a locally nilpotent subset of $\Der_\bk(B)$ then \text{\rm (a--c)} are satisfied.
\end{proposition}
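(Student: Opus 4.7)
The plan is to establish the chain (a) $\Leftrightarrow$ (c) $\Rightarrow$ (b) $\Rightarrow$ (c), and handle the \emph{moreover} statement separately; the derivation-finiteness hypothesis on $B$ enters essentially only in the implication (c) $\Rightarrow$ (a), while finite generation of $L$ is used essentially only in (b) $\Rightarrow$ (c).

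First, (a) $\Rightarrow$ (c) is immediate: if $L$ is nilpotent as a Lie algebra then there exists $N$ such that $[F_n,\dots,F_1]=0$ in $L$ for all $n \ge N$ and all $F_i \in L$; as $L \subseteq \Der_\bk(B)$, this bracket is the zero derivation, so in particular $[F_n,\dots,F_1](x)=0$ for every $x \in B$, which gives $\UNil^\LL(L)=B$. The converse (c) $\Rightarrow$ (a) is the main obstacle. Pick a finite subset $X \subseteq B$ witnessing that $B$ is derivation-finite. From $\UNil^\LL(L)=B$, for each $x \in X$ there exists $N(x) \in \Nat$ such that $[F_n,\dots,F_1](x)=0$ for every $n \ge N(x)$ and every $(F_1,\dots,F_n) \in L^n$. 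Set $N = \max_{x \in X} N(x)$, which is finite since $X$ is. Then for every $n \ge N$ and every $(F_1,\dots,F_n) \in L^n$, the derivation $[F_n,\dots,F_1]$ vanishes on the entire set $X$, hence is the zero derivation by the defining property of $X$. Therefore $L$ satisfies (N), i.e., $L$ is a nilpotent Lie algebra.

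The implication (c) $\Rightarrow$ (b) is built into the definitions ($\UNil^\LL \subseteq \Nil^\LL$). For (b) $\Rightarrow$ (c) I would invoke the finite generation of $L$: choose a finite generating set $\Delta$ of $L$ as a Lie algebra. Monotonicity of $\Nil^\LL$ with respect to reverse inclusion (which is immediate from the definition) gives $\Nil^\LL(\Delta) \supseteq \Nil^\LL(L)=B$, so $\Delta$ is Lie-locally nilpotent. Since $\Delta$ is finite, Lemma \ref{Nil-0cccno12q9wdnp} yields $\UNil^\LL(\Delta)=\Nil^\LL(\Delta)=B$, so $\Delta$ is in fact uniformly Lie-locally nilpotent. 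Proposition \ref{Nil-d0c1i2eb9ew03j}(c) then transfers this property from $\Delta$ to $\tilde\Delta=L$, giving $\UNil^\LL(L)=B$, which is (c).

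For the final assertion, suppose $L$ is a locally nilpotent subset of $\Der_\bk(B)$. With $\Delta$ as above, $\Nil(\Delta) \supseteq \Nil(L)=B$, so $\Delta$ is a locally nilpotent subset of $\Leul_\bk(B)$; since $\Delta$ is finite, Lemma \ref{0cccno12q9wdnp} upgrades this to $\UNil(\Delta)=B$. Lemma \ref{p9vcuwbIhEj483u839} then gives $\UNil^\LL(\Delta) \supseteq \UNil(\Delta)=B$, so $\Delta$ is uniformly Lie-locally nilpotent, and Proposition \ref{Nil-d0c1i2eb9ew03j}(c) again propagates this to $L=\tilde\Delta$, establishing (c) and hence (a)--(c) by the equivalences proved above. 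The only genuinely nontrivial step throughout this proof is the translation of ``pointwise'' bounds $N(x)$ into a uniform bound $N$ in (c) $\Rightarrow$ (a); this is precisely where derivation-finiteness of $B$ is indispensable.
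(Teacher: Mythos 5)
Your proof is correct and follows essentially the same route as the paper: the same finite generating set $\Delta$ together with Lemmas \ref{Nil-0cccno12q9wdnp}, \ref{0cccno12q9wdnp}, \ref{p9vcuwbIhEj483u839} and Prop.\ \ref{Nil-d0c1i2eb9ew03j} for (b)\,$\Rightarrow$\,(c) and the ``moreover'' part, and the same use of a finite derivation-detecting set $X$ to turn the pointwise vanishing into $[D_n,\dots,D_1]=0$ for (c)\,$\Rightarrow$\,(a). The only differences are cosmetic (you close the cycle as (a)\,$\Leftrightarrow$\,(c)\,$\Rightarrow$\,(b)\,$\Rightarrow$\,(c) rather than (a)\,$\Rightarrow$\,(b)\,$\Rightarrow$\,(c)\,$\Rightarrow$\,(a), and you apply Lemma \ref{p9vcuwbIhEj483u839} before rather than after passing to $\tilde\Delta$).
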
 

\begin{proof}
Choose a finite subset $\Delta$ of $L$ such that $\tilde\Delta = L$.
Choose a nonempty finite subset $X$ of $B$ such that 
$$
\text{if $F \in \Der_\bk(B)$ satisfies $F(x)=0$ for all $x \in X$, then $F=0$.}
$$

It is obvious that (a) implies (b). 
If (b) is true then $\Delta$ is a finite  Lie-locally nilpotent subset of $\Der_\bk(B)$; by  Lemma \ref{Nil-0cccno12q9wdnp}, 
$\Delta$ is uniformly Lie-locally nilpotent; so $L = \tilde\Delta$ is uniformly Lie-locally nilpotent
by  Prop.\ \ref{Nil-d0c1i2eb9ew03j}, and this shows that (b) implies (c).

Assume that (c) is true.
Since $X$ is a finite set and  $L$ is uniformly Lie-locally nilpotent, there exists $N \in \Nat$ satisfying 
\begin{equation} \label {FJcob34598yt34e9k}
\text{ $[D_{n}, \dots, D_1](x)=0$\quad for all $n\ge N$, $(D_1, \dots, D_{n}) \in L^{n}$ and $x \in X$.} 
\end{equation}
In \eqref{FJcob34598yt34e9k}, the fact that $[D_{n}, \dots, D_1]$ is a derivation that annihilates each element of $X$ implies that 
$[D_{n}, \dots, D_1]=0$. So we have 
$[D_{N}, \dots, D_1]=0$ for all $(D_1, \dots, D_{N}) \in L^{N}$, i.e., $L$ is nilpotent.
So (c) implies (a) and consequently the three conditions are equivalent.

Assume that $L$ is a locally nilpotent subset of $\Der_\bk(B)$.
Then $\Delta$ is a finite locally nilpotent subset of $\Der_\bk(B)$, so 
$\Delta$ is uniformly locally nilpotent by  Lemma \ref{0cccno12q9wdnp},
so $L=\tilde\Delta$ is  uniformly locally nilpotent by Prop.\ \ref{d0c1i2eb9ew03j},
so $L$ is  uniformly Lie-locally nilpotent by Lemma \ref{p9vcuwbIhEj483u839}, so (a--c) are satisfied.
\qed\end{proof}

\bibliographystyle{alpha}

\end{document}